\numberwithin{equation}{section}
\theoremstyle{plain}
\newtheorem{theorem}[subsubsection]{Theorem}
\newtheorem{maintheorem}[subsubsection]{Main Theorem}
\newtheorem{lemma}[subsubsection]{Lemma}
\newtheorem{prop}[subsubsection]{Proposition}
\newtheorem{cor}[subsubsection]{Corollary}
\newtheorem{ques}[subsubsection]{Question}
\newtheorem{conj}[subsubsection]{Conjecture}
\theoremstyle{definition}
\newtheorem{defn}[subsubsection]{Definition}
\newtheorem{remark}[subsubsection]{Remark}
\def\AA{\mathbb{A}}
\def\BB{\mathbb{B}}
\def\CC{\mathbb{C}}
\def\DD{\mathbb{D}}
\def\FF{\mathbb{F}}
\def\GG{\mathbb{G}}
\def\PP{\mathbb{P}}
\def\QQ{\mathbb{Q}}
\def\RR{\mathbb{R}}
\def\TT{\mathbb{T}}
\def\ZZ{\mathbb{Z}}
\def\calA{\mathcal{A}}
\def\calE{\mathcal{E}}
\def\calF{\mathcal{F}}
\def\calG{\mathcal{G}}
\def\calH{\mathcal{H}}
\def\calK{\mathcal{K}}
\def\calL{\mathcal{L}}
\def\calO{\mathcal{O}}
\def\calP{\mathcal{P}}
\def\bH{\mathbf{H}}
\def\bI{\mathbf{I}}
\def\bP{\mathbf{P}}
\def\bR{\mathbf{R}}
\newcommand{\Qbar}{\overline{\QQ}}
\newcommand{\Nbar}{\overline{N}}
\newcommand{\chibar}{\overline{\chi}}
\newcommand{\kbar}{\overline{k}}
\newcommand{\Xbar}{\overline{X}}
\newcommand{\Gammabar}{\overline{\Gamma}}
\newcommand{\gammabar}{\overline{\gamma}}
\newcommand{\tilK}{\widetilde{K}}
\newcommand{\tilT}{\widetilde{T}}
\newcommand{\tilW}{\widetilde{W}}
\newcommand{\tilY}{\widetilde{Y}}
\newcommand{\tilw}{\widetilde{w}}
\newcommand{\tpi}{\widetilde{\pi}}
\newcommand{\tilA}{\widetilde{A}}
\newcommand{\tila}{\widetilde{a}}
\newcommand{\tilb}{\widetilde{b}}
\newcommand{\tTT}{\widetilde{\TT}}
\newcommand{\tZG}{\widetilde{ZG}}
\newcommand{\hatG}{\widehat{G}}
\newcommand{\hatg}{\widehat{\mathfrak{g}}}
\newcommand{\hatH}{\widehat{H}}
\newcommand{\hath}{\widehat{\mathfrak{h}}}
\newcommand{\hatT}{\widehat{T}}
\newcommand{\una}{\underline{a}}
\newcommand{\unc}{\underline{c}}
\newcommand{\unG}{\underline{G}}
\newcommand{\unK}{\underline{K}}
\newcommand{\unU}{\underline{U}}
\newcommand{\unu}{\underline{u}}
\newcommand{\frr}{\mathfrak{r}}
\newcommand{\frg}{\mathfrak{g}}
\newcommand{\frl}{\mathfrak{l}}
\newcommand{\frt}{\mathfrak{t}}
\newcommand{\frA}{\mathfrak{A}}
\newcommand{\frB}{\mathfrak{B}}
\newcommand\act{\textup{act}}
\newcommand\an{\textup{an}}
\newcommand{\bIH}{\mathbf{IH}}
\newcommand{\cont}{\textup{cont}}
\newcommand{\dom}{\textup{dom}}
\newcommand\ev{\textup{ev}}
\newcommand\even{\textup{even}}
\newcommand\Frob{\textup{Frob}}
\newcommand\Gal{\textup{Gal}}
\newcommand\geom{\textup{geom}}
\newcommand{\Gr}{\textup{Gr}}
\newcommand\id{\textup{id}}
\newcommand\Inv{\textup{Inv}}
\newcommand\inv{\textup{inv}}
\newcommand\IC{\textup{IC}}
\newcommand\Irr{\textup{Irr}}
\newcommand\Lie{\textup{Lie}}
\newcommand\Loc{\textup{Loc}}
\newcommand\Mot{\textup{Mot}}
\newcommand\Out{\textup{Out}}
\newcommand\odd{\textup{odd}}
\newcommand\opp{\textup{opp}}
\newcommand\Perv{\textup{Perv}}
\newcommand{\pr}{\textup{pr}}
\newcommand{\pt}{\textup{pt}}
\newcommand{\pur}{\textup{pur}}
\newcommand\rank{\textup{rank}}
\newcommand{\rel}{\textup{rel}}
\newcommand\Rep{\textup{Rep}}
\newcommand{\Res}{\textup{Res}}
\newcommand\rk{\textup{rk}}
\newcommand{\sgn}{\textup{sgn}}
\newcommand\Spec{\textup{Spec}}
\newcommand\Stab{\textup{Stab}}
\newcommand\Sym{\textup{Sym}}
\renewcommand{\Vec}{\textup{Vec}}
\newcommand\Aut{\textup{Aut}}
\newcommand\Hom{\textup{Hom}}
\newcommand\End{\textup{End}}
\newcommand\uHom{\underline{\Hom}}
\newcommand\GL{\textup{GL}}
\newcommand\PGL{\textup{PGL}}
\newcommand\SL{\textup{SL}}
\newcommand\SO{\textup{SO}}
\newcommand\PSO{\textup{PSO}}
\newcommand{\Gm}{\GG_m}
\newcommand{\ad}{\textup{ad}}
\newcommand{\Ad}{\textup{Ad}}
\newcommand{\der}{\textup{der}}
\newcommand\xch{\mathbb{X}^*}
\newcommand\xcoch{\mathbb{X}_*}
\newcommand{\isom}{\stackrel{\sim}{\to}}
\newcommand{\leftexp}[2]{{\vphantom{#2}}^{#1}{#2}}
\newcommand{\pD}{\leftexp{p}{D}}
\newcommand{\twtimes}[1]{\stackrel{#1}{\times}}
\newcommand{\htimes}{\widehat{\otimes}}
\newcommand{\cohog}[2]{\textup{H}^{#1}({#2})}     
\newcommand{\cohoc}[2]{\textup{H}^{#1}_{c}({#2})}
\newcommand{\jiao}[1]{\langle{#1}\rangle}
\newcommand{\wt}[1]{\widetilde{#1}}
\newcommand{\wh}[1]{\widehat{#1}}
\newcommand{\mat}[4]{\left(\begin{array}{cc}{#1} & {#2} \\{#3} & {#4}\end{array}\right)}
\newcommand{\oleft}{\overleftarrow}
\newcommand{\oright}{\overrightarrow}
\newcommand{\un}[1]{\underline{#1}}
\newcommand{\upH}{\textup{H}}
\newcommand{\Bun}{\textup{Bun}}
\newcommand{\Fl}{\textup{Fl}}
\newcommand{\GR}{\textup{GR}}
\newcommand{\fl}{f\ell}
\newcommand{\Hk}{\textup{Hk}}
\newcommand{\Sat}{\textup{Sat}}
\newcommand{\tSat}{\widetilde{\textup{Sat}}}
\newcommand{\topo}{\textup{top}}
\newcommand{\tame}{\textup{tame}}
\newcommand{\qm}{\textup{qm}}
\newcommand{\X}[1]{\PP^1_{#1}-\{0,1,\infty\}}
\newcommand{\Z}{\ZZ[1/2\ell N]}
\newcommand{\Ql}{\QQ_\ell}
\newcommand{\Qlbar}{\overline{\QQ}_\ell}
\newcommand{\GQ}{\Gal(\overline{\QQ}/\QQ)}
\newcommand{\GQp}{\Gal(\overline{\QQ}_{p}/\QQ_{p})}
\newcommand{\Gk}{\Gal(\overline{k}/k)}
\title[Motives with exceptional Galois groups]{Motives with exceptional Galois groups\\ and the inverse Galois problem}
\author{Zhiwei Yun}
\address{Department of Mathematics, MIT, 77 Massachusetts Avenue, 2-173, Cambridge, MA 02139}
\email{zyun@math.mit.edu}
\date{}
\subjclass[2000]{14D24, 11F80, 20G41}
\keywords{exceptional groups, motives}
\begin{document}

\begin{abstract}
We construct motivic $\ell$-adic representations of $\GQ$ into exceptional groups of type $E_7,E_8$ and $G_2$ whose image is Zariski dense. This answers a question of Serre.  The construction is uniform for these groups and uses the Langlands correspondence for function fields. As an application, we solve new cases of the inverse Galois problem: the finite simple groups $E_{8}(\FF_{\ell})$ are Galois groups over $\QQ$ for large enough primes $\ell$.
\end{abstract}

\maketitle
\tableofcontents
\section{Introduction}

\subsection{Serre's question}
About two decades ago, Serre raised the following question which he described as ``plus hasardeuse'' (English translation: more risky):
\begin{ques}[Serre {\cite[Section 8.8]{Serre}}]
Is there a motive $M$ (over a number field) such that its motivic Galois group is a simple group of exceptional type $G_{2}$ or $E_{8}$?
\end{ques}
The purpose of this paper is to give an affirmative answer to a variant of Serre's question for $E_7, E_8$ and $G_2$.

\subsubsection{Motivic Galois groups} Let us briefly recall the notion of the motivic Galois group, following \cite[Section 1 and 2]{Serre}. Let $k$ and $L$ be number fields. Let $\Mot_k(L)$ be the category of motives over $k$ with coefficients in $L$. This is an abelian category obtained by formally adjoining direct summands of smooth projective varieties over $k$ cut out by correspondences with $L$-coefficients. Assuming the Standard Conjectures, the category $\Mot_k(L)$ becomes a neutral $L$-linear Tannakian category: it admits a tensor structure and a fiber functor $\omega$ into $\Vec_{L}$, the tensor category of $L$-vector spaces. For example, one may take $\omega$ to be the singular cohomology of the underlying analytic spaces (fix an embedding $k\hookrightarrow\CC$) with $L$-coefficients. By Tannakian formalism \cite{DM}, such a structure gives a group scheme $G^{\Mot}_k$ over $L$ as the group of tensor automorphisms of $\omega$. This is {\em the motivic Galois group of $k$}.

Any motive $M\in\Mot_k(L)$ generates a Tannakian subcategory $\Mot(M)$ of $\Mot_k(L)$. Tannakian formalism again gives a group scheme $G^{\Mot}_M$ over $L$, the group of tensor automorphisms of $\omega|_{\Mot(M)}$. This is {\em the motivic Galois group of $M$}. 

Of course Serre's question could be asked for other exceptional types. Although people hoped for an affirmative answer to Serre's question, the difficulty in constructing such motives lies in the fact that one cannot find motives with exceptional Galois groups within abelian varieties, nor do we have Shimura varieties of type $E_8, F_4$ or $G_2$.

\subsubsection{Motivic Galois representations} Let $\ell$ be a prime number. Fix an embedding $L\hookrightarrow\Ql'$, where $\Ql'$ is a finite extension of $\Ql$.  For a motive $M\in\Mot_k(L)$, we have the {\em $\ell$-adic realization} $\upH(M,\Ql')$ which is a continuous $\Gk$-module. 

Let $V$ be a finite dimensional $\Ql$-vector space. A continuous representation $\rho:\Gk\to\GL(V)$ is called {\em motivic} if there exists a motive $M\in\Mot_{k}(\QQ)$ such that $V$ is isomorphic to $\upH(M,\Ql)$ as $\Gk$-modules.  It is called {\em  potentially motivic}, if there exists a finite extension $k'/k$, a finite extension $\Ql'/\Ql$, a number field $L\subset\Ql'$ and a motive $M\in\Mot_{k'}(L)$ such that $V\otimes_{\Ql}\Ql'$ is isomorphic to $\upH(M,\Ql')$ as $\Ql'[\Gal(\kbar/k')]$-modules. 

Let $\hatG$ be a reductive algebraic group over $\Ql$. A continuous representation $\rho:\GQ\to\hatG(\Ql)$ is called {\em motivic} (resp. {\em potentially motivic}) if for some faithful algebraic representation $V$ of $\hatG$, the composition $\GQ\xrightarrow{\rho}\hatG(\Ql)\to\GL(V)$ is motivic (resp. potentially motivic).

For a motive $M\in\Mot_k(L)$ with $\ell$-adic realization $\upH(M,\Qlbar)$, we define the {\em $\ell$-adic motivic Galois group} of $M$ to be the Zariski closure of the image of the representation $\rho_{\ell,M}:\Gk\to\GL(\upH(M,\Qlbar))$. We denote the $\ell$-adic motivic Galois group of $M$ by $G^\ell_M$, which is an algebraic group over $\Qlbar$. It is expected (see \cite[Section 3.2]{Serre}) that $G^\ell_M\cong G^{\Mot}_M\otimes_{L}\Qlbar$. 

\subsection{Main results}

We will answer Serre's question for $\ell$-adic motivic Galois groups instead of the actual motivic Galois groups (whose existence depends on the Standard Conjectures anyway).

\begin{maintheorem}
Let $\hatG$ be a split simple adjoint group of type $E_7, E_8$ or $G_2$. Let $\ell$ be a prime number. Then there exists an integer $N\geq1$ and a continuous representation
\begin{equation*}
\rho: \pi_1(\X{\Z})\to\hatG(\Ql)
\end{equation*}
such that
\begin{enumerate}
\item For each geometric point $\Spec k\to\Spec\Z$, the restriction of $\rho$ to the geometric fiber $\X{k}$:
\begin{equation*}
\rho_k:\pi_1(\X{k})\to\hatG(\Ql)
\end{equation*}
has Zariski dense image.
\item  The restriction of $\rho$ to a rational point $x:\Spec\QQ\to\X{}$:
\begin{equation*}
\rho_{x}:\GQ\to\hatG(\Ql)
\end{equation*}
is either motivic (if $\hatG$ is of type $E_8$ or $G_2$) or potentially motivic (if $G$ is of type $E_7$, in which case it becomes motivic if we restrict $\rho_x$ to $\Gal(\Qbar/\QQ(i))$ and extend $\Ql$ to $\Ql(i)$, $i=\sqrt{-1}$).
\item There exist infinitely many rational points $\{x_{1},x_{2},\cdots\}$ of $\X{\QQ}$ such that $\rho_{x_{i}}$ are mutually non-isomorphic and all have Zariski dense image.
\end{enumerate}
\end{maintheorem}

\begin{cor}
For $\hatG$ a simple adjoint group of type $E_7, E_8$ or $G_2$ over $\Qlbar$, there exist infinitely many non-isomorphic motives $M$ over $\QQ$ (if $\hatG$ is of type $E_8$ or $G_2$) or $\QQ(i)$ (if $\hatG$ is of type $E_7$) whose $\ell$-adic motivic Galois groups are isomorphic to $\hatG$. In particular, Serre's question for $\ell$-adic motivic Galois groups has an affirmative answer for $E_7,E_8$ and $G_2$.
\end{cor}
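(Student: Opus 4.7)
The plan is to deduce the corollary directly from the Main Theorem by unpacking the definition of the $\ell$-adic motivic Galois group. First, I would apply part (3) to obtain infinitely many rational points $x_1,x_2,\ldots\in\X{\QQ}$ whose associated representations $\rho_{x_i}:\GQ\to\hatG(\Ql)$ are mutually non-isomorphic and have Zariski dense image. For the $E_8$ and $G_2$ cases, part (2) then provides, for each $i$, a faithful algebraic representation $\iota:\hatG\hookrightarrow\GL(V)$ and a motive $M_i\in\Mot_{\QQ}(\QQ)$ for which $\upH(M_i,\Ql)\cong V$ as $\GQ$-modules, with the action given by $\iota\circ\rho_{x_i}$.

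Next, I would compute $G^\ell_{M_i}$. By definition, this is the Zariski closure of the image of $\GQ$ acting on $\upH(M_i,\Qlbar)$. Because $\iota_{\Qlbar}$ is a closed embedding of algebraic groups, this Zariski closure coincides with the Zariski closure of the image of $\rho_{x_i}$ taken inside $\hatG_{\Qlbar}$, which is all of $\hatG_{\Qlbar}$ by the density assertion in part (3). Hence $G^\ell_{M_i}\cong\hatG$ over $\Qlbar$, as required.

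For the $E_7$ case, part (2) produces instead a motive $M_i$ over $\QQ(i)$ with coefficients in a number field $L\subset\Ql(i)$, whose $\ell$-adic realization recovers $\rho_{x_i}$ restricted to $\Gal(\Qbar/\QQ(i))$ after extending scalars to $\Ql(i)$. The same Zariski closure argument applies, once I verify that this restricted representation still has Zariski dense image in $\hatG$; this follows because $\Gal(\Qbar/\QQ(i))$ has index $2$ in $\GQ$ and $\hatG$ is connected, so the Zariski closure of the restricted image is a finite-index closed subgroup of $\hatG$ and must equal $\hatG$ itself.

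Finally, the $M_i$ are mutually non-isomorphic as motives: any isomorphism $M_i\cong M_j$ would induce an isomorphism of the $\ell$-adic realizations as Galois modules, contradicting the non-isomorphism of $\rho_{x_i}$ and $\rho_{x_j}$ asserted in part (3). I do not anticipate any serious obstacle; the corollary is essentially a direct unpacking of definitions, and the only minor subtlety is the descent to $\QQ(i)$ together with the preservation of Zariski density under that restriction in the $E_7$ case.
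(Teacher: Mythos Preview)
The paper gives no explicit proof of this corollary, treating it as an immediate consequence of the Main Theorem; your argument correctly supplies the straightforward deduction, including the connectedness argument for preserving Zariski density under restriction to $\Gal(\Qbar/\QQ(i))$ in the $E_7$ case.

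One small point deserves a sentence of justification: you conclude that $M_i\not\cong M_j$ because an isomorphism of motives would force $\iota\circ\rho_{x_i}\cong\iota\circ\rho_{x_j}$ as $\GL(V)$-valued representations, and you cite the non-isomorphism of $\rho_{x_i},\rho_{x_j}$ as $\hatG$-valued representations from part~(3). The passage from $\GL(V)$-conjugacy to $\hatG$-conjugacy is not automatic in general, but it holds here for either of two reasons. First, since $\hatG$ is simple adjoint of type $E_7$, $E_8$, or $G_2$, its outer automorphism group is trivial; combined with Zariski density of the images, any element of $\GL(V)$ conjugating $\iota\circ\rho_{x_i}$ to $\iota\circ\rho_{x_j}$ must lie in $\hatG\cdot Z_{\GL(V)}(\hatG)$, giving $\hatG$-conjugacy. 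Second, and more concretely, the proof of Proposition~\ref{p:Qmono}(2) shows that the $\rho_{x_i}$ are distinguished by the primes at which they have regular unipotent inertia, an invariant visible already in any faithful representation and unaffected by restriction to the index-two subgroup $\Gal(\Qbar/\QQ(i))$.
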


\subsubsection{A known case}
In \cite{DR}, Dettweiler and Reiter constructed a rigid rank 7 local system on $\X{}$ whose geometric monodromy is dense in $G_2$. The restriction of this local system to a general rational points give motivic Galois representations whose image is dense in $G_2$. We believe that our construction gives the same local system as theirs (see remarks after Conjecture \ref{c:inf}).

On the other hand, Gross and Savin \cite{GS} gave an approach to find a $G_{2}$ motive in the cohomology of a Siegel modular variety.

In \cite{DR}, the authors consider the notion of motivic Galois groups in the category of {\em motives with motivated cycles}, a notion introduced by Andr\'e \cite{Andre}. Using Andr\'e's Theorem (\cite[Theorem 5.2 and Section 5.3]{Andre}, see also \cite[Theorem 3.2.1]{DR}), they show that their motives have motivic Galois groups of type $G_2$ in the sense of Andr\'e. We may apply the same argument to show that our motives also have the expected motivic Galois groups in the sense of Andr\'e. We omit the details here.

\subsubsection{Application to the inverse Galois problem} The inverse Galois problem for $\QQ$ asks whether every finite group can be realized as a Galois group of a finite Galois extension of $\QQ$. A lot of finite simple groups are proved to be Galois groups over $\QQ$, see \cite{MM} and \cite{SerreGal}, yet the problem is still open for many finite simple groups of Lie type. We will be concerned with finite simple groups $G_{2}(\ell)$ and $E_{8}(\ell)$, where $\ell$ is a prime. According to \cite[Chapter II, \S10]{MM}, $G_{2}(\ell)$ is known to be a Galois group over $\QQ$ for all primes $\ell$; however, $E_{8}(\ell)$ is known to be a Galois group over $\QQ$ only for $\ell\equiv\pm3,\pm7,\pm9,\pm10,\pm11,\pm12,\pm13,\pm14(\mod 31)$. As an application of our main construction, we show

\begin{theorem}\label{th:invGalois} For sufficiently large prime $\ell$, the finite simple groups $E_{8}(\FF_{\ell})$ are Galois groups over $\QQ$.
\end{theorem}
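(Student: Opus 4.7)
The plan is to apply the Main Theorem with $\hatG$ of type $E_8$ (for $E_8$ the adjoint and simply connected forms coincide) and then reduce the resulting $\ell$-adic representation mod $\ell$. For each prime $\ell$ the theorem supplies a continuous representation $\rho\colon \pi_1(\X{\Z}) \to E_8(\Ql)$ whose restriction to each geometric fiber has Zariski dense image. The image of $\pi_1$ is profinite, hence compact, so up to conjugation it lies in a hyperspecial maximal compact $E_8(\ZZ_\ell) \subset E_8(\Ql)$, and reduction mod $\ell$ yields a continuous representation $\bar\rho\colon \pi_1(\X{\Z}) \to E_8(\FF_\ell)$. Since $E_8$ has trivial center, $E_8(\FF_\ell)$ is a non-abelian finite simple group for every prime $\ell$, so it will suffice to exhibit a rational point $x \in \X{\QQ}$ at which the specialization $\bar\rho_x\colon \GQ \to E_8(\FF_\ell)$ is surjective.

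The core of the argument is to show that, for $\ell$ sufficiently large, the geometric mod-$\ell$ monodromy $\Gamma_\ell := \bar\rho(\pi_1(\X{\overline{\QQ}}))$ is already all of $E_8(\FF_\ell)$. I would combine two inputs. First, a Larsen--Pink or Nori-type approximation theorem converts the Zariski density of $\rho(\pi_1(\X{\overline{\QQ}}))$ in $E_8(\Ql)$ into the statement that, for all but finitely many $\ell$, the subgroup $\Gamma_\ell$ is ``large'': it contains the $\FF_\ell$-points of a connected reductive $\FF_\ell$-subgroup of $E_8$ of positive dimension. Second, I would extract concrete structural information from the construction of $\rho$: the tame local monodromy at one of the punctures $0, 1, \infty$ is expected to be a regular unipotent element of $E_8$, and at another puncture a semisimple element in a controlled conjugacy class. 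Feeding these specific elements into the Liebeck--Seitz classification of maximal subgroups of $E_8(\FF_\ell)$ should rule out every proper possibility---parabolic subgroups, subsystem subgroups, subfield subgroups, and exotic Lie-primitive finite subgroups---forcing $\Gamma_\ell = E_8(\FF_\ell)$ for $\ell$ outside a finite exceptional set.

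Once this geometric mod-$\ell$ surjectivity is established, Hilbert irreducibility applied to the finite étale Galois cover of $\X{\QQ}$ cut out by $\bar\rho$ produces infinitely many rational points $x \in \X{\QQ}$ at which $\bar\rho_x$ surjects onto $\bar\rho(\pi_1(\X{\QQ})) \supset \Gamma_\ell = E_8(\FF_\ell)$, yielding the desired surjection $\GQ \twoheadrightarrow E_8(\FF_\ell)$ and hence realizing $E_8(\FF_\ell)$ as a Galois group over $\QQ$. The main obstacle is the middle step: Zariski density in $E_8(\Ql)$ does not automatically imply full mod-$\ell$ image uniformly in $\ell$. Closing this gap requires genuinely using the structure of the function-field-Langlands construction to pin down local monodromies, together with the fine subgroup classification of $E_8(\FF_\ell)$; the motivic and potential-motivic content of the Main Theorem itself plays no role in the inverse Galois application, only the existence of the $\ell$-adic family $\bar\rho$ with controllable local behavior.
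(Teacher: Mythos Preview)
Your overall shape (reduce mod $\ell$, prove mod-$\ell$ surjectivity of geometric monodromy, apply Hilbert irreducibility) matches the paper's, but your middle step takes a different and harder route, and your final sentence is actually backwards.

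The paper's key move is to exploit the \emph{Betti realization}: since the local system is of geometric origin, over $\CC$ in the analytic topology one gets a representation $\rho^{\topo}\colon \pi_1^{\topo}(\X{\CC})\to \hatG(\QQ)$ of the \emph{topological} fundamental group, landing in $\QQ$-points (indeed in $\GL_n(\ZZ[1/N_1])\cap\hatG(\QQ)$ for some $N_1$). The comparison isomorphism between singular and $\ell$-adic cohomology makes the mod-$\ell$ reduction of $\rho_{\QQ}$ factor through the mod-$\ell$ reduction of this single finitely generated, Zariski-dense subgroup $\Pi\subset\hatG(\QQ)$. Then the Matthews--Vaserstein--Weisfeiler strong approximation theorem says directly that $\Pi$ surjects onto $\hatG(\FF_\ell)$ for all large $\ell$. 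No Larsen--Pink, no Liebeck--Seitz case analysis, no local-monodromy bookkeeping is needed. So the motivic content is not incidental---it is precisely what produces the $\QQ$-rational topological representation to which MVW applies.

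Your proposed route via local monodromies and the maximal-subgroup classification is plausible in spirit, but in this paper it would run into a concrete obstacle: the precise local monodromy at $0$ and $\infty$ is only partially determined (Propositions~\ref{p:mono0} and~\ref{p:monoinf} give the semisimple part at $0$ and show the class at $\infty$ is unipotent and neither trivial nor regular), while the exact classes you would want to feed into a rigid-triple or Liebeck--Seitz argument are stated only as Conjectures~\ref{c:0} and~\ref{c:inf}. Thus your approach, as written, depends on inputs the paper does not establish.
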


\subsection{The case of type $A_1$}
The Main Theorem also holds if $\hatG=\PGL_{2}$, and the construction in this case is easier to spell out. Let $k=\FF_{q}$ be a finite field of characteristic not $2$. The construction starts with an automorphic form of $G=\SL_{2}$. Let $T\subset B\subset G$ be the diagonal torus and the upper triangular matrices. Let $F=k(t)$ be the function field of $\PP^1_k$ where $k$ is a finite field. For each place $v$ of $F$, let $\calO_v,F_{v}$ and $k_{v}$ be the corresponding completed local ring, local field and residue field. For each $v$, we have the Iwahori subgroup $\bI_{v}\subset G(\calO_{v})$ which is the preimage of $B(k_{v})\subset G(k_{v})$ under the reduction map $G(\calO_{v})\to G(k_{v})$. 

Let $\pi=\otimes_v\pi_v$ be an irreducible automorphic representation of $\SL_{2}(\AA_F)$ satisfying the following conditions:
\begin{itemize}
\item $\pi_1$  and $\pi_{\infty}$ both contain a nonzero fixed vector under the Iwahori subgroup $\bI_v$.
\item $\pi_{0}$ contains a nonzero vector on which the Iwahori $\bI_{0}$ acts through the quadratic character $\mu:\bI_{0}\to T(k)=k^{\times}\twoheadrightarrow\{\pm1\}$.
\item For $v\neq 0,1$ or $\infty$, $\pi_v$ is unramified.
\end{itemize}

The following lemma can be proved by the same argument of Theorem \ref{th:clean}.
\begin{lemma}
Suppose $\sqrt{-1}\in k$, then such an automorphic representation $\pi$ exist.
\end{lemma}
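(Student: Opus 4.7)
The plan is to follow the template of Theorem \ref{th:clean}: translate the existence of $\pi$ into the non-vanishing of a finite-dimensional space of functions on a concrete moduli stack, then verify non-vanishing directly.

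First I would set up the geometric model. Let $K\subset\SL_2(\AA_F)$ be the open compact subgroup with $K_v=\SL_2(\calO_v)$ for $v\notin\{0,1,\infty\}$, $K_v=\bI_v$ for $v=1,\infty$, and $K_0\subset\bI_0$ the kernel of $\mu$. The space of functions on $\SL_2(F)\backslash\SL_2(\AA_F)/K$ on which $\bI_0/K_0=\{\pm 1\}$ acts by $\mu$ is naturally identified with the $\mu$-isotypic component (for the residual $T(k)$-action at $0$) of $\Qlbar$-valued functions on the groupoid of $k$-points of $\Bun_{\SL_2}(\PP^1)$ equipped with a Borel reduction at each of $0$, $1$, $\infty$. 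Producing the desired $\pi$ then reduces to exhibiting a nonzero element in this space.

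The geometric target is very explicit: $\SL_2$-bundles on $\PP^1$ are classified by a single nonnegative integer, a Borel reduction at a point is a flag in the fiber, and the stratification of the moduli is essentially a disjoint union of locally closed subschemes that are quotients of products of copies of $\PP^1$. One can therefore carry out an essentially combinatorial computation of the dimension of the $\mu$-isotypic subspace, just as in the proof of Theorem \ref{th:clean}. The hypothesis $\sqrt{-1}\in k$ enters at this point: since $\mu:k^{\times}\to\{\pm 1\}$ is the nontrivial quadratic character, the non-vanishing of the $\mu$-isotypic part reduces to a character sum whose non-vanishing is controlled by a Gauss-sum sign, and $q\equiv 1\pmod 4$ is the arithmetic condition that prevents this sign from killing the sum.

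Once a nonzero function $\varphi$ is produced, I would decompose the cyclic $\SL_2(\AA_F)$-module it generates into irreducibles. Any irreducible constituent $\pi$ is unramified outside $\{0,1,\infty\}$ and contains a vector of the required type at each of the three ramified places; since the local representations of $\SL_2(F_v)$ of the prescribed local type are uniquely determined by that type (these are the principal series singled out by Iwahori-spherical, resp.\ $\mu$-eigen, structure), $\pi_v$ is forced to be the desired local representation. The main obstacle, and the step that most directly mimics the proof of Theorem \ref{th:clean}, is the non-vanishing of the $\mu$-isotypic summand: this is not automatic from the non-vanishing of the full space of $K$-fixed functions, and it is precisely here that the hypothesis $\sqrt{-1}\in k$ is used in an essential way.
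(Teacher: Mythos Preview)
Your overall strategy --- translate to non-vanishing of a $\mu$-isotypic function space on a moduli stack, then extract an irreducible constituent --- matches the paper's, but your account of the key step is off. The proof of Theorem~\ref{th:clean} is not a combinatorial dimension count; it is a cleanness argument showing that on every non-open $K$-orbit the stabilizer of a point contains $\mu^{\ker}_2$ in its neutral component, so no odd sheaf (hence no $\mu$-eigenfunction) can be supported there. For $\SL_2$ one has $K=T=\Gm$ acting on $\fl_G=\PP^1$ with open orbit $U=\Gm$, the double cover $\tilK\to K$ is the squaring map $\Gm\to\Gm$, and the stabilizer $\tilA$ of a point of $U$ is $\mu_4$. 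Odd objects on the open orbit are then governed by the odd characters of $\tilA_0=\tilA=\mu_4$.

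The role of $\sqrt{-1}\in k$ is not a Gauss-sum sign. In the paper's framework the odd characters of $\mu_4$ have order four and are $\Gk$-fixed only when $\sqrt{-1}\in k$ (this is exactly assumption~\eqref{ass:i} specialized to type $A_1$). Concretely on functions: $t\in K(\FF_q)=\FF_q^\times$ acts on $U(\FF_q)=\FF_q^\times$ by $x\mapsto t^2 x$, so a $\mu$-eigenfunction must satisfy $f(t^2 x)=\mu(t)f(x)$; taking $t=-1$ gives $f=\mu(-1)f$, forcing $f\equiv 0$ unless $\mu(-1)=1$, i.e.\ $q\equiv 1\pmod 4$. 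When this holds the space is visibly two-dimensional (one basis vector per square class), matching $\#Z\SL_2=2$ in~\eqref{mult}. That consistency condition for $\mu$ to descend along squaring is the entire mechanism --- no character sum needs to be evaluated. (Your step 4 also overstates: an Iwahori-fixed or $(\bI_0,\mu)$-eigen vector does not pin down $\pi_v$ uniquely, but the lemma only asks for existence of such vectors, so any irreducible constituent suffices and this part is harmless.)
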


Langlands philosophy then predicts that there should be a tame $\PGL_{2}(\Qlbar)$-local system on $\X{k}$ which has unipotent monodromy around the punctures $1$ and $\infty$, and monodromy of order 2 around the puncture $0$. Methods from geometric Langlands theory allow us to write down this local system explicit as follows.

\subsubsection{The local system} We do {\em not} assume $\sqrt{-1}\in k$. Consider the following family of genus 3 projective smooth curves $f:C\to\X{k}$:
\begin{equation}\label{curve}
C_\lambda: y^4=\frac{\lambda x-1}{\lambda x(x-1)},\hspace{1cm}\lambda\in\X{k}.
\end{equation}
Let $\Ql'=\Ql(\sqrt{-1})$. The group $\mu_{4}$ acts on the local system $\bR^{1}f_{*}\Ql'$. Let $k'=k(\sqrt{-1})$. Let $\chi:\mu_{4}(k')\to\Ql'^{\times}$ be a character of order 4. Over $\X{k'}$, we may decompose $\bR^{1}f_{*}\Ql'$ according to the action of $\mu_{4}(k')=\mu_{4}(\kbar)$:
\begin{equation*}
(\bR^{1}f_{*}\Ql')_{k'}=L_{\sgn}\oplus L_{\chi}\oplus L_{\chibar}.
\end{equation*}
Here $L_{\sgn}, L_{\chi}$ and $L_{\chibar}$ are rank two local system defined on $\X{k'}$. In fact $L_{\sgn}$ interpolates the $\upH^{1}$ of the Legendre family of elliptic curves. By Katz's results on the local monodromy of middle convolutions, one can show \footnote{This is communicated to the author by N.Katz} that the local geometric monodromy of both $L_\chi$ and $L_{\chibar}$ at $0,1$ and $\infty$ are conjugate to
\begin{equation*}
\mu_{0}\sim\mat{\sqrt{-1}}{0}{0}{-\sqrt{-1}}; \mu_{1}\sim\mat{1}{1}{0}{1}; \mu_{\infty}\sim\mat{1}{1}{0}{1}.
\end{equation*}
The $\PGL_{2}(\Ql')=\SO_{3}(\Ql')$-local systems $\Sym^{2}(L_{\chi})(1)$ and $\Sym^{2}(L_{\chibar})(1)$ are canonically isomorphic, and they descend to (isomorphic) $\PGL_{2}(\Ql')$-local systems on $\X{k}$. This is the local system predicted by the Langlands correspondence. Moreover, one can argue that this $\PGL_{2}(\Ql')$ local system comes from a $\PGL_{2}(\Ql)$-local system via extension of coefficient fields.

\subsubsection{Switching to $\QQ$} Note that the above construction makes perfect sense if we replace the finite field $k$ by any field of characteristic not equal to 2, and in particular $\QQ$. The resulting $\PGL_{2}$-local system $\Sym^{2}(L_{\chi})(1)$ over $\X{\QQ}$ is output of our Main Theorem in the case of type $A_{1}$. This local system is visibly potentially motivic because it is part of the $\upH^{1}$ of the family of curves \eqref{curve} cut out by the $\mu_{4}$-action.

\subsection{The motives}\label{ss:explicit}
Let $G$ be a split simply-connected group of type $E_7, E_8$ or $G_2$, defined over $\QQ$. Let $\hatG$ be its Langlands dual group defined over $\Ql$.

Let $\alpha^\vee$ be the coroot of $G$ corresponding to the maximal root $\alpha$. Let $V^{\qm}:=V_{\alpha^\vee}$ be the irreducible representation of $\hatG$ with highest weight $\alpha^\vee$, which we call the {\em quasi-minuscule representation} of $\hatG$. This is either the adjoint representation (if $G$ is of type $E_7$ or $E_8$) or the 7-dimensional representation if $G$ is of type $G_2$. For $x\in\QQ-\{0,1\}$, let $\rho_{x}^{\qm}$ be the composition
\begin{equation*}
\rho_{x}^{\qm}:\GQ\xrightarrow{\rho_{x}}\hatG(\Ql)\to\GL(V^{\qm})
\end{equation*}
where $\rho_x$ is as in Main Theorem (2). We will describe $\rho_{x}^{\qm}$ motivically.

Let $P$ be the ``Heisenberg parabolic'' subgroup of $G$ containing $T$ with roots $\{\beta\in\Phi_G|\jiao{\beta,\alpha^\vee}\geq0\}$. The unipotent radical of $P$ is a Heisenberg group whose center has Lie algebra $\frg_\alpha$, the highest root space. The contracted product gives a line bundle over the partial flag variety $G/P$:
\begin{equation*}
Y=G\twtimes{P}\frg^*_\alpha 
\end{equation*}
which is a smooth variety over $\QQ$ of dimension $2h^\vee-2$. Here $h^\vee$ is the dual Coxeter number of $G$, which is 18 for $E_{7}$, 30 for $E_{8}$ and 4 for $G_{2}$. So the corresponding variety $Y$ has dimension 34, 58 and 6 in the case $E_{7}, E_{8}$ and $G_{2}$ respectively. The more complicated part is that there is a divisor $D_x\subset Y$ depending algebraically on $x$. There is also a finite group scheme $\tilA$ over $\QQ$ and an $\tilA$-torsor
\begin{equation}\label{Atorsor}
\tilY_{x}\to Y-D_{x}.
\end{equation}
The group scheme $\tilA$ is well-known to experts in real Lie groups, and will be recalled in Section \ref{ss:A}. It is a central extension of $\mu^{r}_{2}$ by $\mu_{2}$, where $r$ is the rank of $G$. Let $\Ql[\tilA(\Qbar)]$ be group algebra of $\tilA(\Qbar)$, and let $\Ql[\tilA(\Qbar)]_{\odd}$ be the subspace where the central $\mu_{2}$ acts via the sign representation.

Let $\tilA^{(2)}:=\tilA\times\tilA/\Delta(\mu_{2})$, where $\Delta(\mu_{2})$ means the diagonal copy of the central $\mu_{2}$. Left and right multiplication and Galois action gives an $\tilA^{(2)}(\Qbar)\rtimes\GQ$-action on $\tilA(\Qbar)$, and hence on the odd part of the group algebra $\QQ[\tilA(\Qbar)]_{\odd}$. 

We will see there is in fact an $\tilA^{(2)}$-action on $\tilY_{a}$, such that the action of the first copy of $\tilA$ gives the torsor \eqref{Atorsor}. Consider the middle dimensional cohomology $\cohoc{2h^{\vee}-2}{\tilY_{x},\Ql}$. We take its direct summand $\cohoc{2h^{\vee}-2}{\tilY_{x},\Ql}_{\odd}$ where the central $\mu_{2}$ of $\tilA^{(2)}$ acts via the sign representation. Then there is an $\tilA^{(2)}(\Qbar)\rtimes\GQ$-equivariant isomorphism
\begin{equation*}
\cohoc{2h^{\vee}-2}{\tilY_{x},\Ql}_{\odd}(h^{\vee}-1)\cong\rho^{\qm}_{x}\otimes\Ql[\tilA(\Qbar)]_{\odd}.
\end{equation*} 
Here $(h^{\vee}-1)$ means Tate twist. This gives a motivic description of $\rho^{\qm}_x$.

\subsection{The general construction}
In the many body of the paper, we work with a simply-connected almost simple split group $G$ over a field $k$ with $\textup{char}(k)\neq2$. We assume $G$ to satisfy two conditions
\begin{enumerate}
\item The longest element in the Weyl group of $G$ acts by $-1$ on the Cartan subalgebra.
\item $G$ is oddly-laced: i.e., the ratio between the square lengths of long roots and short roots of $G$ is odd.
\end{enumerate}
By the Dynkin diagram classification, the above conditions are equivalent to
\begin{equation*}
\text{\em $G$ is of type $A_{1},D_{2n},E_{7},E_{8}$ or $G_{2}$}.
\end{equation*} 
Our goal is to construct $\hatG(\Ql)$-local systems over $\X{k}$, where $\hatG$ is the Langlands dual group of $G$. For all the above $G$, $\hatG$ is the split adjoint group over $\Ql$ which is of the same type as $G$. Our construction again starts with an automorphic representation of $G$.

\noindent{\bf Step I.} Let $F=k(t)$ be the function field of $\PP^{1}_{k}$, where $k$ is a finite field. We start with a set of local conditions that we want an automorphic representation $\pi=\bigotimes'_{v\in|\PP^{1}|}\pi_{v}$ of $G(\AA_{F})$ to satisfy. For each place $v$, let $F_{v}$ be the corresponding local field, then $\pi_{v}$ is an irreducible admissible representation of $G(F_{v})$. The conditions we will consider are
\begin{itemize}
\item $\pi_{1}$ has a nonzero fixed vector under the Iwahori $\bI_{1}\subset G(F_{1})$;
\item $\pi_{\infty}$ has a nonzero fixed vector under the parahoric $\bP_{\infty}\subset G(F_{\infty})$;
\item $\pi_{0}$ has an eigenvector under which $\bP_{0}\subset G(F_{0})$ acts through a nontrivial quadratic character $\mu:\bP_{0}\to\{\pm1\}$;
\item $\pi$ is unramified away from the places $0,1,\infty$.
\end{itemize}
The parahoric subgroup $\bP_{0}$ is constructed from the element $-1\in W$ under the correspondence of Gross, Reeder and Yu \cite{GRY}. Its reductive quotient admits a unique nontrivial quadratic character $\mu$. The parahoric $\bP_{\infty}$ has the same type as $\bP_{0}$. For details, we refer to Section \ref{sss:par} and \ref{sss:sym}.

\noindent{\bf Step II.} Show that such automorphic representations do exist and are very limited in number. This part of the argument relies on a detailed study of the structure of the double coset
\begin{equation}\label{coset}
G(F)\backslash G(\AA_{F})/\left(\bP_{0}\times\bI_{1}\times\bP_{\infty}\times\prod_{v\neq0,1,\infty}G(\calO_{v})\right).
\end{equation} 
Up to making a finite extension of $k$, we have
\begin{equation}\label{mult}
\sum_{\pi\textup{as above}}m(\pi)\dim\pi_{0}^{(\bP_{0},\mu)}\dim\pi_{\infty}^{\bP_{\infty}}=\#ZG.
\end{equation}
Here, $m(\pi)$ is the multiplicity of $\pi$ in the automorphic spectrum, $\pi_{0}^{(\bP_{0},\mu)}$ is the $\mu$-eigenspace under $\bP_{0}$, and $ZG$ is the center of $G$. Note that the central character of $\pi$ has to be trivial, so the multiplicity $\#ZG$ in the above formula is {\em not} the contribution from different central characters.

Neither Step I nor Step II actually appear in the main body of the paper. We start directly with a geometric reinterpretation of the previous two steps, which makes sense for any field $k$ with $\textup{char}(k)\neq2$.
 
\noindent{\bf Step III.} We interpret the double coset \eqref{coset} as the $k$-points of a moduli stack $\Bun_{G}(\bP_{0},\bI_{1},\bP_{\infty})$: the moduli stack of principal $G$-bundles over $\PP^{1}$ with three level structures at $0,1$ and $\infty$ as specified by the parahoric subgroups. In fact we will consider a variant $\Bun=\Bun_{G}(\wt{\bP_{0}},\bI_{1},\bP_{\infty})$ of this moduli stack to accommodate the quadratic character $\mu$. These moduli stacks are defined in Section \ref{ss:moduli}. Automorphic functions on the double coset (modified by the character $\mu$) are upgraded to ``odd'' sheaves on $\Bun$, which are studied in Section \ref{ss:sheaves}. Theorem \ref{th:clean} is crucial in understanding the structure of such odd sheaves: they correspond to representations of the finite group $\tilA$ we mentioned before.

\noindent{\bf Step IV.} We take an irreducible odd sheaf $\calF$ on $\Bun$, and apply geometric Hecke operators to it. A geometric Hecke operator $\TT(\calK,-)$ is a geometric analog of an integral transformation, which depends on a ``kernel function'' $\calK$. The kernel $\calK$ is an object in the ``Satake category'', which is equivalent to the category of algebraic representations of $\hatG$. The resulting sheaf $\TT(\calK,\calF)$ is over $\Bun\times(\X{})$. In Theorem \ref{th:eigen}(1), we prove that $\calF$ is an eigen object under geometric Hecke operators: every Hecke operator $\TT(\calK,-)$ transforms $\calF$ to a sheaf $\calF\boxtimes\calE(\calK)$ on $\Bun\times\X{}$, with $\calE(\calK)$ a local system on $\X{}$. The collection $\{\calE(\calK)\}_{\calK}$ forms a tensor functor from the Satake category (which is equivalent to $\Rep(\hatG)$) to the category of local systems on $\X{}$, and gives the desired $\hatG$-local system $\calE$ on $\X{}$.

We see that the local system $\calE$ depends on the odd sheaf $\calF$. In fact there are exactly $\#ZG$ odd central characters $\chi$ of $\tilA$, each giving a unique irreducible representation $V_\chi$ of $\tilA$. Each $V_\chi$ gives an odd sheaf $\calF_{\chi}$ on $\Bun$, and hence a $\hatG$-local system $\calE_{\chi}$.

\noindent{\bf Step V.} When $k=\QQ$, the previous step gives a $\hatG(\Qlbar)$-local system on either $\X{\QQ}$ or $\X{\QQ(i)}$ depending on whether the central character $\chi$ can be defined rationally or not. We need to apply two descent arguments. First we descend $\calE_{\chi}$ from $\X{\QQ(i)}$ to $\X{\QQ}$, which is stated in Theorem \ref{th:eigen}(2) and proved in Section \ref{sss:descent}. Next we descend the coefficient field from $\Qlbar$ to $\Ql$, which is stated in Theorem \ref{th:eigen}(3) and proved in Section \ref{sss:Ql}. For reader who does not care much about the rationality issues, this step can be ignored.

\noindent{\bf Step VI.} Finally in Proposition \ref{p:int} we extend the local system $\calE_{\chi}$ from $\X{\QQ}$ to $\X{\Z}$, such that its restriction along $\X{\FF_{p}}$ is the same as $\calE_{\chi,\FF_{p}}$ we constructed using the base field $\FF_{p}$. 

\subsubsection{} We indicate where the main results are proved.
\begin{itemize} 
\item For $k$ an algebraically closed field, the density of the image of $\rho_k$ is proved in Theorem \ref{th:geommono}, whose proof depends on detailed analysis of local monodromy in Section \ref{ss:loc}.

\item The fact that $\rho_x$ is motivic or potentially motivic is proved in Proposition \ref{p:motivic}.

\item To see there exists a rational number $x$ such that $\rho_{x}$ has dense image, we only need to use a variant of Hilbert irreducibility \cite[Theorem 2]{T}. However, in Corollary \ref{p:Qmono} we prove a more effective result: it gives sufficient conditions for $\rho_{x}$ to have dense image, and also proves there are infinitely non-isomorphic $\rho_x$'s. The proof relies on a general property of Galois representations given in Proposition \ref{p:Galrep}.

\item The application to the inverse Galois problem (Theorem \ref{th:invGalois}) is given in Section \ref{ss:invGal}.
\end{itemize}

\subsection{Conjectures and generalizations} In Section \ref{ss:further}, we list some conjectural properties about the local and global monodromy of the local systems we construct. It is also possible to determine the Hodge structure carried by the Betti realization of our local systems.

Groups without $-1$ in their Weyl groups are not treated in this paper, but they can be treated in a similar way by adding a twisting. Instead of working with the constant group scheme $G\times\PP^{1}$, we can work with a quasi-split form of it which splits over the double covering $\PP^{1}\to\PP^{1}$ ramified at $0$ and $\infty$, with the twisting given by the involution in $\Out(G)$. Modifying the construction in this paper properly, we expect to get $\hatG\rtimes\Out(\hatG)$-local systems on $\X{}$. In particular, we expect to realize $E_{6}$ as a motivic Galois group in this way.


\section{The automorphic sheaves}
Throughout the paper, $k$ is a field with $\textup{char}(k)\neq2$. Let $\ell$ be a prime number different from $\textup{char}(k)$. {\em Spaces without subscripts are defined over $k$}.

\subsection{Group-theoretic data}
\subsubsection{The group $G$} Let $G$ be a split almost simple simply-connected group over $k$. We fix a maximal torus $T$ of $G$ and a Borel $B$ containing it. Therefore we get a based root system $\Delta_G\subset\Phi_G\subset\xch(T)$ (where $\Phi_G$ stands for roots and $\Delta_G$ for simple roots) and a Weyl group $W$. We make the following assumption
\begin{equation}\label{ass:-1}
\textup{\em Assume the longest element in $W$ acts as -1 on $\xch(T)$}.
\end{equation}
This means $G$ is of type $A_{1},B_{n},C_{n}, D_{2n}, E_{7}, E_{8}, F_{4}$ or $G_{2}$.

\subsubsection{Loop group}
We partially follow the notations of \cite[Section 1 and 2]{PR}. Let $LG$ be the loop group associated to $G$: it represents the functor $R\mapsto G(R((t)))$ where $R$ is any $k$-algebra and $t$ is a formal variable. Similarly we define the group of positive loops $L^{+}G:R\mapsto G(R[[t]])$. In practice, we have a smooth curve and we denote its completion at a $k$-point $x$ by $\calO_{x}$, which is isomorphic to $k[[t]]$ but not canonically so. Let $F_{x}$ be the field of fractions of $\calO_{x}$. We define $L^{+}_{x}G$ (resp. $L_{x}G$) to be the group (ind-)scheme over the residue field $k(x)$ representing the functor $R\mapsto G(R\htimes_{k(x)}\calO_{x})$ (resp. $R\mapsto G(R\htimes_{k(x)}F_{x})$).

\subsubsection{Parahoric subgroups}\label{sss:par} By Bruhat-Tits theory, for each facet $\una$ in the Bruhat-Tits building $\frB$ of $G(k((t)))$, there is a smooth group scheme $\calP_{\una}$ over $k[[t]]$ with connected fibers whose generic fiber is $G\times_{\Spec k}\Spec k((t))$. We call such $\calP_{\una}$ a {\em Bruhat-Tits group scheme}. Let $\bP_{\una}$ be the group scheme over $k$ representing the functor $R\mapsto \calP(R[[t]])$. This is a pro-algebraic subgroup of $LG$, called a parahoric subgroup. The conjugacy classes of parahoric subgroups of $LG$ are classified by proper subsets of the nodes of the affine Dynkin diagram of $G$.

The group $L^{+}G$ is a particular parahoric subgroup of $LG$, corresponding to the Bruhat-Tits group scheme $\calG=G\times\Spec k[[t]]$. The Borel $B\subset G$ gives another parahoric subgroup called an Iwahori subgroup $\bI\subset L^{+}G\subset LG$: $\bI$ represents the functor $R\mapsto\{g\in G(R[[t]]);g\mod t\in B(R)\}$.

In \cite{GRY}, Gross, Reeder and Yu define a map from regular elliptic conjugacy classes of $W$ to certain conjugacy classes of parahoric subgroups of $LG$. In particular, the longest element $-1\in W$ corresponds to a parahoric subgroup $\bP$ of $LG$, which we assume to contain the Iwahori subgroup $\bI$. Here is an explicit description of their correspondence in this special case. Recall the maximal torus $T$ gives an apartment $\frA(T)$ in the building $\frB$, which is a torsor under $\xcoch(T)\otimes\RR$. The parahoric subgroup $L^+G$ corresponds to a facet which is a point in $\frA(T)$. Using this point as the origin, we may identify $\frA(T)$ with $\xcoch(T)\otimes\RR$. Let $\rho^{\vee}$ be half of the sum of positive coroots of $G$, which is a vector in $\xcoch(T)\otimes\RR$. Hence we may view $\frac{1}{2}\rho^{\vee}\in\frA(T)$, which lies in a unique facet $\una$, and determines a parahoric subgroup $\bP_{\una}$. This is the parahoric $\bP$ we shall consider.

\subsubsection{The symmetric subgroup}\label{sss:sym}
Let $K$ be the maximal reductive quotient of $\bP$. This is a connected split reductive group over $k$. Since $\bP$ is defined using a facet in the apartment $\frA(T)$, $K$ contains $T$ as a maximal torus. The root system $\Phi_{K}\subset\xch(T)$ of $K$ is then a sub-root system of $\Phi_{G}$. A root $\alpha\in\Phi_{G}$ belongs to $\Phi_{K}$ if and only if there is an affine root $\alpha+n\delta$ ($n\in\ZZ$, $\delta$ is the imaginary root) vanishing at $\frac{1}{2}\rho^{\vee}$, i.e., if and only if $\jiao{\rho^{\vee},\alpha}$ is an even integer. Let $G^{\ad}$ be the adjoint form of $G$ with maximal torus $T^{\ad}=T/ZG$ and note that $\rho^{\vee}\in\xcoch(T^{\ad})$, hence we have an element $\rho^{\vee}(-1)\in G^{\ad}(k)$ of order at most 2. This is a {\em split Cartan involution} of $G$, as we shall see in Section \ref{sss:Cartaninv}. By identifying the root system, we have
\begin{equation*}
K=G^{\rho^{\vee}(-1)}.
\end{equation*}
The Dynkin diagram of $K$ is obtained from the affine Dynkin diagram of $G$ by removing either one node (if $G$ is not of type $A_1$ or $C_{n}$), or two extremal nodes (if $G$ is of type $A_1$ or $C_{n}$), and all the adjacent edges. We tabulate the types of $K$ in each case:

\begin{center}
\begin{tabular}{|l|l|}
\hline
Type of $G$ & Type of $K$ \\ \hline
$A_{1}$ & $\Gm$  \\ \hline
$B_{2n}$ & $B_{n}\times D_{n}$ \\ \hline
$B_{2n+1}$ & $B_{n}\times D_{n+1}$ \\ \hline
$C_{n}$ & $A_{n-1}\times\Gm $ \\ \hline
$D_{2n}$ & $D_{n}\times D_{n}$ \\ \hline
$E_{7}$ & $A_{7}$ \\ \hline
$E_{8}$ & $D_{8}$ \\ \hline
$F_{4}$ & $A_{1}\times C_{3}$ \\ \hline
$G_{2}$ & $A_{1}\times A_{1}$ \\ \hline
\end{tabular}
\end{center}
Note that if $k=\RR$, then $K$ has the same type as a maximal compact subgroup of the split real group $G(\RR)$.

We analyze how far $K$ is from being simply-connected.
\begin{lemma}\label{l:Knotsc} Let $\ZZ\Phi^\vee_K\subset\xcoch(T)$ be the coroot lattice of $K$. Then
\begin{equation*}
\xcoch(T)/\ZZ\Phi_K^\vee\cong\begin{cases}\ZZ & G \textup{ is of type }A_1 \textup{ or } C_n \\ \ZZ/2\ZZ & \textup{otherwise}\end{cases}
\end{equation*}
\end{lemma}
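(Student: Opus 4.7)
The plan is to use a presentation of $\xcoch(T)$ coming from the affine Dynkin diagram of $G$, and to identify $\ZZ\Phi^\vee_K$ as the sublattice spanned by the generators corresponding to a subdiagram.

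Since $G$ is simply connected, $\xcoch(T)=\ZZ\Phi^\vee_G$. Setting $\alpha_0^\vee := -\theta^\vee$ for the negative of the highest coroot, and letting $a_0^\vee:=1$ together with $a_1^\vee,\ldots,a_r^\vee$ denote the comarks of the affine Dynkin diagram of $G$, the lattice $\xcoch(T)$ admits the standard presentation by generators $\alpha_0^\vee,\ldots,\alpha_r^\vee$ subject to the single relation $\sum_{i=0}^r a_i^\vee\alpha_i^\vee=0$. By Bruhat--Tits theory, the Dynkin diagram of $K$ is a subdiagram of the affine Dynkin diagram of $G$ obtained by removing some set $S\subset\{0,1,\ldots,r\}$ of nodes, and $\ZZ\Phi^\vee_K$ is generated by the simple coroots $\{\alpha_i^\vee:i\notin S\}$. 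Quotienting the presentation above then yields
\begin{equation*}
\xcoch(T)/\ZZ\Phi^\vee_K \;\cong\; \ZZ\langle \alpha_i^\vee : i \in S\rangle\Big/\Bigl\langle \sum_{i\in S} a_i^\vee\alpha_i^\vee\Bigr\rangle \;\cong\; \ZZ^{|S|-1}\oplus\ZZ/d\ZZ,
\end{equation*}
where $d=\gcd\{a_i^\vee:i\in S\}$, by Smith normal form applied to a single relation in $\ZZ^{|S|}$.

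The remaining task is to identify $S$ and the relevant comarks for each of the nine types of $G$, and this is the main computational step. For $G=A_1$, $K=T$, so $S=\{0,1\}$ and both comarks equal $1$, giving quotient $\ZZ$. For $G=C_n$, removing the two endpoint nodes of $C_n^{(1)}$ (each with comark $1$, since all comarks of $C_n^{(1)}$ equal $1$) leaves the $A_{n-1}$-chain in the middle and produces $K=A_{n-1}\times\Gm$; the quotient is again $\ZZ$. For each of the remaining types $B_{2n}, B_{2n+1}, D_{2n}, E_7, E_8, F_4, G_2$, a direct inspection of the affine Dynkin diagram shows that the type of $K$ listed in the table is obtained by removing a single node whose comark equals $2$ --- for instance, the branch node of $E_7^{(1)}$ gives the $A_7$ chain, the middle chain node of $D_{2n}^{(1)}$ separates it into two $D_n$ forks, and similarly for the others --- yielding quotient $\ZZ/2\ZZ$.

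The main obstacle will be this case-by-case verification, but it is elementary: in each case it amounts to matching the type of $K$ from the table against a subdiagram of the relevant affine Dynkin diagram and reading off the comark, without further structural arguments. Alternatively, one could identify $S$ abstractly from the facet containing $\frac{1}{2}\rho^\vee$ via an affine-Weyl translation into the fundamental alcove, but the direct matching against the table is simpler.
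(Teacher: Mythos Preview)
Your proof is correct and follows essentially the same approach as the paper's. Both arguments identify $\ZZ\Phi_K^\vee$ as the sublattice of $\xcoch(T)=\ZZ\Phi_G^\vee$ generated by the simple coroots indexed by the retained nodes of the affine Dynkin diagram, and then compute the quotient via the single relation $\theta^\vee=\sum c(\alpha)\alpha^\vee$; your comarks $a_i^\vee$ are exactly the paper's coefficients $c(\alpha)$, and the case-by-case check that the removed node has coefficient $2$ (or that two nodes of coefficient $1$ are removed in types $A_1$ and $C_n$) is identical.
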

\begin{proof}
When $G$ is not of type $A_1$ or $C_n$, the simple roots of $K$ are $\Delta_{K}=(\Delta\backslash\{\alpha'\})\cup\{-\theta\}$, where $\theta$ is the highest root of $G$ and $\alpha'$ is the node we remove from the affine Dynkin diagram of $G$. Hence the coroot lattice of $K$ is spanned by the simple coroots $\alpha^{\vee}$ of $G$ with the only exception $\alpha'^{\vee}$,  which is replaced by $\theta^{\vee}$. We write $\theta^{\vee}=\sum_{\alpha\in\Delta_G}c(\alpha)\alpha^{\vee}$, then we have a canonical isomorphism
\begin{equation*}
\xcoch(T)/\ZZ\Phi^{\vee}_{K}\cong\ZZ/c(\alpha')\ZZ.
\end{equation*}
By examining all the Dynkin diagrams, we find $c(\alpha')$ is always equal to 2.

If $G$ is of type $A_1$ or $C_n$, then $K\cong\GL_n$ has fundamental group $\ZZ$.
\end{proof}

\subsubsection{Canonical double covering}\label{sss:doublecover} In all cases, there is a canonical double covering
\begin{equation*}
1\to\mu_{2}\to\tilK\to K\to1
\end{equation*}
with $\tilK$ a connected reductive group. To emphasize we denote $\ker(\tilK\to K)$ by $\mu^{\ker}_2$. When $G$ is {\em not} of type $A_1$ or $C_n$, $\tilK$ is the simply-connected form of $K$.

\subsection{Moduli stacks}\label{ss:moduli}
\subsubsection{Moduli of bundles with parahoric level structures} Fix a set of $k$-points $S\subset\PP^{1}(k)$. For each $x\in S$, fix a parahoric subgroup $\bP_{x}\subset L_{x}G$. Generalizing the definition in \cite[Section 4.2]{Yun}, we can define the an algebraic stack $\Bun_{G}(\bP_{x};x\in S)$ classifying $G$-torsors on $\PP^{1}$ with $\bP_{x}$-level structures at $x$ (in \cite[Section 4.2]{Yun} we only considered the case $S$ is a singleton, but the construction obviously generalizes to the case of multiple points).  
 
Let $\bP_{0}\subset L_{0}G$ be a parahoric subgroup of type $\bP$ defined in Section \ref{sss:par}, such that $\bP_{0}$ contains the standard Iwahori subgroup $\bI_{0}$ defined by $B$. At $\infty\in\PP^{1}$, let $\bI^{\opp}_{\infty}\subset L_{\infty}G$ be the Iwahori subgroup defined by the Borel $B^{\opp}\supset T$ opposite to $B$. Let $\bP_{\infty}\subset L_{\infty}G$ be the parahoric subgroup of type $\bP$ such that $\bP_{\infty}\supset\bI^{\opp}_{\infty}$. The maximal reduction quotients of $\bP_0$ and $\bP_\infty$ are denoted by $K_0$ and $K_\infty$, which are both canonically isomorphic to the symmetric subgroup $K$ of $G$. We study the moduli stack $\Bun_{G}(\bP_{0},\bP_{\infty})$ in this section.

\subsubsection{Birkhoff decomposition}\label{sss:birk} Let $\calP_{\AA^1}$ be the trivial $G$-torsor over $\AA^1$ together with the tautological $\bP_0$-level structure at $0$. Let $\Gamma_0$ be the group ind-scheme of automorphisms of $\calP_\AA^1$: for any $k$-algebra $R$, $\Gamma_0(R)=\Aut_{\AA^1_R}(\calP_{\AA^1_R})$. Recall from \cite[Proposition 1.1]{HNY} (where the Iwahori version was stated; our parahoric situation follows easily from the Iwahori version) that we have an isomorphism of stacks
\begin{equation*}
\Gamma_0\backslash L_\infty G/\bP_\infty\isom\Bun_G(\bP_0,\bP_\infty).
\end{equation*}
Moreover, we have the Birkhoff decomposition
\begin{equation}\label{Birkhoff}
G(\kbar)=\bigsqcup_{W_{K}\backslash\tilW/W_{K}}\Gamma_{0}(\kbar)\tilw\bP_{\infty}(\kbar).
\end{equation}
Here $\tilW$ is the affine Weyl group of $G$. In \eqref{Birkhoff}, every double coset $\tilw\in W_{K}\backslash\tilW/W_{K}$ represents a $k$-point $\calP_{\tilw}\in\Bun_G(\bP_0,\bP_\infty)$, which is given by gluing $\calP_{\AA^1}$ with the parahoric subgroup $\Ad(\tilw)\bP_\infty$ of $L_\infty G$. The automorphism group of $\calP_{\tilw}$ is $\Gamma_0\cap\Ad(\tilw)\bP_0$ (as a subgroup of $L_\infty G$).

In particular, the unit coset $1\in W_{K}\backslash\tilW/W_{K}$ corresponds to a point $\star=\calP_0\in\Bun_{G}(\bP_{0},\bP_{\infty})$, which the trivial $G$-torsor $\calP_0$ on $\PP^1$ with tautological $\bP_0$ and $\bP_\infty$-level structures. The automorphism group of $\star$ is $\Gamma_{0}\cap\bP_{\infty}$, which maps isomorphically to both $K_{0}$ and $K_{\infty}$. Hence we get an embedding $\{\star\}/K\hookrightarrow\Bun_{G}(\bP_{0},\bP_{\infty})$.  

\begin{lemma}\label{l:j0affine}
The embedding $j_{0}:\BB K=\{\star\}/K\hookrightarrow\Bun_{G}(\bP_{0},\bP_{\infty})$ is open and affine.
\end{lemma}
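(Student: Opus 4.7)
The plan is to pull $j_0$ back along the smooth atlas $\pi\colon L_\infty G/\bP_\infty \to \Gamma_0\backslash L_\infty G/\bP_\infty \cong \Bun_G(\bP_0,\bP_\infty)$ and reduce the claim to a statement about the affine partial flag variety. By the Birkhoff decomposition \eqref{Birkhoff}, the preimage $\pi^{-1}(\BB K)$ is the single $\Gamma_0$-orbit $\Omega_0 := \Gamma_0\cdot\bP_\infty/\bP_\infty$ through the base point, and since the stabilizer there equals $\Gamma_0\cap\bP_\infty=K$, we have $\Omega_0\cong\Gamma_0/K$. It therefore suffices to show that $\Omega_0$ is open and ind-affine inside $L_\infty G/\bP_\infty$.

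For openness, the plan is to identify $\Omega_0$ with the ``opposite big cell''. Let $\bP_\infty^{-}\subset L_\infty G$ denote the pro-unipotent ind-subgroup generated by the affine root subgroups negative with respect to $\bP_\infty$; standard loop-group theory gives an open immersion $\bP_\infty^{-}\hookrightarrow L_\infty G/\bP_\infty$ landing in the big cell through the base point. The crucial containment $\bP_\infty^{-}\subseteq\Gamma_0$ can be checked on generators: writing things in the coordinate $t=s^{-1}$ on $\AA^1$ (where $s$ is a uniformizer at $\infty$), the negative affine root subgroups of $\bP_\infty^{-}$ become polynomial loops in $t$ whose values at $t=0$ lie in $\bP_0$, thanks to the parity condition $\jiao{\rho^{\vee},\alpha}\in 2\ZZ$ for $\alpha\in\Phi_K$ that defines the facet $\tfrac{1}{2}\rho^{\vee}$. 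Once $\bP_\infty^{-}\subseteq\Gamma_0$, the open subscheme $\bP_\infty^{-}\subseteq L_\infty G/\bP_\infty$ is contained in the $\Gamma_0$-orbit $\Omega_0$, and since $\Omega_0$ is a single orbit, equality $\Omega_0=\bP_\infty^{-}$ follows.

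For affineness, the ind-scheme $\bP_\infty^{-}$ is an ascending union of finite-dimensional affine spaces (truncations by affine-root degree, each stratum a successive extension of $\Ga$-factors), hence ind-affine. Therefore $j_0$ pulls back along the smooth atlas to an open ind-affine immersion, which means $j_0$ itself is open and affine as a morphism of stacks. The main obstacle I anticipate is the careful verification $\bP_\infty^{-}\subseteq\Gamma_0$, which requires matching the facet $\tfrac{1}{2}\rho^{\vee}$ defining $\bP_0$ with the affine-root decomposition of $\bP_\infty^{-}$; the openness and ind-affineness of $\Omega_0$ are immediate consequences of the identification $\Omega_0=\bP_\infty^{-}$.
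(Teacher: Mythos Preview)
Your approach is genuinely different from the paper's and is in principle workable, but it has a real gap. The paper does not use the big cell at all: it builds a morphism $\Ad^{+}:\Bun_{G}(\bP_{0},\bP_{\infty})\to\Bun_{d}$ (a twisted adjoint bundle), checks via a \v{C}ech argument that $\{\star\}/K$ is exactly the preimage of the locus $\{\calO(-1)^{\oplus d}\}/\GL_{d}$, and observes that this locus is the nonvanishing of a section of the inverse determinant line bundle, hence affine. This determinant-line-bundle technique is reused later in Lemma~\ref{l:affine}, so the paper gets mileage out of it beyond the present statement.

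The gap in your argument is the implication ``$\bP_{\infty}^{-}\subseteq\Omega_{0}$, and $\Omega_{0}$ is a single orbit, hence $\Omega_{0}=\bP_{\infty}^{-}$.'' This does not follow: $\Omega_{0}$ is the $\Gamma_{0}$-orbit and $\bP_{\infty}^{-}$ is only a $\bP_{\infty}^{-}$-orbit, so containment of groups gives containment of orbits in one direction only. What you actually need is the reverse inclusion $\Gamma_{0}\subseteq\bP_{\infty}^{-}\cdot\bP_{\infty}$, equivalently a decomposition $\Gamma_{0}=\bP_{\infty}^{-}\cdot K$. This is a concrete statement about matching affine root subgroups on both sides and is exactly where the specific choice of facets for $\bP_{0}$ and $\bP_{\infty}$ (opposite Iwahoris, same type) must be used carefully; you have sketched one inclusion but not the other.

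A second, smaller issue: ``ind-affine'' is not by itself what you need. The morphism $j_{0}$ is affine iff for every scheme $T\to\Bun_{G}(\bP_{0},\bP_{\infty})$ the pullback is affine over $T$; since the atlas $L_{\infty}G/\bP_{\infty}$ is only an ind-scheme, you should test against closed Schubert varieties $X_{\leq\tilw}$ and show each $\Omega_{0}\cap X_{\leq\tilw}$ is affine. Once $\Omega_{0}$ is identified with the big cell this is a standard Kac--Moody fact (complement of a Cartier divisor), but it should be stated.
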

\begin{proof}
Let $d=\dim G$ and let $\Bun_{d}$ denote the moduli stack of rank $d$ vector bundles on $\PP^{1}$. We have a natural morphism
\begin{equation}\label{GLg}
\Ad^{+}:\Bun_{G}(\bP_{0},\bP_{\infty})\to\Bun_{d}
\end{equation}
defined as follows. Fix an affine coordinate $t$ on $\AA^1$. According to the definition of parahoric level structures in \cite[Section 4.2]{Yun}, an object $(\calP,\calP_{\bP_{0}},\calP_{\bP_{\infty}})\in\Bun_{G}(\bP,\bP_{\infty})(R)$ ($R$ is a locally noetherian $k$-algebra) is a $G$-torsor $\calP$ together with a $\bP_{0}$-reduction $\calP_{\bP_{0}}$ of $\calP|_{\Spec R((t))}$ ( viewed as an $L_0G$-torsor over $\Spec R$) and a $\bP_{\infty}$-reduction $\calP_{\bP_{\infty}}$ of $\calP|_{\Spec R((t^{-1}))}$ (viewed as an $L_{\infty}G$-torsor over $\Spec R$). Let $\bP_0^+=\ker(\bP_0\to K_0)$. From $(\calP,\calP_{\bP_0},\calP_{\bP_\infty})$, we get an $R[[t]]$-submodule $\Ad^{+}(\calP_{\bP_{0}})\subset\Ad(\calP)|_{\Spec R((t))}$ (associated to the adjoint action of $\bP_{0}$ on $\Lie\bP^{+}_{0}$) and an $R[[t^{-1}]]$-submodule $\Ad(\calP_{\bP_{\infty}})\subset\Ad(\calP|_{\Spec R((t^{-1}))})$ (associated to the adjoint action of $\bP_{\infty}$ on $\Lie\bP_{\infty}$). We may glue these two modules with the adjoint bundle $\Ad(\calP)|_{\GG_{m,R}}$ to get a vector bundle $\Ad^{+}(\calP,\calP_{\bP_{0}},\calP_{\bP_{\infty}})\in\Bun_{d}(R)$. This finishes the definition of the morphism \eqref{GLg}.

Alternatively, $\Ad^{+}(\calP_{\bP_{0}})$ and $\Ad(\calP)|_{\GG_{m,R}}$ glue to give a vector bundle on $\AA^1_R$, whose global sections form an $R[t]$-submodule $\Lambda_{0}$ of the $R[t,t^{-1}]$-module $\Gamma(\GG_{m,R},\Ad(\calP))$; $\Ad(\calP_{\bP_{\infty}})$ and $\Ad(\calP)|_{\GG_{m,R}}$ glue to give a vector bundle on $\PP^1_R-\{\infty\}$, whose global sections form an $R[t^{-1}]$ submodule $\Lambda_{\infty}$ of $\Gamma(\GG_{m,R},\Ad(\calP))$. We therefore get a natural homomorphism
\begin{equation}\label{2term}
\Lambda_{0}\oplus\Lambda_{\infty}\to\Gamma(\GG_{m,R},\Ad(\calP)).
\end{equation}
We may view this as a two-term complex $K_\calP$ placed at degree $0$ and $1$. The cohomology of this complex $\cohog{i}{K_{\calP}}$ computes the ($\check{\text C}$ech) cohomology $\cohog{i}{\PP^{1},\Ad^{+}(\calP,\calP_{\bP_{0}},\calP_{\bP_{\infty}})}$. From the explicit description of the points $\calP_{\tilw}\in\Bun_G(\bP_0,\bP_\infty)$, one easily sees that if $R$ is an algebraically closed field,  $(\calP,\calP_{\bP_{0}},\calP_{\bP_{\infty}})$ is isomorphic to the trivial object $\calP_{0}$ if and only if \eqref{2term} is injective, in which case it is also an isomorphism. For an arbitrary base $R$, $(\calP,\calP_{\bP_0},\calP_{\bP_\infty})$ is in the image of $j_0$ if and only if its base change to every geometric fiber is isomorphic to $\calP_0$,  which is then equivalent to saying that \eqref{2term} is an isomorphism, i.e., $\cohog{i}{\PP^{1},\Ad^{+}(\calP,\calP_{\bP_{0}},\calP_{\bP_{\infty}})}=0$ for all $i$. 

As remarked in \cite[Proof of Corollary 1(1) for $\GL_{d}$]{HNY}, the only vector bundle on $\PP^{1}$ of rank $d$ with trivial cohomology is the bundle $\calO(-1)^{\oplus d}$, which is cut out by the non-vanishing of a section of the inverse of the determinant line bundle on $\Bun_{d}$. As discussed above, we have a 2-Cartesian diagram
\begin{equation}\label{CartBun}
\xymatrix{\{\star\}/K\ar@{^{(}->}[r]^{j_{0}}\ar[d] & \Bun_{G}(\bP_{0},\bP_{\infty})\ar[d]^{\Ad^{+}}\\
\{\calO(-1)^{\oplus d}\}/\GL_{d}\ar@{^{(}->}[r]^{j_{\GL}} & \Bun_{\GL_{d}}}
\end{equation}
where the embedding $j_{\GL}$ is affine because it is given by the non-vanishing of a section of a line bundle. Hence $j_{0}$ is also affine.
\end{proof}

\subsubsection{A variant}
Let $\wt{\bP_{0}}=\bP_{0}\times_{K_{0}}\tilK_{0}$, where $\tilK_0$ is the canonical double cover of $K_0$ as in Section \ref{sss:doublecover}. We would like to define the moduli stack $\Bun_{G}(\wt{\bP_{0}},\bP_{\infty})$. We need to explain what we mean by $\wt{\bP_{0}}$-level structures since $\wt{\bP_{0}}$ is not a subgroup of $L_0G$. We may first form the moduli stack $\Bun_{G}(\bP^{+}_{0},\bI_{1},\bP_{\infty})$ where $\bP^{+}_{0}=\ker(\bP_{0}\to K_{0})$. Although $\bP^{+}_{0}$ is not a parahoric subgroup of $L_{0}G$, the definition in \cite[Section 4.2]{Yun} obviously extends to this situation. This is a $K_{0}$-torsor over $\Bun_{G}(\bP_{0},\bP_{\infty})$. We define $\Bun_G(\wt{\bP_{0}},\bP_{\infty})$ as the quotient stack
\begin{equation*}
\Bun_{G}(\wt{\bP_{0}},\bP_{\infty}):=[\tilK_{0}\backslash\Bun_{G}(\bP^{+}_0,\bP_\infty)]
\end{equation*}

\subsubsection{Level structure at three points}\label{sss:3point} Let $\bI_{1}\subset L_{1}G$ be the Iwahori subgroup defined by $B$. We let
\begin{equation*}
\Bun:=\Bun_{G}(\wt{\bP_0},\bI_1,\bP_\infty).
\end{equation*}
As above, we may first form the moduli stack $\Bun^{+}:=\Bun_{G}(\bP^{+}_{0},\bI_{1},\bP_{\infty})$ and then define $\Bun$ as the quotient $[\tilK_{0}\backslash\Bun^{+}]$.

The preimage of $\star\in\Bun_{G}(\bP_{0},\bP_{\infty})$ in $\Bun^{+}$ can be identified with the flag variety of $\fl_{G}$ of $G$, corresponding to all Borel reductions of the trivial $G$-torsor at $t=1$. Therefore we have a canonical open embedding $j^{+}_{1}:\fl_{G}\subset\Bun^+$, or $j_{1}:\tilK\backslash\fl_{G}\hookrightarrow\Bun$. By \cite[Corollary 4.3(i)]{Sp}, $K$ acts on $\fl_{G}$ with finitely many orbits. Therefore there is a unique open $K$-orbit $U\subset\fl_{G}$. We have open embeddings
\begin{equation*}
j:\tilK\backslash U\subset\tilK\backslash\fl_{G}\xrightarrow{j_1}\Bun.
\end{equation*}

\subsubsection{A base point}\label{sss:basepoint} We would like to fix a point $u_{0}\in U(k)$, and henceforth assume
\begin{equation}\label{ass:u0}
\text{\em The base field $k$ is chosen such that $U(k)\neq\emptyset$.}
\end{equation}
We argue that this assumption is not too restrictive. Let $\unU$ be the canonical model of $U$ over $\ZZ[1/2]$ (obtained as the open $\unK$-orbit on $\fl_{\unG}$, where $\unK$ and $\unG$ are split forms over $\ZZ[1/2]$). Since $U_{\QQ}$ is a rational variety, its $\QQ$-points are Zariski dense. Therefore $\unU(\QQ)\neq\emptyset$. We fix $u_{0}\in\unU(\QQ)$, which then extends to a point $\unu_{0}\in \unU(\ZZ[1/2N_0])$ for some positive integer $N_0$.

From the above discussion, whenever char$(k)=0$ or char$(k)$ does not divide $2N_0$, the point $\unu_{0}$ induces a point of $U(k)$, so that the assumption \eqref{ass:u0} holds. We still denote this $k$-point by $u_{0}$.

\subsection{A central extension of a finite 2-group}\label{ss:A}
The point $u_{0}$ fixed above gives a Borel subgroup $B_{0}\subset G$ which is in the most general position with $K$. Let $A=K\cap B_{0}$ be the stabilizer of $u_{0}$ under $K$. Projection to the Cartan quotient $B_{0}\to T_{0}$ induces an isomorphism
\begin{equation*}
A\cong T_{0}[2].
\end{equation*}
Let $\tilA\subset A$ be the preimage of $A$ in $\tilK$. This is a finite group scheme over $k$, and it fits into an exact sequence
\begin{equation}\label{cenext}
1\to\mu^{\ker}_{2}\to\tilA\to A\to1.
\end{equation}
Since $A$ is a discrete group over $k$, we identify $A$ with $A(k)=A(\kbar)$. However, $\tilA$ may not be a discrete group over $k$, so it is important to distinguish among $A, A(k)$ and $A(\kbar)$.

\subsubsection{The structure of $\tilA(\kbar)$} The discussion below works for any simply-connected almost simple $G$. Fix a pinning of $G$ (including $B$ and $T$ we fixed before) and let $\theta\in N_{G}(T)(k)\rtimes\Out(G)$ be a lifting of $-1\in W\rtimes\Out(G)$ (here $\Out(G)$ is the group of pinned automorphisms of $G$). Let $K'=G^{\theta}$. Now the standard Borel $B$ is in general position with $K'$, so that $A':=K'\cap B=T[2]$. Let $\tilA'$ be the preimage of $A'$ in $K'$. When $-1\in W$, $\theta$ is in fact conjugate to $\rho^{\vee}(-1)$ over $\kbar$ because they are both split Cartan involutions (see Section \ref{sss:Cartaninv}). Therefore $K'$ is conjugate to $K$, and $\tilA'(\kbar)\cong\tilA(\kbar)$. The structure of $\tilA'(\kbar)$ is worked out in \cite{ABPTV}. Below we state the results for $\tilA(\kbar)$ directly, although they are proved for $\tilA'(\kbar)$ in \cite{ABPTV}. 

Associated to the central extension \eqref{cenext} there is a quadratic form
\begin{equation*}
q:A\to\mu^{\ker}_{2}
\end{equation*}
which assigns to $a\in A$ the element $\tila^{2}\in\mu^{\ker}_{2}$, here $\tila\in\tilA(\kbar)$ is any lifting of $a$. Associated to this quadratic form there is a symplectic form (or, which amounts to the same in characteristic 2, a symmetric bilinear form)
\begin{equation*}
\jiao{\cdot,\cdot}:A\times A\to\mu^{\ker}_{2}
\end{equation*} 
defined by
\begin{equation*}
\jiao{a,b}=q(ab)q(a)q(b).
\end{equation*}
The symplectic form may be computed by the commutator pairing
\begin{equation*}
\jiao{a,b}=\tila\tilb\tila^{-1}\tilb^{-1}
\end{equation*}
where $\tila,\tilb\in\tilA(\kbar)$ are liftings of $a$ and $b$.

On the coroot lattice $\Lambda^{\vee}=\ZZ\Phi^{\vee}_{G}$, we have a unique $W$-invariant symmetric bilinear form
\begin{equation*}
(\cdot,\cdot):\Lambda^{\vee}\times\Lambda^{\vee}\to\ZZ
\end{equation*}
such that $(\alpha^{\vee},\alpha^{\vee})=2$ if $\alpha^{\vee}$ is a short coroot. Note that for types $B_{n}, C_{n}$ and $F_{4}$, $(\alpha^{\vee},\alpha^{\vee})=4$ if $\alpha^{\vee}$ is a long coroot; for type $G_{2}$, $(\alpha^{\vee},\alpha^{\vee})=6$ if $\alpha^{\vee}$ is a long coroot.

\begin{lemma}\label{l:A} Let $G$ be a split simply-connected almost simple group over $k$. Then
\begin{enumerate}
\item Identifying (the $k$-points of) $A\cong T_{0}[2]$ with $\Lambda^{\vee}/2\Lambda^{\vee}$, we have
\begin{equation}\label{qform}
q(a)=(-1)^{(a,a)/2}\in\mu^{\ker}_2, \textup{ for }a\in A\cong\Lambda^{\vee}/2\Lambda^{\vee}. 
\end{equation}
\item Let $A_{0}$ be the kernel of the symplectic form $\jiao{\cdot,\cdot}$. If $G$ is oddly-laced or $G_{2}$, then $A_{0}=ZG[2]$.
\end{enumerate}
\end{lemma}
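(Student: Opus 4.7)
I would split the proof according to the two parts, preceded by the identification $A = T_0[2] \cong \Lambda^\vee/2\Lambda^\vee$, which is canonical since $G$ is simply-connected (so $\xcoch(T_0) = \Lambda^\vee$) and $T_0[2] = \xcoch(T_0) \otimes_{\ZZ} \mu_2 = \xcoch(T_0)/2\xcoch(T_0)$ via $\lambda \mapsto \lambda(-1)$. For part (1), my plan is to reduce to \cite{ABPTV}. By the assumption \eqref{ass:-1}, any two split Cartan involutions of $G$ are $G(\kbar)$-conjugate (a standard classification fact), so $\rho^\vee(-1)$ is $\kbar$-conjugate to a lifting $\theta \in N_G(T)(\kbar)$ of $-1 \in W$. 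This produces the isomorphism $\tilA(\kbar) \cong \tilA'(\kbar)$ already noted in the text, and the formula \eqref{qform} reads off from the ABPTV analysis of the central extension $\tilA'$ in terms of the form $(,)$. The essential rank-one ingredient is that inside the subgroup $G_\alpha \cong \SL_2$ associated to a coroot $\alpha^\vee$, the central element $\alpha^\vee(-1)$ admits a lift in $\tilK$ whose square equals $(-1)^{(\alpha^\vee, \alpha^\vee)/2} \in \mu^{\ker}_2$; this pins down $q$ on a basis of simple coroots, and together with the associated bilinear form it forces the formula \eqref{qform}.

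For part (2), I would polarize the formula from part (1): using $(a+b, a+b) = (a,a) + 2(a,b) + (b,b)$,
\begin{equation*}
\jiao{a,b} \;=\; q(a+b)\,q(a)\,q(b) \;=\; (-1)^{(a,b)},
\end{equation*}
so $A_0 = \{a \in \Lambda^\vee/2\Lambda^\vee : (a, b) \in 2\ZZ \text{ for all } b \in \Lambda^\vee\}$. For simply-laced $G$ (types $A_1, D_{2n}, E_7, E_8$), the normalization $(\alpha^\vee, \alpha^\vee) = 2$ gives $\jiao{\alpha, b} = (\alpha^\vee, b)$ for every root $\alpha$ and $b \in \Lambda^\vee$; consequently $(a, \Lambda^\vee) \subset 2\ZZ$ is equivalent to $\jiao{\Phi_G, a} \subset 2\ZZ$, i.e., to $a \in 2P^\vee$ (the coweight lattice). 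Thus $A_0 = (2P^\vee \cap \Lambda^\vee)/2\Lambda^\vee$, which is precisely $\ker(T[2] \to T^{\ad}[2]) = ZG[2]$. For $G = G_2$, the center is trivial so $ZG[2] = 0$; using simple coroots $a_1$ long (norm $6$) and $a_2$ short (norm $2$) with $(a_1, a_2) = -3$, the Gram matrix of $(,)$ reduces mod $2$ to the non-degenerate form $\left(\begin{smallmatrix} 0 & 1 \\ 1 & 0 \end{smallmatrix}\right)$, giving $A_0 = 0 = ZG[2]$.

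\textbf{Main obstacle.} The hard part is establishing \eqref{qform} in part (1) from the intrinsic definition of $\tilK \to K$, especially in types $A_1$ and $C_n$ where $\tilK$ is not the simply-connected cover of $K$ and the rank-one calculation must take into account whether a coroot lies in $\Phi_K$ or not. My plan circumvents the direct computation by appealing to \cite{ABPTV} via the conjugacy of split Cartan involutions; once \eqref{qform} is in hand, part (2) reduces to elementary linear algebra on the coroot lattice and the pairing $(,)$.
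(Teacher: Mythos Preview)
Your proposal is correct and, for part~(1), essentially identical to the paper's approach: both reduce to \cite{ABPTV} via the conjugacy of split Cartan involutions, invoking Matsumoto's formula $\jiao{\alpha^\vee(-1),\beta^\vee(-1)}=(-1)^{(\alpha^\vee,\beta^\vee)}$ for the bilinear form and the root-by-root determination of $q(\alpha^\vee(-1))$ (the paper cites Adams' theorem on metaplectic roots; your ``rank-one ingredient'' is the same content). The two identities then force \eqref{qform} on all of $\Lambda^\vee/2\Lambda^\vee$.

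For part~(2) you take a genuinely different, more self-contained route. The paper simply cites \cite[Lemma~3.12]{ABPTV}, whereas you compute the radical of $(,)\bmod 2$ directly: in the simply-laced case you identify $A_0$ with $(2P^\vee\cap\Lambda^\vee)/2\Lambda^\vee=\ker(T[2]\to T^{\ad}[2])=ZG[2]$, and for $G_2$ you check non-degeneracy of the Gram matrix mod~$2$. This buys you an elementary argument free of the external reference, at the cost of writing out the lattice computation; the paper's citation is shorter but opaque. One small remark: your parenthetical ``(types $A_1,D_{2n},E_7,E_8$)'' under-lists the simply-laced types covered by the lemma---your argument works verbatim for all of $A_n,D_n,E_6,E_7,E_8$, not just those with $-1\in W$.
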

\begin{proof}
(1) Our pairing $(\cdot,\cdot)$ on $\Lambda^{\vee}$ is the same one as in \cite[Section 3]{ABPTV}. The symplectic form $\jiao{\cdot,\cdot}$ is determined by Matsumoto, see \cite[Equation (3.3)]{ABPTV}:
\begin{equation}\label{comm}
\jiao{\alpha^{\vee}(-1),\beta^{\vee}(-1)}=(-1)^{(\alpha^{\vee},\beta^{\vee})}, \textup{ for two coroots }\alpha^{\vee},\beta^{\vee}.
\end{equation}
Also, \cite[Theorem 1.6]{Adams} says that a root $\alpha$ of $G$ is metaplectic (which is equivalent to saying $q(\alpha^{\vee}(-1))=-1\in\mu^{\ker}_{2}$) if and only if $\alpha^\vee$ is {\em not } a long coroot in type $B_n, C_n$ and $F_4$. Therefore, checking the coroot lengths under the pairing $(\cdot,\cdot)$, we find
\begin{equation}\label{qnorm}
q(\alpha^{\vee}(-1))=(-1)^{(\alpha^{\vee},\alpha^{\vee})/2} \textup{ for any coroot }\alpha^{\vee}.
\end{equation}
The two identities \eqref{comm} and \eqref{qnorm} together imply \eqref{qform}. 

(2) follows from \cite[Lemma 3.12]{ABPTV} in which the base field was $\RR$, but clearly $ZG(\RR)=ZG[2]$.
\end{proof}

\subsubsection{Odd (or genuine) representations of $\tilA(\kbar)$} A representation $V$ of $\tilA(\kbar)$ is called {\em odd} or {\em genuine}, if $\mu^{\ker}_{2}$ acts via the sign representation on $V$. Let $\Irr(\tilA(\kbar))_{\odd}$ be the set of irreducible odd representations of $\tilA(\kbar)$. 

Recall $A_{0}\subset A$ is the kernel of the pairing $\jiao{\cdot,\cdot}$. The pairing descends to a nondegenerate symplectic form on $A/A_{0}$. Let $\tilA_{0}\subset\tilA$ be the preimage of $A_{0}$. This is the center of $\tilA$. We have a central extension
\begin{equation*}
1\to \tilA_{0}(\kbar)\to\tilA(\kbar)\to A/A_{0}\to 1
\end{equation*}
whose commutator pairing $A/A_{0}\times A/A_{0}\to\mu^{\ker}_{2}$ is nondegenerate. Let $\tilA_{0}(\kbar)^{*}_{\odd}$ be the set of characters $\chi:\tilA_{0}(\kbar)\to\Qlbar^{\times}$ such that $\chi|_{\mu^{\ker}_{2}}$ is the sign representation. By Stone-von Neumann theorem, for every odd central character $\chi\in\tilA_0(\kbar)^*_\odd$, there is up to isomorphism a unique irreducible $\Qlbar$-representation $V_{\chi}$ of $\tilA(\kbar)$ with central character $\chi$. Therefore we have a canonical a bijection
\begin{equation*}
\Irr(\tilA(\kbar))_{\odd}\isom\tilA_{0}(\kbar)^{*}_{\odd}.
\end{equation*}
In particular, the number of irreducible objects in $\Rep(\tilA,\Qlbar)_{\odd}$ is $\#\tilA_{0}(\kbar)^{*}_{\odd}=\#A_{0}$.

We tabulate the structure of $\tilA_{0}$ for oddly-laced groups:
\begin{center}
\begin{tabular}{|c|c|c|}
\hline
Type of $G$ & $\tilA_{0}$ & $\#\Irr(\tilA(\kbar))_{\odd}$\\
\hline
$E_{8}, G_{2}$ & $\mu_{2}$ & 1\\
\hline
$A_{1}, E_{7}$ & $\mu_{4}$ & 2\\
\hline
$D_{4n}$ & $\mu_{2}^{3}$ & 4\\
\hline
$D_{4n+2}$ & $\mu_{4}\times\mu_{2}$ & 4\\
\hline
\end{tabular}
\end{center}

\subsection{Sheaves}\label{ss:sheaves}

\subsubsection{Equivariant derived category}\label{sss:Keq} Since $\mu_{2}^{\ker}=\ker(\tilK\to K)$ acts trivially on $\Bun^{+}$, an object in $D^{b}(\Bun)=D^{b}_{\tilK}(\Bun^{+})$ is in particular a complex $\calF$ on $\Bun^{+}$ together with an action of $\mu^{\ker}_{2}$. Therefore we have a decomposition
\begin{equation*}
D^{b}_{\tilK}(\Bun^{+})=D^{b}_{\tilK}(\Bun^{+})_{\even}\oplus D^{b}_{\tilK}(\Bun^{+})_{\odd}
\end{equation*}
according to whether $\mu^{\ker}_{2}$ acts trivially (the even part) or through the sign representation (the odd part). We then define
\begin{equation*}
D^b(\Bun)_{\odd}:=D^{b}_{\tilK}(\Bun^{+})_{\odd}.
\end{equation*}
Similarly, we define $D^{b}_{\tilK}(U)_{\odd}$.

\begin{theorem}\label{th:clean}
Assume that $G$ is oddly-laced and $-1\in W$ (i.e., $G$ is of type $A_{1},D_{2n},E_{7},E_{8}$ or $G_{2}$). Then the restriction functor
\begin{equation*}
j^*:D^b(\Bun)_{\odd}\to D^b_{\tilK}(U)_{\odd}
\end{equation*}
is an equivalence of categories with inverse equal to both $j_!$ and $j_*$.
\end{theorem}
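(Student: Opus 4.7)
The plan is to establish a stronger vanishing statement, namely that $D^b(Z)_{\odd} = 0$ on the closed complement $Z := \Bun \setminus (\tilK\backslash U)$. Given this, the standard distinguished triangle for the open-closed decomposition $(j, i)$ shows $j_!\calG \isom j_*\calG$ for every $\calG \in D^b_{\tilK}(U)_{\odd}$, and both functors then provide quasi-inverses to $j^*$ by adjunction. I factor $j$ as the composition of two open embeddings
\[
\tilK\backslash U \xrightarrow{j'} \tilK\backslash\fl_G \xrightarrow{j_1} \Bun,
\]
where $j_1$ is the pullback along $\Bun \to \Bun_G(\bP_0, \bP_\infty)$ of the affine open embedding $j_0: \BB K \hookrightarrow \Bun_G(\bP_0, \bP_\infty)$ from Lemma \ref{l:j0affine}, reducing the problem to the vanishing of $D^b(-)_{\odd}$ on the complements of $j_1$ and $j'$ separately.

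For the complement of $j_1$, the Birkhoff decomposition recalled in Section \ref{sss:birk} stratifies it by the non-trivial double cosets $\tilw \in W_K\backslash\tilW/W_K$. At each such stratum, the automorphism group in $\Bun$ of the representative bundle $\calP_{\tilw}$ is a central extension of $\Gamma_0 \cap \Ad(\tilw)\bP_\infty$ by the central $\mu_2^{\ker}$, and the vanishing of odd sheaves amounts to saying that $\mu_2^{\ker}$ lies in the commutator subgroup of this extension. Using Lemma \ref{l:A}(1) together with the oddly-laced hypothesis---which forces $q(\alpha^\vee(-1)) = -1$ for every coroot $\alpha^\vee$---I plan to identify inside $\Gamma_0 \cap \Ad(\tilw)\bP_\infty$ a pair of order-$2$ torus elements whose commutator pairing $\jiao{\alpha^\vee(-1),\beta^\vee(-1)} = (-1)^{(\alpha^\vee, \beta^\vee)}$ is nontrivial, so that their lifts to $\tilK$ produce the required commutator.

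A parallel argument handles the complement of $j'$. The non-open $K$-orbits on $\fl_G$, classified by Richardson--Springer via twisted involutions, have stabilizers $H \subset K$ strictly larger than the generic stabilizer $A = K \cap B_0$. By Lemma \ref{l:A}(2), for the five listed types the symplectic form $\jiao{\cdot,\cdot}$ on $A$ has radical exactly $A_0 = ZG[2]$, so the additional elements of $H$ (coming from the connected root subgroups that enlarge the stabilizer at non-generic points) pair nontrivially with appropriate elements of $A$. This exhibits two elements of the preimage $\tilH \subset \tilK$ whose commutator is the nontrivial element of $\mu_2^{\ker}$, forcing $\mu_2^{\ker} \subset [\tilH, \tilH]$ and hence the vanishing of odd $\tilK$-equivariant sheaves on the orbit.

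The main obstacle will be the explicit commutator check in the two parts above, particularly for the exceptional types $E_7$ and $E_8$, where the boundary stabilizers must be analyzed using detailed root-system combinatorics. A uniform formulation should come from the combined effect of the two assumptions: oddly-laced and $-1 \in W$ together make the quadratic form $q$ of Lemma \ref{l:A}(1) sufficiently non-degenerate that every strict enlargement of $A$ contains a non-isotropic pair. It is precisely when this non-degeneracy fails (e.g.\ for $B_n$, $C_n$, $F_4$, or $A_n$ with $n \geq 2$) that the cleanness statement would break down, which is why the theorem singles out exactly the five types $A_1, D_{2n}, E_7, E_8, G_2$.
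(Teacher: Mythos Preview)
Your overall architecture is right: reduce to vanishing on the complement, stratify by Birkhoff cosets and by $K$-orbits on $\fl_G$, and check vanishing stratum by stratum. But the mechanism you propose for the stratum-by-stratum vanishing is misidentified, and this is a genuine gap.

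The vanishing of $D^b_{\tilS}(\pt)_{\odd}$ for a stabilizer $\tilS$ is \emph{not} equivalent to $\mu_2^{\ker}$ lying in the commutator subgroup of $\tilS$. Equivariant $\ell$-adic local systems on a point are representations of $\pi_0(\tilS)$, so the correct criterion is that $\mu_2^{\ker}$ lies in the \emph{neutral component} $\tilS^\circ$. If instead $\mu_2^{\ker}$ maps nontrivially to $\pi_0(\tilS)$ but lands in its commutator subgroup, odd representations still exist (the regular representation of any finite group realizes every central character). Your proposed test---finding two order-$2$ torus elements $\alpha^\vee(-1),\beta^\vee(-1)$ with nontrivial commutator pairing---does not establish the neutral-component criterion; in fact it would tend to witness nonabelian structure in $\pi_0$, which goes the wrong way.

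What the paper actually does is different in both parts. For the non-open $K$-orbits on $\fl_G$, Springer's result gives $G^\tau\cap B = T^\tau \cdot N_\tau$ with $T^\tau$ a \emph{positive-dimensional} torus whenever $[\tau]\neq -1$; one then finds a $\tau$-invariant coroot $\alpha^\vee$ with $\alpha^\vee(\Gm)\subset T^\tau$ that does \emph{not} lift to $\tilK$, so the preimage of $\alpha^\vee(\Gm)$ is a connected $\Gm$ containing $\mu_2^{\ker}$. (The oddly-laced hypothesis enters here: one needs a coroot whose image under the quotient $\xcoch(T)\to\xcoch(T)/\ZZ\Phi_K^\vee\cong\ZZ/2\ZZ$ is nontrivial, and this uses the specific structure of minuscule coweights in type $A$/$D$ factors of $K$.) For the Birkhoff strata with $\tilw\neq 1$, there is an additional reduction you are missing: analyzing the image of $\ev_1$ one finds $G(\tilw)=N_\lambda(L_\lambda\cap K)$ for a \emph{proper} Levi $L_\lambda$, and the problem reduces to $D^b_{\wt{L_\lambda\cap K}}(\fl_{L_\lambda})_{\odd}=0$; since $L_\lambda$ is proper, the involution $[\tau]$ in $W_{L_\lambda}$ can never equal $-1\in W$, and the big-cell argument applies. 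Your proposal omits this Levi reduction entirely and instead attempts a direct commutator check on $\Gamma_0\cap\Ad(\tilw)\bP_\infty$, which would not give the needed conclusion.
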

The proof of the theorem will occupy Section \ref{ss:pfclean}. If $G$ is of type $B_{n},C_{n}$ or $F_{4}$, the statement in the above theorem does not hold. 

{\em For the rest of the paper, we assume $G$ is of type $A_{1},D_{2n},E_{7},E_{8}$ or $G_{2}$.} 

Next we study the category $\Loc_{\tilK}(U)_{\odd}$, the abelian category of $\Qlbar$-local systems on $U$ which are equivariant under $\tilK$ and on which $\mu^{\ker}_{2}$ acts by the sign representation.

\begin{lemma}\label{l:Gammarep}
Consider the pro-finite group $\Gamma=\tilA(\kbar)\rtimes\Gk$, where $\Gk$ acts in the usual way on the $\kbar$-points of the group scheme $\tilA$. Restriction to the point $u_{0}$ (which was fixed in Section \ref{sss:basepoint}) gives an equivalence of tensor categories
\begin{equation*}
u_0^*:\Loc_{\tilK}(U)_{\odd}\isom\Rep_{\cont}(\Gamma,\Qlbar)_{\odd}
\end{equation*}
where the right side denotes continuous finite-dimensional $\Qlbar$-representations of $\Gamma$ on which $\mu^{\ker}_{2}$ acts by the sign representation.  
\end{lemma}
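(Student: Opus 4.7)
My plan is to identify $U$ as a homogeneous space for $\tilK$ with stabilizer $\tilA$, apply the standard equivalence between equivariant local systems on a quotient and representations of the stabilizer, and finally translate representations of the finite étale group scheme $\tilA$ into continuous representations of $\Gamma$. By construction $U$ is a single $K$-orbit in $\fl_{G}$ and the $K$-stabilizer of $u_{0}$ is $A$; since $\tilK$ acts on $U$ through the central isogeny $\tilK \twoheadrightarrow K$ and $\tilA \subset \tilK$ is by definition the preimage of $A$, the orbit map induces a $\tilK$-equivariant isomorphism $U \isom \tilK/\tilA$.

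Next I would invoke the general fact that for a connected smooth affine $k$-group $H$ and a closed subgroup scheme $H' \subset H$, pullback followed by restriction to the identity coset is an equivalence of tensor categories between $H$-equivariant $\Qlbar$-local systems on $H/H'$ and finite-dimensional $\Qlbar$-representations of $H'$ viewed as a $k$-group scheme; stack-theoretically this just reflects the identification $[(H/H')/H] \simeq \BB H'$. Applied to $H = \tilK$ and $H' = \tilA$, this yields a tensor equivalence
\[
u_{0}^{*}: \Loc_{\tilK}(U) \isom \Rep(\tilA, \Qlbar),
\]
where the right-hand side denotes finite-dimensional $\Qlbar$-representations of $\tilA$ as a $k$-group scheme.

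Third, the hypothesis $\textup{char}(k) \neq 2$ makes $\mu^{\ker}_{2}$ and $A \cong T_{0}[2]$ étale, hence so is $\tilA$. A finite-dimensional $\Qlbar$-representation of the étale $k$-group scheme $\tilA$ is therefore the same datum as a finite-dimensional $\Qlbar$-vector space $V$ equipped with an action of the abstract finite group $\tilA(\kbar)$ and a continuous action of $\Gk$ satisfying $\sigma(a \cdot v) = \sigma(a) \cdot \sigma(v)$ for all $\sigma \in \Gk$, $a \in \tilA(\kbar)$, $v \in V$; this is precisely a continuous representation of $\Gamma = \tilA(\kbar) \rtimes \Gk$. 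Passing to odd parts on both sides amounts to imposing the same condition --- that the central $\mu^{\ker}_{2}$ act by the sign character --- because the $\mu^{\ker}_{2}$-action on the fiber at $u_{0}$ is preserved by $u_{0}^{*}$.

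The one nontrivial point, and the main (mild) obstacle, is the second step when $k$ is not algebraically closed, where one must check that the Galois descent data of a $\tilK$-equivariant local system reassembles correctly with the stabilizer action into the semidirect product. I would handle this either by passing through the classifying stack $\BB \tilA$, where the descent is automatic, or by directly observing that pullback along the $k$-rational point $u_{0}$ produces a continuous $\Gk$-action on the fiber intertwining with the $\tilA(\kbar)$-action via the canonical $\Gk$-action on $\tilA(\kbar)$.
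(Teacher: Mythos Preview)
Your proposal is correct and follows essentially the same approach as the paper: first reduce $\Loc_{\tilK}(U)$ to $\Loc_{\tilA}(\Spec k)$ via the transitive action of $\tilK$ on $U$ with stabilizer $\tilA$, then unpack an $\tilA$-equivariant local system on $\Spec k$ as a $\Gk$-module $V$ together with a compatible $\tilA(\kbar)$-action, i.e.\ a continuous $\Gamma$-representation. The paper handles your ``one nontrivial point'' exactly as in your second suggestion, by writing out the compatibility $\gamma\cdot\mu\cdot v=\gamma(\mu)\cdot\gamma\cdot v$ directly.
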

\begin{proof}
Since $\tilK$ acts on $U$ transitively with stabilizer $\tilA$ at $u_{0}$, pullback to $\{u_{0}\}$ gives an equivalence of tensor categories
\begin{equation*}
u_0^*:\Loc_{\tilK}(U)_{\odd}\isom\Loc_{\tilA}(\Spec k)_{\odd}
\end{equation*}
where the subscript $\odd$ on the right side refers to the action of $\mu^{\ker}_{2}\subset\tilA$.

An $\tilA$-equivariant local system on $\Spec k$ is first of all a lisse $\Qlbar$-sheaf on $\Spec k$, which is the same as a continuous representation $V$ of $\Gk$. The $\tilA$-equivariant structure becomes an $\tilA(\kbar)$-action on $V$. The two actions are compatible in the sense that
\begin{equation*}
\gamma\cdot\mu\cdot v=\gamma(\mu)\cdot\gamma\cdot v,\textup{ for }\gamma\in\Gk,\mu\in\tilA(\kbar),v\in V.
\end{equation*}
Therefore these two actions together give a continuous action of $\Gamma=\tilA(\kbar)\rtimes\Gk$ on $V$. The oddness condition on the local system is equivalent to that the action of $\mu^{\ker}_{2}\subset\Gamma$ is via the sign representation. 
\end{proof}

\subsubsection{Representations of $\Gamma$}\label{sss:RepGamma} Let $V_{\chi}$ be the irreducible odd representation of $\tilA(\kbar)$ with central character $\chi:\tilA_{0}(\kbar)\to\Qlbar^{\times}$. 

When $G$ is of type $D_{2n}, E_{8}$ or $G_{2}$, the group scheme $\tilA_{0}$ is a product of $\mu_{2}$'s, hence $\Gk$ acts trivially on $\tilA_{0}(\kbar)$ and its group of characters. The $\tilA(\kbar)$-module $V_\chi$ then extends to a $\Gamma=\tilA(\kbar)\rtimes\Gk$-module. In fact, $\Gk$ acts on $\tilA(\kbar)$ via some finite quotient $\Gammabar$. For any $\gammabar\in\Gammabar$, let $\leftexp{\gammabar}{V}_\chi$ is the same space $V_\chi$ with $a\in\tilA(\kbar)$ acting as $\gammabar(a)$ in the original action. Since the central characters of $\leftexp{\gammabar}{V}_\chi$ is still $\chi$, there is an $\tilA(\kbar)$-isomorphism $\phi_{\gammabar}:V_\chi\isom\leftexp{\gammabar}{V}_\chi$, unique up to scalar. The obstruction for making the collection $\{\phi_{\gammabar}\}$ into an action of $\Gammabar$ on $V_\chi$ is a class in $\upH^{1}(\Gammabar,\Qlbar^{\times})$, which is trivial since $\Qlbar^{\times}$ is divisible. Therefore we may extend the $\tilA(\kbar)$-action on $V$ to an $\tilA(\kbar)\rtimes\Gammabar$-action, hence a $\Gamma$-action.

When $G$ is of type $A_{1}, D_{4n+2}, E_{7}$, $\mu^{\ker}_2$ is contained in some $\mu_{4}\subset\tilA_0$. Therefore $\Gk$ acts trivially on the odd central characters if only if $\sqrt{-1}\in k$. We henceforth make the assumption:
\begin{equation}\label{ass:i}
\text{\em When $G$ is of type $A_{1}, D_{4n+2}$ or $E_{7}$, we assume that $\sqrt{-1}\in k$.}
\end{equation}
When this is assumed, $\tilA_{0}$ is a discrete group scheme over $k$, and we do not need to distinguish between $\tilA_{0}(k), \tilA_{0}(\kbar)$ and $\tilA_{0}$. The same argument as before shows that we can extend the $\tilA(\kbar)$-action on $V_\chi$ to a $\Gamma$-action.

In all cases, assuming \eqref{ass:i}, the $\Gamma$-module $V_{\chi}$ gives a geometrically irreducible object $\calF_{\chi}\in\Loc_{\tilK}(U,\Qlbar)_{\odd}$ by Lemma \ref{l:Gammarep}. 

\begin{remark} Above we have chosen a $\Gamma$-module structure on $V_{\chi}$ extending the $\tilA(\kbar)$-module structure. A different choice of the extension differs from the chosen one by twisting a character of $\Gk$, and the resulting  $\calF_{\chi}$ also changes by twisting the same character of $\Gk$.
\end{remark}

\subsubsection{Variant of cleanness}\label{sss:varclean} Let $S$ be a scheme over $k$, we may similarly consider $D^{b}(\Bun\times S,\Qlbar)_{\odd}$. The cleanness theorem \ref{th:clean} implies that the functor $(u_{0}\times\id_{S})^{*}$ also induces an equivalence of categories
\begin{equation*}
(u_{0}\times\id_{S})^{*}:D^{b}(\Bun\times S)_{\odd}\isom D^{b}_{\tilK}(U\times S)_{\odd}\isom D^{b}_{\tilA}(S)_{\odd}
\end{equation*}
where $\tilA$ acts trivially on $S$ in the last expression. 

Assuming \eqref{ass:i}, we may decompose an object in the $D^{b}_{\tilA}(S)_{\odd}$ according to the action of $\tilA_{0}$. This way we get a decomposition for every object $\calH\in D^{b}(\Bun\times S)_{\odd}$:
\begin{equation}\label{chidecomp}
(u_{0}\times\id)^{*}\calH=\bigoplus_{\chi\in\tilA^*_{0,\odd}}(u_{0}\times\id)^{*}\calH_{\chi}.
\end{equation}

\subsubsection{Invariants under a finite group scheme}\label{sss:inv} For every object $\calF\in\Perv_{\tilA}(S)$, we may talk about its sub-object $\calF^{\tilA}\in\Perv(S)$. In fact, Let $k'$ be a finite Galois extension of $k$ over which $\tilA$ becomes a discrete group scheme. Let $\calF_{k'}$ be the pull back of $\calF$ to $S_{k'}=S\otimes_{k}k'$. We first take the perverse subsheaf of $\tilA(k')$-invariants $\calF_{k'}^{\tilA(k')}\subset\calF_{k'}$. The descent datum for $\calF_{k'}$ (descending from $S_{k'}$ to $S$) restricts to a descent datum of $\calF_{k'}^{\tilA(k')}$. Since perverse sheaves satisfy \'etale descent, $\calF_{k'}^{\tilA(k')}$ descends to a perverse sheaf $\calF^{\tilA}$ on $S$.

Recall that for each $\chi$, we have fixed an irreducible odd $\Gamma$-module $V_{\chi}$, which gives rise to an irreducible object $\calF_{\chi}\in\Loc_{\tilK}(U)_{\odd}$. The following lemma is easy to prove by descent argument.

\begin{lemma}\label{l:writeE} Assume \eqref{ass:i}. Let $\calH\in D^{b}(\Bun\times S)_{\odd}$ such that $(u_{0}\times\id_{S})^{*}\calH$ is concentrated in a single perverse degree, then we have a canonical decomposition
\begin{equation*}
\calH\cong \bigoplus_{\chi\in\tilA^{*}_{0,\odd}}(j_{!}\calF_{\chi})\boxtimes(V^{*}_{\chi}\otimes(u_{0}\times\id_{S})^{*}\calH_{\chi})^{\tilA}.
\end{equation*}
\end{lemma}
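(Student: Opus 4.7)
The plan is to transport the isotypic decomposition of an $\tilA$-equivariant perverse sheaf across the cleanness equivalence. By the variant of Theorem~\ref{th:clean} in Section~\ref{sss:varclean}, $(u_0 \times \id_S)^*$ is an equivalence $D^b(\Bun \times S)_{\odd} \isom D^b_{\tilA}(S)_{\odd}$ (with trivial $\tilA$-action on $S$), and it sends $j_!\calF_\chi \boxtimes M$ to $V_\chi \boxtimes M$ for any $M \in D^b(S)$, since $u_0^*\calF_\chi = V_\chi$ by the construction in Section~\ref{sss:RepGamma}. Writing $\calH' := (u_0 \times \id_S)^* \calH$ and $\calH'_\chi := (u_0 \times \id_S)^* \calH_\chi$, it therefore suffices to construct a canonical isomorphism
$$\calH' \;\cong\; \bigoplus_\chi V_\chi \boxtimes (V_\chi^* \otimes \calH'_\chi)^{\tilA}$$
inside $D^b_{\tilA}(S)_{\odd}$.

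I would first establish this over $\kbar$. The hypothesis that $\calH'$ is concentrated in a single perverse degree makes $\calH'_{\kbar}$, up to shift, an $\tilA(\kbar)$-equivariant perverse sheaf on $S_{\kbar}$, and the central-character decomposition \eqref{chidecomp} writes it as $\bigoplus_\chi \calH'_{\chi,\kbar}$. On each $\chi$-component, Stone--von Neumann uniqueness forces $\calH'_{\chi,\kbar}$ to be, stalkwise, a sum of copies of $V_\chi$; the corresponding multiplicity perverse sheaf is canonically identified with $\Hom_{\tilA(\kbar)}(V_\chi, \calH'_{\chi,\kbar}) = (V_\chi^* \otimes \calH'_{\chi,\kbar})^{\tilA(\kbar)}$. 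This gives the desired formula over $\kbar$.

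The main obstacle is descending the isomorphism to $k$, which is where assumption~\eqref{ass:i} enters. Under \eqref{ass:i}, $\tilA_0$ is a discrete group scheme over $k$, so each odd central character $\chi$ is individually $\Gk$-stable and the decomposition \eqref{chidecomp} is defined over $k$. Having fixed a $\Gamma$-structure on $V_\chi$ in Section~\ref{sss:RepGamma}, the functor $V_\chi \boxtimes (-)$ lands in $D^b_{\tilA}(S)_{\odd}$ over $k$, while the invariants functor $(V_\chi^* \otimes -)^{\tilA}$ is defined over $k$ via the descent procedure of Section~\ref{sss:inv}. The Stone--von Neumann isomorphism over $\kbar$ is the unique $\tilA(\kbar)$-equivariant comparison map up to scalar; the chosen $\Gamma$-structure on $V_\chi$ rigidifies this scalar so that the isomorphism becomes $\Gk$-equivariant and descends to $k$. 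Transporting back through the cleanness equivalence then yields the stated formula.
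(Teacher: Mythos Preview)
Your proposal is correct and is precisely the ``descent argument'' the paper alludes to (the paper gives no further details beyond that phrase). One minor clarification: the comparison map $V_\chi \otimes (V_\chi^* \otimes \calH'_\chi)^{\tilA} \to \calH'_\chi$ is simply the evaluation map, which is completely canonical once $V_\chi$ is fixed---there is no scalar ambiguity to rigidify; the role of the $\Gamma$-structure on $V_\chi$ is just to make this evaluation map defined over $k$, after which its bijectivity is checked over $\kbar$ via Stone--von Neumann as you describe.
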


\subsection{Proof of Theorem \ref{th:clean}}\label{ss:pfclean}
To prove the theorem, it suffices to show that for any $\calF\in D^{b}(\Bun)_{\odd}$, the stalks of $\calF$ outside the open substack $\tilK\backslash U$ are zero. This statement being geometric, we may assume $k$ is algebraically closed. The rest of this subsection is devoted to the proof of this vanishing statement.

\subsubsection{A result of Springer}\label{sss:Spr}
Let $c\in ZG$. Let $\Inv_{c}(G)=\{g\in G|g^2=c\}$ be consist of ``$c$-involutions'' in $G$. 

Fix $\tau\in\Inv_{c}(G)\cap T$. We write the adjoint action of $\tau$ on $G$ as $\leftexp{\tau}{g}:=\tau g\tau^-1$. We recall a result of Springer \cite[Lemma 4.1(i)]{Sp}. Let $S'=\{x\in G|x\leftexp{\tau}{x}=1\}$, then we have an embedding $G/G^{\tau}\hookrightarrow S'$ by $g\mapsto g\leftexp{\tau}{g}^{-1}$. Under this embedding, the left translation of $G$ on $G/G^{\tau}$ becomes the following action of $G$ on $S'$: $g*x=gx\leftexp{\tau}{g}^{-1}$. Springer's results says that every $B$-orbit on $S'$ (through the $*$-action) intersects $N_{G}(T)$. 

Right multiplication by $\tau$ gives an isomorphism $S'\isom\Inv_{c}(G)$, which intertwines the $*$-action of $G$ on $S'$ and the conjugation action of $G$ on $\Inv_{c}(G)$. Therefore we may reformulate Springer's result as: any element in $\Inv_{c}(G)$ is $B$-conjugate to an element in $N_{G}(T)\tau=N_{G}(T)$.

\subsubsection{Split Cartan involutions}\label{sss:Cartaninv}
Let $G$ be any semisimple group with Lie algebra $\frg$. E.Cartan proved an inequality: for any involution $\tau\in\Aut(G)$, we have
\begin{equation*}
\dim\frg^{\tau}\geq\#\Phi_{G}/2.
\end{equation*}
The equality holds if and only if $\tau$ is conjugate to a lifting of $-1\in W\rtimes\Out(G)$ to $N_{G^{\ad}}(T^{\ad})\rtimes\Out(G)$. If the equality holds, we say $\tau$ is a {\em split Cartan involution} of $G$.

Suppose the involution $\tau\in N_{G^{\ad}}(T^{\ad})$, then $\tau$ acts on $\frt=\Lie T$ and permutes the roots of $G$. Let $\Phi^{\tau}_G\subset\Phi_{G}$ be those roots which are fixed by $\tau$. We form the Levi subalgebra
\begin{equation*}
\frl=\frt\oplus\left(\bigoplus_{\gamma\in\Phi^{\tau}_G}\frg_{\gamma}\right).
\end{equation*}
Let $L$ be the corresponding Levi subgroup of $G$. Let $\frl^{\der}\subset\frl$ be the derived algebra, which is a semisimple Lie algebra which root system $\Phi^\tau_G$.

We calculate the dimension of $\frg^{\tau}$. For those roots $\gamma$ which are not fixed by $\tau$, $\frg_{\gamma}\oplus\frg_{\tau(\gamma)}$ has a $\tau$-fixed line. Therefore
\begin{equation*}
\dim\frg^{\tau}=\frac{\#\Phi-\#\Phi^{\tau}}{2}+\dim\frl^{\der,\tau}+\dim\frt^{\tau}-\rank\frl^{\der}\geq\frac{\#\Phi-\#\Phi^{\tau}}{2}+\frac{\#\Phi^{\tau}}{2}=\frac{\#\Phi}{2}.
\end{equation*}
 
Now suppose $\tau$ is a split Cartan involution, the above inequality is an equality. In particular, we have $\dim\frl^{\der,\tau}=\#\Phi^{\tau}_G/2$ and $\dim\frt^{\tau}=\rank\frl^{\der}$. This means
\begin{itemize}
\item $\frt^{\tau}$ is the Cartan subalgebra of $\frl^{\der}$, equivalently $\Phi^{\tau}_G$ span $\frt^{*,\tau}$;
\item $\tau$ restricts to a split Cartan involution on $L^{\der}$.
\end{itemize}

\subsubsection{The big cell}\label{sss:bigcell} We first restrict $\calF$ to the open subset $\tilK\backslash\fl_{G}\subset\Bun$. Let $O(x)=KxB/B$ be a $K$-orbit on $\fl_{G}$ which is not open. The goal is to show that $\calF|_{O(x)}=0$. 

Choose any lifting $\tau_{0}$ of $\rho^{\vee}(-1)\in G^{\ad}$ to $G$, then $c:=\tau_{0}^{2}$ is in the center of $G$, and $K=G^{\tau_{0}}$. Consider the $c$-involution $\tau=x^{-1}\tau_{0} x\in G$. By Springer's result reformulated in Section \ref{sss:Spr}, up to multiplying $x$ by an element in $B$ on the right, we may assume that $\tau\in N_{G}(T)$. Let $[\tau]\in W$ be the image of $\tau$, which satisfies $[\tau]^{2}=1\in W$. Then $O(x)$ is the open orbit if and only if $[\tau]=-1\in W$. Conjugating by $x^{-1}$ gives an isomorphism
\begin{equation*}
O(x)=K/K\cap xBx^{-1}\isom xKx^{-1}/xKx^{-1}\cap B=G^{\tau}/G^{\tau}\cap B.
\end{equation*}
which intertwines the $K$-action on $O(x)$ and the $G^{\tau}$-action on $G^{\tau}/G^{\tau}\cap B$. Let $\nu':\wt{G^{\tau}}\to G^{\tau}$ be the canonical double cover. Under this isomorphism, $\calF|_{O(x)}$ can be viewed as an object in $D^{b}_{\wt{G^{\tau}}}(G^{\tau}/G^{\tau}\cap B)_{\odd}$, where oddness refers to the action of $\ker(\nu')$, which we still denote by $\mu^{\ker}_2$ in the sequel. Since $\wt{G^{\tau}}$ acts on $G^{\tau}/G^{\tau}\cap B$ transitively, each cohomology sheaf of $\calF|_{O(x)}$ is a local system on $G^{\tau}/G^{\tau}\cap B$. Therefore it is enough to show that $\Loc_{\wt{G^{\tau}}}(G^{\tau}/G^{\tau}\cap B)_{\odd}=0$.

Springer \cite[Proposition 4.8]{Sp} also shows that $G^{\tau}\cap B$ is the product of $T^{\tau}$ and a unipotent group $N_{\tau}$ as schemes. Let $\wt{T^{\tau}}\subset\wt{G^{\tau}}$ be the preimage of $T^{\tau}$ in $\wt{G^{\tau}}$. Then
\begin{equation*}
\Loc_{\wt{G^{\tau}}}(G^{\tau}/T^{\tau}N_{\tau})_{\odd}\isom\Loc_{\wt{G^{\tau}}}(\wt{G^{\tau}}/\wt{T^{\tau}})_{\odd}=\Loc_{\wt{T^{\tau}}}(\Spec k)_{\odd}.
\end{equation*}
The category $\Loc_{\wt{T^{\tau}}}(\Spec k)$ is equivalent to the category of $\Qlbar$-representations of $\pi_{0}(\wt{T^{\tau}})$. Therefore, for $\Loc_{\wt{T^{\tau}}}(\Spec k)_{\odd}$ to be nonzero, $\mu^{\ker}_{2}$ must not lie in the neutral component of $\wt{T^{\tau}}$. However, we will show the opposite statement: when $[\tau]\neq-1\in W$, $\mu^{\ker}_2$ lies in the neutral component of $\wt{T^\tau}$. This will imply that $\calF|_{O(x)}=0$ whenever $O(x)$ is not the open orbit.

Since $[\tau]\neq-1\in W$, $\frt^{\tau}\neq0$ and $\Phi^{\tau}\neq\emptyset$ because they have to span $\frt^{\tau}$ by our discussion in Section \ref{sss:Cartaninv}. Therefore the Levi $L$ we introduced there is not a torus. We will show that there exists a coroot $\alpha^{\vee}$ of $G$, invariant under $\tau$, such that $\alpha^{\vee}:\Gm\to G^{\tau}$ does not lift to $\Gm\to\wt{G^{\tau}}$. This would imply that the preimage $\wt{\alpha^{\vee}(\Gm)}$ of $\alpha^{\vee}(\Gm)$ in $\wt{G^{\tau}}$ is again connected, hence $\mu^{\ker}_{2}\subset\wt{\alpha^{\vee}(\Gm)}$ lies in the neutral component of $\wt{T^{\tau}}$.

Let $L_{1}$ be a simple factor of $L$ which is not a torus. Let $\frl_{1}=\Lie L_{1}$. Since $\tau|_{L_{1}}$ is a split Cartan involution, in particular it is nontrivial, there exists a root $\alpha$ of $L_{1}$ which is not a root of $L_{1}^{\tau}$. Assume either $G$ is simply-laced or this $\alpha$ is a long root if $G$ is of type $G_2$. Give a grading of $\frg$ according to the adjoint action of the coroot $\alpha^{\vee}$, so that $\Ad(\alpha^{\vee}(s))$ acts on $\frg(d)$ by $s^{d}$. Since $\alpha$ is a long root of $G$, we have $\frg(d)=0$ for $|d|>2$, and that $\frg(2)=\frg_{\alpha}$. Since $\frg_{\alpha}\subset\frl_{1}$ while $\alpha$ is not a root of $L_{1}^{\tau}$, we conclude that $\frg^{\tau}\cap\frg(2)=0$, i.e, the action  of $\Ad(\alpha^{\vee})$ on $\frg^{\tau}$ has weights $-1,0,1$. Suppose $\alpha^\vee$ lifted to a cocharacter of $\wt{G^\tau}$, it would give an element in the coroot lattice of $K$ which is minuscule. By examining the possible simple factors of $K$ (which are of type $A$ or $D$), we conclude that no element in the coroot lattice of $K$ is minuscule. Therefore $\alpha^{\vee}$ is not liftable to $\wt{G^{\tau}}$. For simply-laced $G$, this already finishes the proof.

It remains to consider the case $G=G_{2}$, and $L$ is a Levi corresponding to a short root $\alpha$ of $G$. Using the grading given by $\alpha^{\vee}$, we have $\frg^{\tau}(3)\neq0$ and $\frg^{\tau}(1)\neq0$. However, $\wt{G^{\tau}}\cong\SL_{2}\times\SL_{2}$, and any homomorphism $\Gm\to\SL_{2}$ acts by even weights under the adjoint representation. Therefore $\alpha^{\vee}$ is not liftable to $\wt{G^{\tau}}$, and the $G_{2}$ case is also settled.

\subsubsection{Other cells} Recall the Birkhoff decomposition \eqref{Birkhoff}. Each double coset $\tilw\in W_{K}\backslash\tilW/W_{K}$ gives a $k$ point of $\Bun_{G}(\bP_{0},\bP_{\infty})$, whose stabilizer is $\Stab(\tilw)=\Gamma_{0}\cap \Ad(\tilw)\bP_{\infty}$. We have a canonical homomorphism by evaluating at $t=0$
\begin{equation*}
\ev_{0}:\Stab(\tilw)\subset\Gamma_{0}\to K_{0}=K.
\end{equation*}
Changing the $\bP_{0}$-level to $\wt{\bP_{0}}$-level, we get a similar Birkhoff decomposition for $\Bun_{G}(\wt{\bP_{0}},\bP_{\infty})$. The $\tilw\in\Bun_{G}(\wt{\bP_{0}},\bP_{\infty})$ has stabilizer
\begin{equation*}
\wt{\Stab}(\tilw):=\Stab(\tilw)\times_{K}\tilK.
\end{equation*} 
Let $\Bun_{G}(\wt{\bP_{0}},\bI_{1},\bP_{\infty})_{\tilw}$ be the preimage of the stratum $\{\tilw\}/\wt{\Stab}(\tilw)$ in $\Bun_{G}(\wt{\bP_{0}},\bI_{1},\bP_{\infty})$. The open stratum corresponding to the unit coset in $W_{K}\backslash\tilW/W_{K}$ has been dealt with in Section \ref{sss:bigcell}. Our goal is to show that $D^{b}(\Bun_{G}(\wt{\bP_{0}},\bI_{1},\bP_{\infty})_{\tilw})_{\odd}=0$ whenever $\tilw$ is not the unit coset in $W_{K}\backslash\tilW/W_{K}$. This together with the discussion in Section \ref{sss:bigcell} finishes the proof of Theorem \ref{th:clean}.

By the explicit description of the $G$-torsor $\calP_{\tilw}$ corresponding to $\tilw\in\Bun_{G}(\wt{\bP_{0}},\bP_{\infty})$ in Section \ref{sss:birk}, $\calP_{\tilw}$ has a tautological trivialization over $\Gm$. The fiber $\tpi^{-1}(\tilw)$ consists of all Borel reductions of $\calP_{\tilw}$ at $t=1$, hence canonically identified with the flag variety $\fl_{G}$.  The stabilizer $\Stab(\tilw)$ is a subgroup of $\Gamma_0\subset G[t,t^{-1}]$, and it acts on the fiber $\tpi^{-1}(\tilw)=\fl_{G}$ via the evaluation map at $t=1$:
\begin{equation*}
\ev_{1}:\Stab(\tilw)=\Gamma_{0}\cap\Ad(\tilw)\bP_{\infty}\subset\Gamma_{0}\subset G[t,t^{-1}]\xrightarrow{\ev(t=1)}G.
\end{equation*}
Therefore we have
\begin{equation*}
\Bun_{G}(\wt{\bP_{0}},\bI_{1},\bP_{\infty})_{\tilw}\cong\wt{\Stab}(\tilw)\backslash\fl_{G}.
\end{equation*}
where $\wt{\Stab}(\tilw)$ acts on $\fl_{G}$ via the quotient $\Stab(\tilw)$ and the homomorphism $\ev_{1}$. We conclude that
\begin{equation*}
D^{b}(\Bun_{G}(\wt{\bP_{0}},\bI_{1},\bP_{\infty})_{\tilw})_{\odd}\cong D^{b}_{\wt{\Stab}(\tilw)}(\fl_{G})_{\odd}
\end{equation*}
We need to show that the latter is zero.

Let $G(\tilw)\subset G$ be the image of $\ev_{1}$. This is a subgroup of $G$ containing $T$. To describe this group more explicitly, we need to know which roots of $G$ appear in $G(\tilw)$.

Let $x$ be the vertex in the building of $L_\infty G$ corresponding to the parahoric $\bP_{\infty}$. Then $x$ lies in the apartment $\frA(T)$ corresponding to the torus $T$, which is a torsor under $\xcoch(T)\otimes\RR$. The affine Weyl group $\tilW$ acts on $\frA(T)$ by affine transformations. The stabilizer of $x$ is precisely the Weyl group of $W_{K}$ (here we use the fact that $G$ is simply connected). Affine roots of $L_\infty G$ are affine functions on $\frA(T)$. Affine roots $\alpha$ appearing in $\Gamma_{0}$ are those such that $\alpha(x)\geq0$; affine roots $\alpha$ appearing in $\Ad(\tilw)\bP_{\infty}$ are those such that $\alpha(\tilw x)\leq0$. Therefore affine roots appearing in $\Stab(\tilw)$ are those such that $\alpha(x)\geq0$ and $\alpha(\tilw x)\leq0$. Write such an affine root as $\alpha=\gamma+n\delta$ where $\gamma\in\Phi_G\cup\{0\}$, $\delta$ is the imaginary root of $L_\infty G$ and $n\in\ZZ$. Evaluating at $t=1$ gives $\gamma$ as a root appearing in $G(\tilw)$. In conclusion, we see that $\gamma\in\Phi_G$ appears in $G(\tilw)$ if and only if there exists $n\in\ZZ$ such that $\gamma(x)+n\geq0$ and $\gamma(\tilw x)+n\leq0$. Since $x=\frac{1}{2}\rho^\vee$, hence $\gamma(x),\gamma(\tilw x)\in\frac{1}{2}\ZZ$, therefore such an integer $n$ exists if 
\begin{itemize}
\item either $\jiao{\gamma,x-\tilw x}>0$;
\item or $\jiao{\gamma,x-\tilw x}=0$ and $\gamma(x)\in\ZZ$.
\end{itemize} 
Note that $\lambda=x-\tilw x$ is a well defined vector in $\xcoch(T)_{\QQ}$, and $\jiao{\gamma,x-\tilw x}$ is the pairing between $\xch(T)_{\QQ}$ and $\xcoch(T)_{\QQ}$. When we say $\gamma(x)\in\ZZ$ we view $\gamma$ as an affine function on $\frA(T)$. Note that $\gamma(x)\in\ZZ$ if and only if $\gamma\in\Phi_{K}\cup\{0\}$, therefore the roots of $G(\tilw)$ are
\begin{equation*}
\{\gamma\in\Phi_G|\jiao{\gamma,\lambda}\geq0, \textup{ and, when equality holds, } \gamma\in\Phi_{K}\}.
\end{equation*}
Let $P_{\lambda}\supset G(\tilw)$ be the parabolic subgroup of $G$ whose roots consist of all $\gamma$ such that $\jiao{\gamma,\lambda}\geq0$. Then we have a Levi decomposition $P_{\lambda}=N_{\lambda}L_{\lambda}$ where $N_{\lambda}$ is the unipotent radical of $P_{\lambda}$ and $L_{\lambda}$ is a Levi subgroup of $P_{\lambda}$ whose roots are $\{\gamma\in\Phi_G|\jiao{\gamma,\lambda}=0\}$. Then $G(\tilw)=N_{\lambda}(L_{\lambda}\cap K)$. Let $W_{\lambda}$ be the Weyl group of $L_{\lambda}$.

Notice that $L_{\lambda}\cap K\subset G(\tilw)$ has a canonical lifting to a subgroup of $\Stab(\tilw)$: this is the subgroup generated by $T$ and containing real affine roots $\alpha$ of the form $\alpha(x)=\alpha(\tilw x)=0$. In particular, we can form $\wt{L_{\lambda}\cap K}=(L_{\lambda}\cap K)\times_{K}\tilK$, which is both a subgroup and a quotient group of $\wt{\Stab}(\tilw)$.

Next we analyze the $G(\tilw)$-orbits on $\fl_{G}$.  Fix a Borel $B$ containing $T$, according to the Bruhat decomposition, we have
\begin{equation*}
G(\tilw)\backslash\fl_{G}=\bigsqcup_{v\in W_{\lambda}\backslash W}G(\tilw)\backslash P_{\lambda}vB/B\twoheadleftarrow\bigsqcup_{v\in W_{\lambda}\backslash W}(L_{\lambda}\cap K)\backslash L_{\lambda}vB/B=\bigsqcup_{v\in W_{\lambda}\backslash W}L_{\lambda}\cap K\backslash\fl_{L_{\lambda}}.
\end{equation*}
The arrow above is a map between stacks which is bijective on the level of points. The last equality uses the fact that $L_{\lambda}vB/B=L_{\lambda}/L_{\lambda}\cap\Ad(v)B$ is the flag variety of $L_{\lambda}$. Therefore, in order to show that $D^{b}_{\wt{\Stab}(\tilw)}(\fl_{G})_{\odd}=0$, it suffices to show that $D^{b}_{\wt{L_{\lambda}\cap K}}(\fl_{L_{\lambda}})_{\odd}=0$.  Note that for $\tilw\notin W_{K}$, $\lambda=x-\tilw x\neq0$, therefore $L_{\lambda}$ is a {\em proper} Levi subgroup of $G$. So our goal becomes to show that $D^{b}_{\wt{L\cap K}}(\fl_{L})_{\odd}=0$ for all proper Levi subgroups $L\subset G$ containing $T$.

Since $K=G^{\tau_0}$ where $\tau_0$ is a lifting of $\rho^\vee(-1)$ which also acts on $L$,  we have $L\cap K=L^{\tau_0}$. Fix a Borel $B_{L}\subset L$ containing $T$. Consider the $L^{\tau_0}$-orbit $O(x)=L^{\tau_0}xB_{L}/B_{L}\subset\fl_{L}$. By Springer's lemma again, up to right multiplying $x$ by an element in $B_{L}$, we may assume that the involution $\tau=x\tau_0x^{-1}$ lies in $N_{L}(T)$. By the same argument as in Section \ref{sss:bigcell}, we reduce to show that $\Loc_{\wt{T^{\tau}}}(\pt)_{\odd}=0$. Here $\wt{T^{\tau}}=T^{\tau}\times_{K}\tilK$. In Section \ref{sss:bigcell}, we have shown that $D^{b}_{\wt{T^{\tau}}}(\pt)_{\odd}=0$ provided $[\tau]\neq-1\in W$. In our case, since $\tau\in N_{L}(T)$ and $L$ is proper, $[\tau]$ can never be $-1$. This finishes the proof.


\section{Construction of the motives}\label{s:cons}

\subsection{Geometric Hecke operators}\label{ss:geomHk}

\subsubsection{The geometric Satake equivalence}\label{sss:Satake} We briefly review the main result of \cite{MV}. We have defined the loop group $LG$ and its parahoric subgroup $L^{+}G$ in Section \ref{sss:par}. Let $\Aut_{\calO}$ be the group scheme of formal change of variables: $R\mapsto$ \{continuous (with respect to the $t$-adic topology) $R$-linear automorphisms of $R[[t]]$\}.

We usually denote the quotient $LG/L^{+}G$ by $\Gr$, the {\em affine Grassmannian}. This is an ind-scheme which is a union of projective varieties of increasing dimension. Define
\begin{equation*}
\Sat^{\geom}=\Perv_{\Aut_{\calO,\kbar}}(L^{+}G_{\kbar}\backslash LG_{\kbar}/L^{+}G_{\kbar},\Ql)=\Perv_{L^{+}G_{\kbar}\rtimes\Aut_{\calO,\kbar}}(\Gr_{\kbar},\Ql).
\end{equation*}
The superscript $^{\geom}$ is to indicate that sheaves in $\Sat^{\geom}$ are over the geometric fiber $\Gr_{\kbar}$. As was shown in \cite[Appendix]{MV}, $\Sat^{\geom}$ is in fact equivalent to the category of perverse sheaves on $\Gr_{\kbar}$ which are constant along $L^{+}G_{\kbar}$-orbits. This category is equipped with a convolution product $*:\Sat^{\geom}\times\Sat^{\geom}\to\Sat^{\geom}$ which makes it a Tannakian category over $\Ql$. The global section functor
\begin{eqnarray*}
\upH^*:\Sat^{\geom}&\to&\Vec_{\Ql}\\
\calK&\mapsto&\cohog{*}{\Gr_{\kbar},\calK}
\end{eqnarray*}
is a fiber functor. The Tannakian group $\Aut^\otimes(\textup{H}^*)$ is isomorphic to the Langland dual group $\hatG$ of $G$. Here $\hatG$ is a split reductive group over $\Ql$. All this is proved in \cite[Section 5-7]{MV}.

Let $\tSat$ be the full subcategory of perverse sheaves on $\Gr$ which belong to $\Sat^{\geom}$ after pulling back to $\Gr_{\kbar}$. Then $\tSat$ also has a convolution product. For each dominant coweight $\lambda\in\xcoch(T)^{\dom}$, the $L^+G$-orbit containing $t^\lambda\in\Gr$ is denoted by $\Gr_{\lambda}$ and its closure is denoted by $\Gr_{\leq\lambda}$. The dimension of $\Gr_\lambda$ is $\jiao{2\rho,\lambda}$. Let $\IC_\lambda\in\tSat$ be the (normalized) intersection cohomology sheaf of $\Gr_{\leq\lambda}$:
\begin{equation*}
\IC_\lambda:=j_{\lambda,!*}\Ql[\jiao{2\rho,\lambda}](\jiao{\rho,\lambda}).
\end{equation*}
where $j_{\lambda}:\Gr_\lambda\hookrightarrow{\Gr}$ is the inclusion.

Let $\Sat\subset\tSat$ be the full subcategory consisting of finite direct sums of  $\IC_\lambda$'s for the various $\lambda\in\xcoch(T)^{\dom}$. We claim that $\Sat$ is closed under convolution. In \cite[\S 3.5]{AB}, Arkhipov and Bezrukavnikov show this is the case when $k$ is a finite field, which implies the general case of $\textup{char}(k)>0$. When $\textup{char}(k)=0$, we only need to consider $k=\QQ$. However, since $\Sat^{\geom}$ is a semisimple category with simple objects $\IC_{\lambda,\Qbar}$, we know $\IC_{\lambda}*\IC_{\mu}\cong\oplus_{\nu}\IC_{\nu}\otimes V^{\nu}_{\lambda,\mu}$ for $\Ql[\GQ]$-modules $V^\nu_{\lambda,\mu}=\Hom_{\Gr}(\IC_\nu,\IC_\lambda*\IC_\mu)$. By the standard argument of choosing an integral model, and the claim for finite fields, we see that the $\GQ$-action on $V^\nu_{\lambda,\mu}$ is trivial when restricted to $\GQp$ for almost all primes $p$. By Chebotarev density, this implies that the $\GQ$-action on $V^\nu_{\lambda,\mu}$ is trivial, and the claim is proved. It is then easy to see that the pullback functor gives a tensor equivalence $\Sat\isom\Sat^\geom$, hence also a tensor equivalence
\begin{equation}\label{Satake}
\Sat\isom\Rep(\hatG,\Ql).
\end{equation}

\subsubsection{The Hecke correspondence}\label{sss:Hk} Consider the following correspondence
\begin{equation}\label{Hkcorr}
\xymatrix{ & \Hk\ar[rr]^{\inv}\ar[dl]_{\oleft{h}}\ar[dr]^{\oright{h}} && \left[\dfrac{LG^+\backslash LG/LG^+}{\Aut_{\calO}}\right]\\
\Bun\times(\X{}) & & \Bun\times(\X{})}
\end{equation}
We need to explain some notations:
\begin{itemize}
\item The stack $\Hk$ is the functor which sends $R$ to the category of tuples $(x,\calP,\calP',\iota)$ where $x\in(\X{})(R)$, $\calP,\calP'\in\Bun(R)$ and $\iota$ is an isomorphism between the $\calP$ and $\calP'$ away from the graph of $x$. In particular, $\iota$ preserves the level structures at $0,1$ and $\infty$.  The maps $\oleft{h}$ and $\oright{h}$ are defined by
\begin{eqnarray*}
\oleft{h}(x,\calP,\calP',\iota)&=&(\calP,x);\\
\oright{h}(x,\calP,\calP',\iota)&=&(\calP',x).
\end{eqnarray*}
\item  The map $\inv$ sends $(x,\calP,\calP',\iota)\in\Hk(R)$ to the relative position between $\calP$ and $\calP'$ in the formal neighborhood of $x$. In fact, choosing a local parameter at $x$, we may identify the formal neighborhood of $x$ in $\X{R}$ with $\Spec R[[t]]$. Choosing trivializations of $\calP$ and $\calP'$ over $\Spec R[[t]]$ we may view $\iota$ as an automorphism of the trivial $G$-torsor over $\Spec R((t))$, hence an element $g\in G(R((t)))=LG(R)$. The image of $g$ as an $R$-point of the quotient stack $\left[\dfrac{LG^+\backslash LG/LG^+}{\Aut_{\calO}}\right]$ does not depend on the choices of trivializations and the local parameter. This defines the map $\inv$.
\end{itemize}

\subsubsection{Geometric Hecke operators} The universal geometric Hecke operator is defined as a bifunctor
\begin{eqnarray}\notag
\notag\tTT:\Sat\times D^b(\Bun\times(\X{}),\Qlbar)&\to& D^b(\Bun\times(\X{}),\Qlbar)\\
\label{defineT}(\calK,\calF)&\mapsto&\oright{h}_!(\oleft{h}^*\calF\otimes_{\Ql}\inv^*\calK).
\end{eqnarray}
Note that $\calK\in\Sat$ always has $\Ql$-coefficients. The bifunctor $\tTT$ is compatible with the tensor structure $*$ on $\Sat$ in the sense that the functor
\begin{eqnarray*}
\Sat&\to&\End(D^b(\Bun\times(\X{}),\Qlbar))\\
\calK&\mapsto&(\calF\mapsto\tTT(\calK,\calF))
\end{eqnarray*}
has a natural structure of a monoidal functor (the monoidal structure on the target is given by composition of endofunctors). We more often use the following functor
\begin{eqnarray*}
\TT:\Sat\times D^{b}(\Bun,\Qlbar)&\to& D^{b}(\Bun\times(\X{}),\Qlbar)\\
\calF &\mapsto& \tTT(\calK,\calF\boxtimes\Qlbar).
\end{eqnarray*}

\subsection{Eigen local systems}

\begin{defn}\label{def:HE}
A {\em Hecke eigensheaf} is a triple $(\calF,\calE,\epsilon)$ where
\begin{itemize}
\item $\calF\in D^b(\Bun,\Qlbar)$;
\item $\calE:\Sat\to\Loc(\X{},\Qlbar)$ is a tensor functor;
\item $\epsilon$ is a system of isomorphisms $\epsilon(\calK)$ for each $\calK\in\Sat$
\begin{equation}\label{eigeneq}
\epsilon(\calK):\TT(\calK,\calF)\isom\calF\boxtimes\calE(\calK)
\end{equation}
which is compatible with the tensor structures (see \cite[discussion after Proposition 2.8]{GDJ}).
\end{itemize}
\end{defn}
The functor $\calE$ in the definition defines a $\hatG$-local system on $\X{}$, which is called the {\em eigen local system} of $\calF$. When we say $\calF\in D^b(\Bun)$ is a Hecke eigensheaf, it means there exists $(\calE,\epsilon)$ as above making the triple $(\calF,\calE,\epsilon)$ a Hecke eigensheaf in the sense of Definition \ref{def:HE}.

\begin{theorem}\label{th:eigen} Suppose $G$ is of type $A_{1},D_{2n},E_{7},E_{8}$ or $G_{2}$. Assume \eqref{ass:u0} holds. 
\begin{enumerate}
\item Assume \eqref{ass:i} also holds. Then for every character $\chi\in\tilA^*_{0,\odd}$, the object $j_!\calF_{\chi}=j_*\calF_{\chi}\in D^{b}(\Bun)_{\odd}$ is a Hecke eigensheaf with eigen local system
\begin{equation*}
\calE_{\chi,\Qlbar}:\Sat\cong\Rep(\hatG,\Ql)\to\Loc(\X{},\Qlbar).
\end{equation*}
\item We do {\em not} assume \eqref{ass:i}. Then $\calE_{\chi,\Qlbar}$ is a priori a $\hatG(\Qlbar)$-local system on $\X{k'}$ where $k'=k(\sqrt{-1})$. There is a canonical way to descend $\calE_{\chi,\Qlbar}$ to a $\hatG(\Qlbar)$-local system on $\X{k}$. Moreover, there is a canonical isomorphism $\calE_{\chi,\Qlbar}\cong\calE_{\chibar,\Qlbar}$ over $\X{k}$ (here $\chibar=\chi^{-1}$).  
\item The $\hatG(\Qlbar)$-local system $\calE_{\chi,\Qlbar}$ descends canonically to a $\hatG(\Ql)$-local system $\calE_\chi$. Moreover, the isomorphism $\calE_{\chi,\Qlbar}\isom\calE_{\chibar,\Qlbar}$ descends to $\calE_\chi\isom\calE_{\chibar}$.
\item If $k$ is a finite field, $\calE_{\chi}(\calK)$ is pure of weight zero for any $\calK\in\Sat$.
\end{enumerate}
\end{theorem}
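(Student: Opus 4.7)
The plan is to establish (1) as the main step, which carries the geometric content, and then deduce (2)--(4) from it by descent and weight considerations.

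For (1), the crucial point is that the Hecke correspondence $\Hk$ in \eqref{Hkcorr} modifies $G$-bundles at a point of $\X{}$, which is disjoint from the level points $0, 1, \infty$; hence the $\tilK$-equivariance carried by the $\wt{\bP_0}$-level structure is preserved, and the Hecke operator $\TT(\calK, -)$ sends $D^b(\Bun)_\odd$ to $D^b(\Bun \times \X{})_\odd$. Moreover, since the $\tilA$-action on the fiber over $u_0$ factors through the stabilizer at the $0$-level, it commutes with the Hecke operators, so the $\tilA_0$-central character is preserved. Combining the variant of cleanness (Section \ref{sss:varclean}) with the central-character decomposition \eqref{chidecomp}, we see that $(u_0 \times \id_{\X{}})^*\TT(\calK, j_!\calF_\chi)$ is concentrated in its $\chi$-isotypic part. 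Applying Lemma \ref{l:writeE} then yields the canonical decomposition
$$
\TT(\calK, j_!\calF_\chi) \cong j_!\calF_\chi \boxtimes \calE_\chi(\calK), \qquad
\calE_\chi(\calK) := \bigl(V_\chi^* \otimes (u_0 \times \id)^*\TT(\calK, j_!\calF_\chi)\bigr)^{\tilA},
$$
and the tensor functoriality of $\calE_\chi$ together with the coherence isomorphism $\epsilon$ are inherited from the monoidal structure on $\TT$ under the Satake equivalence \eqref{Satake}.

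The main technical obstacle in (1) is proving that $\calE_\chi(\calK)$ is actually a local system on $\X{}$, i.e., that $(u_0 \times \id)^*\TT(\calK, j_!\calF_\chi)$ is concentrated in a single perverse degree and has lisse cohomology. My strategy is to analyze the fiber of $\Hk$ above $\{u_0\} \times \X{}$: since $u_0$ lies in the open $\tilK$-orbit with finite stabilizer $\tilA$, this fiber is, after passing to the $\tilA$-quotient, a locally trivial family of affine Schubert varieties $\Gr_{\leq \lambda}$ parametrized by $\X{}$. Smoothness of $\oleft{h}$ on this locus, properness of $\oright{h}$, and the fact that $\IC_\lambda$ is a shifted perverse sheaf of the expected cohomological amplitude then force the output to be a local system sitting in a single degree. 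A careful bookkeeping of Tate twists and dimension shifts is required to pin down the normalization.

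For (2), when $\sqrt{-1} \notin k$ and $k' = k(\sqrt{-1})$, the Galois action of $\Gal(k'/k)$ on $\tilA^*_{0,\odd}$ interchanges $\chi$ and $\chibar = \chi^{-1}$. Descending $\calE_{\chi,\Qlbar}$ from $\X{k'}$ to $\X{k}$ therefore amounts to producing a canonical isomorphism $\calE_\chi \cong \calE_\chibar$ satisfying a cocycle condition, which is provided by the observation that the association $\chi \mapsto (\calF_\chi, \calE_\chi)$ is compatible with passing from the odd $\Gamma$-representation $V_\chi$ to its inverse-character companion $V_\chibar$. Part (3) is the standard Galois descent from $\Qlbar$ to $\Ql$: since $\hatG$, $\Sat$, and the Hecke correspondence are all defined over $\Ql$, the $\Gal(\Qlbar/\Ql)$-conjugates of $\calE_{\chi,\Qlbar}$ are indexed by the Galois orbit of $\chi$ on the finite set $\tilA^*_{0,\odd}$, and packaging them together yields the desired $\Ql$-form; the compatibility $\calE_\chi \cong \calE_\chibar$ descends along with it. Finally, for (4), over a finite field $\IC_\lambda$ is pure of weight $0$ by the Beilinson--Bernstein--Deligne--Gabber purity theorem, and $\calF_\chi$ on $\tilK\backslash U$ is pure of weight $0$ as it is attached to a finite-group representation; Deligne's preservation of weights applied to the proper map $\oright{h}$ then shows $\TT(\calK, j_!\calF_\chi)$ is pure of weight $0$, and extracting $\calE_\chi(\calK)$ as a direct summand via Lemma \ref{l:writeE} preserves purity.
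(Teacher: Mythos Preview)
Your overall strategy for (1) is on the right track—using cleanness, preservation of the central character, and Lemma \ref{l:writeE} to extract $\calE_\chi(\calK)$—and this matches the paper. However, there is a genuine gap in your argument that $(u_0\times\id)^*\TT(\calK,j_!\calF_\chi)$ is concentrated in a single perverse degree (which is needed before Lemma \ref{l:writeE} can be applied). Your proposed mechanism—smoothness of $\oleft{h}$, properness of $\oright{h}$, and the perversity of $\IC_\lambda$—does not work: after pulling back via $u_0$, you are computing $\pi_!(\omega^*j_!\calF_\chi\otimes\calK_{\GR})$, where the integrand is the $!$-extension of a perverse sheaf from the open $\GR^U\subset\GR_{\leq\lambda}$, not a perverse sheaf on all of $\GR_{\leq\lambda}$. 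Proper pushforward of such an extension-by-zero has no reason to sit in one degree. The paper (Proposition \ref{p:perv}) instead proves that the restricted map $\pi^U:\GR^U_{\leq\lambda}\to\X{}$ is \emph{affine} (Lemma \ref{l:affine}, via an ampleness argument for the determinant line bundle), so that $\pi^U_!$ is right $t$-exact and $\pi^U_*$ is left $t$-exact for the perverse $t$-structure. Cleanness $j_!\calF_\chi=j_*\calF_\chi$ then gives $\pi^U_!(\cdots)\cong\pi^U_*(\cdots)$, pinning the output in a single perverse degree. The same $!$/$*$ sandwich is what yields purity in (4): $\pi^U_!$ gives weight $\leq 0$ and $\pi^U_*$ gives weight $\geq 0$. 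Your appeal to ``Deligne's preservation of weights applied to the proper map $\oright{h}$'' only gives one inequality, since the input is $j_!$ of something pure, hence only mixed of weight $\leq 0$.

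A second, smaller gap: once you know the output is a single perverse sheaf on $\X{}$, you still need it to be \emph{lisse}. The paper does not do this by direct geometry; it first establishes the tensor isomorphisms $\calE_\chi(\calK_1)\otimes\calE_\chi(\calK_2)\cong\calE_\chi(\calK_1*\calK_2)$ and then invokes the argument of \cite[Section 4.2]{HNY} that any tensor functor from $\Rep(\hatG)$ to perverse sheaves on a curve with this compatibility lands in local systems. Finally, for (2) your sketch misses the mechanism producing $\calE_\chi\cong\calE_{\chibar}$: the paper identifies $\calE_\chi(\calK)^\vee\cong\calE_{\chibar}(\DD\calK)$ by Verdier duality, and then uses $-1\in W$ so that $\DD$ realizes duality in $\Sat$, whence $\calE_{\chibar}(\DD(-))^\vee\cong\calE_{\chibar}(-)$. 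Combined with the evident $\sigma^*\calE_\chi\cong\calE_{\chibar}$, this is the descent datum.
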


The proof of this theorem occupies Section \ref{ss:pfeigen}.

\subsubsection{The Beilinson-Drinfeld Grassmannian}\label{sss:BD} Beilinson and Drinfeld define a global analog of the affine Grassmannian. We recall the definition in our context. We define $\GR$ by the Cartesian diagram
\begin{equation*}
\xymatrix{\GR\ar[r]\ar[d]^{\pi} & \Hk\ar[d]^{\oright{h}}\\
\X{}\ar[r]^(.4){u_0\times\id} & \Bun\times(\X{})}
\end{equation*}
Using the moduli interpretation of $\Hk$, we see that $\GR$ classifies triples $(x,\calP,\iota)$ where $x\in (\X{})(R)$, $\calP\in\Bun(R)$ and $\iota$ is an isomorphism $\calP|_{\PP^1_R-\Gamma(x)}\isom\calP_0|_{\PP^1_R-\Gamma(x)}$, where $\Gamma(x)\subset\PP^1_R$ is the graph of $x$. Fixing $x$ the fiber of $\GR$ over $x$ is denoted by $\Gr_x$, which is an ind-scheme over $R$. When $R$ is a field, after choosing a local parameter $t$ at $x$, we may identify $\Gr_x$ with the affine Grassmannian $\Gr$ in Section \ref{sss:Satake}.

For every $\calK\in\Sat$, there is a corresponding object $\calK_{\GR}\in D^{b}(\GR,\Ql)$ obtained by spreading it over $\X{}$: for the construction, see \cite[Section 2.1.3]{Ga}. This is the same as the pullback of $\inv^{*}\calK$ from $\Hk$ to $\GR$.

\subsubsection{Description of the local system} From now on till Section \ref{sss:qmin} we work under both assumptions \eqref{ass:u0} and \eqref{ass:i}, and the coefficient field for all sheaves is $\Qlbar$.

Define $\wt{\GR^{U}}$ by the Cartesian diagram
\begin{equation}\label{u0u0}
\xymatrix{\wt{\GR^{U}}\ar[r]\ar[d]^{\tpi} & \Hk\ar[d]^{(\oleft{h},\oright{h})}\\
\X{}\ar[r]^(.4){u_{0}\times u_{0}\times\id} & \Bun\times\Bun\times(\X{})}
\end{equation}
Since the morphism $u_{0}\times u_{0}\times\id:\X{}\to\Bun\times\Bun\times(\X{})$ factors through the $\tilA\times\tilA$-torsor $\X{}\to(\tilK\backslash U)^{2}\times\X{}$, therefore $\wt{\GR^{U}}$ carries a natural $\tilA\times\tilA$-action. The first $\tilA$-action (resp. the second action) comes from the base change through $\oleft{h}$ (resp. $\oright{h}$).

The ind-scheme $\wt{\GR^{U}}$ fits into a diagram
\begin{equation}\label{GRGR}
\xymatrix{& \wt{\GR^{U}}\ar[dl]\ar[r]^{\nu} & \GR^{U}\ar@{^{(}->}[r]^{j_{\GR}}\ar[dl]^{\omega^{U}} & \GR\ar[r]\ar[dl]^{\omega}\ar[dr]^{\pi} & \Hk\ar[dr]^{\oright{h}}\\
\Spec k\ar[r]^{u_{0}} & \tilK\backslash U\ar@{^{(}->}[r] & \Bun & & \X{}\ar[r]^(.4){u_{0}\times\id} & \Bun\times(\X{})} 
\end{equation}
where all squares are Cartesian by definition. Using the moduli interpretation given in Section \ref{sss:BD}, $\omega:\GR\to\Hk\xrightarrow{\oleft{h}}\Bun$ sends the triple $(x,\calP,\iota)$ to $\calP\in\Bun$. Denote the projections to curves by
\begin{equation*}
\pi^U:\GR^U\to\X{};\hspace{1cm}\tpi:\wt{\GR^U}\to\X{}.
\end{equation*}
By proper base change and the definition of Hecke operators \eqref{defineT},
\begin{equation}\label{bch}
(u_{0}\times\id)^{*}\TT(\calK,j_{!}\calF_{\chi})=\pi_{!}(\omega^{*}j_{!}\calF_{\chi}\otimes\calK_{\GR,\Qlbar})=\pi^{U}_{!}(\omega^{U,*}\calF_{\chi}\otimes\calK_{\GR,\Qlbar}).
\end{equation}

On the other hand, it is easy to see that:
\begin{equation*}
\omega^{U,*}\calF_{\chi}\cong(V_{\chi}\otimes\nu_{*}\Qlbar)^{\tilA(1)}.
\end{equation*}
Here we put $\tilA(1)$ to emphasize the action of $\tilA$ on $\nu_{*}\Ql$ is induced from the first $\tilA$-action on $\wt{\GR^{U}}$. Taking $\tilA$-invariants makes sense, see the discussion preceding Lemma \ref{l:writeE}. Tensoring with $\calK_{\GR,\Qlbar}$ we get
\begin{equation*}
(u_{0}\times\id)^{*}\TT(\calK,j_{!}\calF_{\chi})\cong\pi^{U}_{!}(\omega^{U,*}\calF_{\chi}\otimes\calK_{\GR,\Qlbar})\cong\tpi_{!}(V_{\chi}\otimes\nu_{*}\nu^{*}\calK_{\GR,\Qlbar})^{\tilA(1)}.
\end{equation*}

Let $(\nu_{*}\Qlbar)_{\odd}\subset\nu_{*}\Qlbar$ be the subcomplex on which $\mu_{2}^{\ker}\subset\tilA$ acts by the sign representation (either in the first action or the second, they are the same on $\mu^{\ker}_{2}$). Therefore
\begin{equation*}
(\nu_{*}\Qlbar)_{\odd}=\bigoplus_{\chi\in\tilA^{*}_{0,\odd}}V^{*}_{\chi}\boxtimes(V_{\chi}\otimes\nu_{*}\Qlbar)^{\tilA(1)}\cong\bigoplus_{\chi\in\tilA^{*}_{0,\odd}}V^{*}_{\chi}\boxtimes\omega^{U,*}\calF_{\chi}
\end{equation*}
This isomorphism is $\tilA\times\tilA$-equivariant: $\tilA(1)$ acts on $V_\chi^*$ and $\tilA(2)$ acts on $\omega^{U,*}\calF_{\chi}$. This is why we use $\boxtimes$ rather than $\otimes$. Tensoring with $\calK_{\GR,\Qlbar}$ we get
\begin{equation*}
(\nu_{*}\nu^{*}\calK_{\GR,\Qlbar})_{\odd}\cong\bigoplus_{\chi\in\tilA^{*}_{0,\odd}}V^{*}_{\chi}\boxtimes(\omega^{U,*}\calF_{\chi}\otimes\calK_{\GR,\Qlbar}).
\end{equation*}
Taking $\pi^{U}_{!}$ we get an $\tilA\times\tilA$-equivariant isomorphism
\begin{equation}\label{dfdf}
(\tpi_{!}\nu^{*}\calK_{\GR,\Qlbar})_{\odd}
\cong\bigoplus_{\chi\in\tilA^{*}_{0,\odd}}V^{*}_{\chi}\boxtimes\pi^{U}_{!}(\omega^{U,*}\calF_{\chi}\otimes\calK_{\GR,\Qlbar})
\stackrel{\eqref{bch}}{\cong}\bigoplus_{\chi\in\tilA^{*}_{0,\odd}}V^{*}_{\chi}\boxtimes(u_{0}\times\id)^{*}\TT(\calK,j_{!}\calF_{\chi}).
\end{equation}
On the other hand, by the defining equation \eqref{eigeneq} of eigen local systems,  we get an $\tilA$-equivariant isomorphism between local systems on $\X{}$:
\begin{equation*}
(u_0\times\id)^*\TT(\calK,j_!\calF_{\chi})\cong V_{\chi}\otimes\calE_{\chi}(\calK)_{\Qlbar}.
\end{equation*}
Combining this with \eqref{dfdf}, we get a canonical $\tilA\times\tilA$-equivariant isomorphism
\begin{equation}\label{oddchi}
(\tpi_{!}\nu^{*}\calK_{\GR,\Qlbar})_{\odd}\cong\bigoplus_{\chi\in\tilA^{*}_{0,\odd}}(V^{*}_{\chi}\boxtimes V_{\chi})\otimes\calE_{\chi}(\calK)_{\Qlbar}
\end{equation}
On the right side, $\tilA(1)$ acts on $V^{*}_{\chi}$ and $\tilA(2)$ acts on $V_{\chi}$. This isomorphism also shows that $(\tpi_{!}\nu^{*}\calK_{\GR,\Qlbar})_{\odd}$ is a local system on $\X{}$.

\subsubsection{Rationality issue}\label{sss:rat} We use the following convention
\begin{equation*}
\Ql':=\begin{cases}\Ql & \textup{ if } G \textup{ is of type } D_{4n}, E_8 \textup{ or } G_2\\
      					\Ql(\sqrt{-1}) & \textup{ if } G \textup{ is of type } A_1, D_{4n+2} \textup{ or } E_7\end{cases}
\end{equation*}
Let $\tilA(\kbar)\times\tilA(\kbar)$ act on the group ring $\Ql'[\tilA(\kbar)]$ via left and the inverse of right translation on $\tilA(\kbar)$. Then $\Ql'[\tilA(\kbar)]$ is an $\Gamma^{(2)}:=\tilA(\kbar)\times\tilA(\kbar)\rtimes\Gk$-module. We may decompose it under the action of $\tilA_{0}$ via the embedding into the second copy of $\tilA(\kbar)$:
\begin{equation*}
\Ql'[\tilA(\kbar)]=\bigoplus_{\chi\in\tilA^*_{0,\odd}}\Ql'[\tilA(\kbar)]_{\chi}.
\end{equation*}
Note that $\Qlbar[\tilA(\kbar)]_\chi= V^*_\chi\boxtimes V_\chi$ as $\Gamma^{(2)}$-modules. From the construction of the  $\Ql$-structure on $\calE_\chi(\calK)_{\Qlbar}$ we will give in Section \ref{sss:Ql}, it is easy to see that \eqref{oddchi} descends to
\begin{equation}\label{KQl}
(\tpi_{!}\nu^{*}\calK_{\GR,\Ql'})_{\odd}\cong\bigoplus_{\chi\in\tilA^*_{0,\odd}}\Ql'[\tilA(\kbar)]_{\chi}\otimes\calE_{\chi}(\calK)_{\Ql'}.
\end{equation}
Equivalently,
\begin{equation}\label{Echi}
\calE_{\chi}(\calK)_{\Ql'}\cong\left(\Ql'[\tilA(\kbar)]_{\chi}\otimes(\tpi_{!}\nu^{*}\calK_{\GR,\Ql'})_{\odd}\right)^{\tilA\times\tilA}.
\end{equation}

\subsubsection{Intersection cohomology}
Let $\lambda$ be a dominant coweight of $G$, and $\calK=\IC_{\lambda}$. Then the spread-out $\calK_{\GR}$ is, up to twist and shift, the intersection cohomology complex of $\GR_{\leq\lambda}$. More precisely,
\begin{equation*}
\calK_{\GR}=\IC_{\GR_{\leq\lambda}}[\jiao{2\rho,\lambda}](\jiao{\rho,\lambda}).
\end{equation*}
Here we normalize the intersection complex $\IC_{X}$ of a variety $X$ to lie in cohomological degrees $0,\cdots, 2\dim X$ and is pure of weight $0$ as a complex. Since $\wt{\GR^{U}_{\leq\lambda}}\to\GR^{U}_{\leq\lambda}\hookrightarrow\GR_{\leq\lambda}$ is \'etale, $\nu^{*}\calK_{\GR}$ is also the intersection complex of $\wt{\GR^{U}_{\leq\lambda}}$ up to shift and twist. We use the notation
\begin{equation*}
\bIH_c(\wt{\GR^{U}_{\leq\lambda}}/\X{},\Ql'):=\tpi_{!}\IC_{\wt{\GR^{U}_{\leq\lambda}},\Ql'}\in D^{b}(\X{},\Ql')
\end{equation*}
whose stalk at $x\in\X{}$ calculates the intersection cohomology of $\wt{\Gr^{U}_{x,\leq\lambda}}$. Then \eqref{KQl} becomes
\begin{equation}\label{IH}
\bIH_{c}(\wt{\GR^{U}_{\leq\lambda}}/\X{},\Ql')_{\odd}[\jiao{2\rho,\lambda}](\jiao{\rho,\lambda})\cong\bigoplus_{\chi\in\tilA^*_{0,\odd}}\Ql'[\tilA(\kbar)]_{\chi}\otimes\calE_{\chi}(\calK)_{\Ql'}.
\end{equation}
In particular, $\bIH_{c}(\wt{\GR^{U}_{\leq\lambda}}/\X{},\Ql')_{\odd}$ is concentrated in the middle degree $\jiao{2\rho,\lambda}$, and is a local system.

\subsubsection{Quasi-minuscule Schubert variety}\label{sss:qmin} We specialize to the case $\lambda=\alpha^{\vee}$, the coroot corresponding to the maximal root $\alpha$ of $G$. This is called a quasi-minuscule weight of $\hatG$ because in the weight decomposition of the irreducible representation $V^{\qm}:=V_{\alpha^{\vee}}$ of $\hatG$, all nonzero weights  are in the Weyl group orbit of $\alpha^{\vee}$. When $\hatG$ is simply-laced, $V^{\qm}\cong\wh{\frg}$ is the adjoint representation of $\hatG$.

For basic properties of the variety $\Gr_{x,\leq\alpha^\vee}$ we refer to \cite[Section 5.3, especially Lemma 5.22]{HNY}. In particular, it has dimension $\jiao{2\rho,\alpha^{\vee}}=2h^{\vee}-2$, where $h^{\vee}$ is the dual Coxeter number of $G$. It consists of two $L^{+}_{x}G$-orbits: the base point $\{\star\}=\Gr_{x,0}$ (the only singularity) and its complement $\Gr_{x,\alpha^\vee}$, which is in fact isomorphic to the variety $Y$ introduced in Section \ref{ss:explicit}. The open subset $\Gr^U_{x,\leq\alpha^\vee}$ is of the form $Y\backslash D_x$, where $D_x$ is a divisor of $Y$ depending algebraically on $x$.

Let $\tilY=\wt{\GR^{U}_{\alpha^\vee}}$ and $Z$ be its complement in $\wt{\GR^{U}_{\leq\alpha^\vee}}$. We temporarily denote the corresponding open and closed embeddings by $j$ and $i$. Then $Z=\tilA\times(\X{})=\wt{\GR^{U}_{0}}$. Let $\IC^{\qm}_{\wt{\GR}}$ be the intersection complex of $\wt{\GR^{U}_{\leq\alpha^{\vee}}}$. We have a distinguished triangle
\begin{equation*}
j_!\Ql[2h^\vee-2](h^\vee-1)\to\IC^{\qm}_{\wt{\GR}}[2h^{\vee}-2](h^{\vee}-1)\to i_*(C\otimes\QQ_{\ell,Z})\to
\end{equation*}
where $C$ is the stalk of $\IC^{\qm}_{\wt{\GR}}[2h^{\vee}-2](h^{\vee}-1)$ along $Z$, which is the same as the stalk of $\IC_{\alpha^\vee}$ at $\{\star\}$. By the definition of the intersection cohomology complex, $C$ lies in degrees $<0$, therefore, taking compactly supported cohomology of the above distinguished triangle we get an isomorphism 
\begin{equation*}
\bH^{2h^\vee-2}_{c}(\tilY/\X{})\isom\bIH_c^{2h^\vee-2}(\wt{\GR^U_{\leq\alpha^\vee}}/\X{}).
\end{equation*}
Therefore \eqref{IH} implies an isomorphism in $D^b(\X{},\Ql')$
\begin{equation}\label{HQmin}
\bH^{2h^{\vee}-2}_{c}(\tilY/\X{},\Ql')_{\odd}(h^{\vee}-1)\cong\bigoplus_{\chi\in\tilA^*_{0,\odd}}\Ql'[\tilA(\kbar)]_{\chi}\otimes\calE_{\chi}(\IC_{\alpha^{\vee}})_{\Ql'}.
\end{equation}
We will abbreviate $\calE_{\chi}(\IC_{\alpha^{\vee}})$ by $\calE^{\qm}_{\chi}$. Then
\begin{equation}\label{Eqmin}
\calE^{\qm}_{\chi,\Ql'}\cong\left(\Ql'[\tilA(\kbar)]_{\chi}\otimes\bH_{c}^{2h^{\vee}-2}(\tilY/\X{},\Ql')_{\odd}\right)^{\tilA\times\tilA}.
\end{equation}

When we need to emphasize the base field we are working with, we write $\calE_{\chi,k}$ as the eigen local system over $\X{k}$. Next we extend the local system $\calE_{\chi,\QQ}$ from $\X{\QQ}$ to $\X{\Z}$. 
 
\begin{prop}\label{p:int}
There is a positive integer $N$ such that $\calE_\chi$ extends to a $\hatG(\Ql)$-local system $\un{\calE}_{\chi}$ over $\X{\Z}$. Moreover, for any geometric point $\Spec k\to\Spec\Z$, the restriction of $\un{\calE}_{\chi}$ to $\X{k}$ is isomorphic to $\calE_{\chi,k}$.
\end{prop}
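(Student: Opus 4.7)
The plan is not to extend the already-constructed $\calE_{\chi,\QQ}$ by analytic continuation, but rather to execute the construction of Section \ref{s:cons} uniformly over $\Spec \Z = \Spec \ZZ[1/2\ell N]$ for a suitably large $N$. The first step is to spread out every geometric ingredient. The split group $G$, the open $K$-orbit $\unU \subset \unfl_G$, the canonical double cover $\tilK \to K$, the finite flat group scheme $\tilA$ (which becomes discrete after a finite \'etale base change), the integral section $\unu_0 \in \unU(\Z)$ from Section \ref{sss:basepoint}, the moduli stack $\unBun$, and the Beilinson--Drinfeld affine Grassmannian together with its $\tilA^{(2)}$-cover $\wt{\GR^U_{\leq\lambda}}$ (defined by the analog of diagram \eqref{u0u0}) all admit canonical models over $\Z$, provided $N$ absorbs $2N_0$, the order of $\tilA$, and $\ell$. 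Denote the resulting object over $\Z$ by $\wt{\un{\GR}^U_{\leq\lambda}}$, with structural map $\tpi$ to $\X{\Z}$.

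Next, for each dominant coweight $\lambda$, define
\begin{equation*}
\un{\calE}_\chi(\IC_\lambda) \,:=\, \Big(\Ql'[\tilA(\Qbar)]_\chi \otimes \bigl(\tpi_! \IC_{\wt{\un{\GR}^U_{\leq\lambda}},\Ql'}\bigr)_{\odd}\Big)^{\tilA \times \tilA}[\jiao{2\rho,\lambda}](\jiao{\rho,\lambda})
\end{equation*}
as an object of $D^b(\X{\Z},\Ql')$, in parallel with \eqref{IH} and \eqref{Echi}. Since $\tpi$ is proper, proper base change yields, for every geometric point $s : \Spec k \to \Spec\Z$, a canonical isomorphism between the $\X{k}$-pullback of $\un{\calE}_\chi(\IC_\lambda)$ and the corresponding object $\calE_{\chi,k}(\IC_\lambda)$ built fiberwise over $k$. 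By Theorem \ref{th:eigen} applied over each geometric fiber (generic fiber $\QQ$ and closed fibers $\FF_p$ for $p \nmid 2\ell N$), this pullback is a local system concentrated in degree $0$; combined with the constructibility of $\tpi_!\IC$ and the smoothness of $\X{\Z} \to \Spec\Z$, this forces $\un{\calE}_\chi(\IC_\lambda)$ itself to be a local system on $\X{\Z}$. The monoidal structure of $\Sat$ recalled in Section \ref{sss:Satake} is defined over $\Z$, and the eigensheaf equations \eqref{eigeneq} spread out by base change from the generic fiber, so the assignment $\IC_\lambda \mapsto \un{\calE}_\chi(\IC_\lambda)$ upgrades to a tensor functor $\Sat \to \Loc(\X{\Z},\Ql')$, i.e.\ to a $\hatG(\Ql')$-local system on $\X{\Z}$.

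Finally, the coefficient and arithmetic descents of Theorem \ref{th:eigen}(2)(3), carried out in Sections \ref{sss:descent} and \ref{sss:Ql}, use only the canonical isomorphism $\calE_\chi \cong \calE_{\chibar}$ and a standard Galois descent of coefficients; both are formulated entirely in terms of the constructible complex $\tpi_!\IC_{\wt{\un{\GR}^U}}$ and therefore apply verbatim over $\Z$, producing the desired descent of $\un{\calE}_\chi$ to a $\hatG(\Ql)$-local system on $\X{\Z}$. The compatibility with $\calE_{\chi,k}$ on each geometric fiber is then inherited from the fiberwise proper base change identified above. The only real obstacle is bookkeeping: one must verify that every construction in Section \ref{s:cons} — cleanness, geometric Hecke operators, Tannakian reconstruction, and the two descents — is formulated in a base-change-compatible way. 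These are all standard facts about proper morphisms and perverse sheaves, so no new ideas are needed; after enlarging $N$ finitely many times to absorb ramification of auxiliary \'etale covers (such as the one trivializing $\tilA_0$), the construction goes through uniformly.
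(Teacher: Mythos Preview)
Your plan differs substantially from the paper's and contains a real gap. You try to spread the entire Tannakian package over $\Spec\Z$: define $\un{\calE}_\chi(\IC_\lambda)$ for every $\lambda$ via the cohomological formula, argue each is lisse, and then spread out the tensor isomorphisms. The paper does something much lighter. It only spreads out the \emph{quasi-minuscule} piece, defining $\un{\calE'}^{\qm}_{\chi}$ over $\X{\ZZ'[1/2\ell N]}$ by the integral analogue of \eqref{Eqmin}, and enlarges $N$ \emph{once} so that this single object is a local system of rank $d=\dim V_{\alpha^\vee}$. It then finishes with a fundamental-group diagram chase: since the quasi-minuscule representation $a:\hatG\hookrightarrow\GL_d$ is faithful and the surjections $s':\pi_1(\X{\QQ'})\twoheadrightarrow\pi_1(\X{\ZZ'[1/2\ell N]})$ and $s:\pi_1(\X{\QQ})\twoheadrightarrow\pi_1(\X{\Z})$ have the same kernel, the already-built $\rho_{\QQ}:\pi_1(\X{\QQ})\to\hatG(\Ql)$ kills $\ker s$ and hence factors through $\pi_1(\X{\Z})$. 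No integral cleanness, no integral tensor structure, no integral descent is needed; Theorem \ref{th:eigen} over $\QQ$ is used as a black box.

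The gap in your argument is the sentence ``combined with the constructibility of $\tpi_!\IC$ and the smoothness of $\X{\Z}\to\Spec\Z$, this forces $\un{\calE}_\chi(\IC_\lambda)$ itself to be a local system on $\X{\Z}$.'' A constructible sheaf whose restriction to every geometric fibre of a smooth family is lisse of a fixed rank need not be lisse on the total space; for instance $j_!\Ql\oplus i_*\Ql$ on $\AA^1_{\ZZ_p}$, with $j$ and $i$ the inclusions of the generic and special fibres, is fibrewise lisse of rank one but has zero specialisation maps. You could repair this for a single $\lambda$ by enlarging $N$, but you need all $\lambda$ at once to build the tensor functor, and a priori the bad primes could be unbounded. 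The only way to bound them uniformly is to observe that once one faithful $\IC_{\lambda_0}$ extends, every $\IC_\lambda$ does via the factorisation $\pi_1(\X{\Z})\to\hatG(\Ql)\to\GL(V_\lambda)$ --- which is exactly the paper's trick. (A minor slip: $\tpi$ is not proper, since $\GR^U_{\leq\lambda}$ is an affine open in the proper $\GR_{\leq\lambda}$; base change for $\tpi_!$ is of course still valid, just not for the reason you state.)
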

\begin{proof}
We define
\begin{equation*}
\QQ':=\begin{cases}\QQ & \textup{ if } G \textup{ is of type } D_{4n}, E_8 \textup{ or } G_2\\
\QQ(i) & \textup{ if } G \textup{ is of type } A_1, D_{4n+2} \textup{ or } E_7
\end{cases}
\end{equation*}
Let $\ZZ'$ be the ring of integers in $\QQ'$. We will first extend $\calE_{\chi,\QQ',\Ql'}$ (over $\X{\QQ'}$ with $\Ql'$-coefficients) to $\X{\ZZ'[1/2\ell N]}$ for some $N$.

In what follows, we use underlined symbols to denote the integral versions of the spaces, sheaves, etc. Over $\ZZ'[1/2]$, the groups $G, K,\cdots$ and the spaces $\Bun, U,\cdots$ have integral models $\un{G},\un{K}, \cdots, \un{\Bun}, \un{U}, \cdots$. All these spaces are defined in a natural way such that their $k$-fibers ($k$ is any field with $\textup{char}(k)\neq2$) are the same as the corresponding spaces over the base field $k$ we defined before. 

As discussed in Section \ref{sss:basepoint}, the rational point $u_{0}$ extends to a point $\unu_{0}:\Spec\ZZ'[1/2N_{0}]\to\unU$. The stabilizer of $\unu_{0}$ under $\un{\tilK}$ is a finite flat group scheme $\un{\tilA}$ over $\ZZ'[1/2N_{0}]$. We may define the integral version of $\tilY$ (in Section \ref{sss:qmin}) by forming the Cartesian diagram
\begin{equation*}
\xymatrix{\un{\tilY}\ar[d]\ar[r] & \un{\GR}_{\alpha^{\vee}}\ar[d]\\
\Spec\ZZ'[1/2N_{0}] \ar[r]^(.6){\unu_{0}} & \un{\Bun}}
\end{equation*}
Then $\un{\tilY}$ maps to $\X{\ZZ'[1/2\ell N_{0}]}$. Let
\begin{equation*}
\un{\calE'}^{\qm}_{\chi}=\left(\Ql'[\tilA(\Qbar)]_{\chi}\otimes\bH^{2h^{\vee}-2}_{c}(\un{\tilY}/\X{\ZZ'[1/2\ell N_{0}]},\Ql')_{\odd}\right)^{\un{\tilA}\times\un{\tilA}}
\end{equation*} 
We need to make sense of the operation of taking invariant of the group scheme $\un{\tilA}$. Note that $\un{\tilA}$ is finite \'etale over $\ZZ'[1/2N_0]$, hence $\Gal(\Qbar/\QQ')$ acts on $\tilA(\Qbar)$ via its quotient $\pi_1(\ZZ'[1/2N_0])$, therefore the construction of invariants by descent in Section \ref{sss:inv} still works over the base $\ZZ'[1/2N_0]$. By proper base change and \eqref{Eqmin}, the restriction of $\un{\calE'}^{\qm}_{\chi}$ to $\X{k}$ is the eigen local system $\calE^{\qm}_{\chi,k}$ for any field-valued point $\Spec k\to \Spec \ZZ'[1/2\ell N_0]$. 

By enlarging $N_{0}$ to some positive integer $N$, we may assume $\un{\calE'}^{\qm}_{\chi}$ is a local system of rank $d=\dim V_{\alpha^{\vee}}$ over $\X{\ZZ'[1/2\ell N]}$ (in fact we do not need to enlarge $N_{0}$ in this step but I do not need this fact). The monodromy representation of $\un{\calE'}^{\qm}_{\chi}$ is 
\begin{equation*}
\rho':\pi_{1}(\X{\ZZ'[1/2\ell N]},*)\to\GL_{d}(\Ql').
\end{equation*}
Since the $\QQ'$-fiber of $\un{\calE'}^{\qm}_{\chi}$ is $\calE^{\qm}_{\chi,\QQ',\Ql'}$, we have a commutative diagram
\begin{equation}\label{4pi}
\xymatrix{\pi_{1}(\X{\QQ'})\ar[d]^{s'}\ar@{^{(}->}[r]& \pi_{1}(\X{\QQ})\ar[d]^{s}\ar[r]^{\rho_{\QQ}} & \hatG(\Ql)\ar@{^{(}->}[d]^{a}\\
\pi_{1}(\X{\ZZ'[1/2N]})\ar@/_2pc/[rr]^{\rho'}\ar@{^{(}->}[r] & \pi_{1}(\X{\Z})\ar@{-->}[r]\ar@{-->}[ur]^{\rho} & \GL_{d}(\Ql')}
\end{equation}
where $\rho_{\QQ}$ is the monodromy representation of $\calE_{\chi,\QQ}$ and the vertical map $a$ is the homomorphism giving the quasi-minuscule representation of $\hatG$. Since the vertical arrows $s'$ and $s$ are surjections with the same kernel, there is a unique way to fill in the dotted arrows. This dotted arrow $\rho$ gives the desired $\hatG(\Ql)$-local system $\un{\calE}_{\chi}$ over $\X{\Z}$.
\end{proof}

\subsection{Description of the motives}\label{ss:description}
\subsubsection{Motives of an open variety}\label{sss:motive} 
In this section, we assume the base field $k$ is a number field, and {\em we assume the Standard Conjectures}. For every smooth projective variety $X$ over $k$, and every $i\in\ZZ$, there is a well-defined motive $h^i(X)\in\Mot_{k}$ such that under the $\ell$-adic cohomology functor, $h^i(X)$ is sent to $\upH^i(X_{\kbar},\Ql)$.

More generally, suppose $X$ is a smooth quasi-projective variety over $k$, one can also define an object $h^i_{c,\pur}(X)\in\Mot_{k}$ as follows. Let $\Xbar$ be a compactification of $X$ over $\QQ$, which is a smooth projective variety such that $D=\Xbar\backslash X$ is a union of smooth divisors $\cup_{s\in S} D_{s}$ with normal crossings. Such an $\Xbar$ exists by Hironaka's resolution of singularities. Let
\begin{equation*}
h^i_{c,\pur}(X)=\ker(h^i(\Xbar)\to \oplus_{s\in S}h^i(D_{s})).
\end{equation*}
This may depend on the choice of the compactification. However we will see below that the $\ell$-adic realization of $h^i_{c,\pur}(X)$ only depends on $X$. Consider the following maps induces by inclusion and restriction
\begin{equation*}
\cohoc{i}{X_{\kbar},\Ql}\to\cohog{i}{\Xbar_{\kbar},\Ql}\to\oplus_{s\in S}\cohog{i}{D_{s,\kbar},\Ql}. 
\end{equation*}
Each cohomology group carries a weight filtration (we can choose an integral model and look at the weights given by $\Frob_v$ for almost all $v$) and the maps are strictly compatible with the weight filtrations. Taking the weight $i$ pieces, we get
\begin{equation*}
\Gr^W_i\cohoc{i}{X_{\kbar},\Ql}=\ker(\cohog{i}{\Xbar_{\kbar},\Ql}\to\oplus_{s\in S}\cohog{i}{D_{\kbar},\Ql)}=\upH_\ell(h^i_{c,\pur}(X)).
\end{equation*}
In other words, the $\ell$-adic realization of the motive $h^i_{c,\pur}(X)$ is the $\Gk$-module $\Gr^W_i\cohoc{i}{X_{\kbar},\Ql}$.

Suppose the smooth quasi-projective variety $X$ is equipped with an action of a finite group scheme $A$ over $k$, then we may first find an $A$-equivariant projective embedding. In fact, let $\act:A\times X\to X$ and $\pr:A\times X\to X$ be the action and projection map. If $\calL$ is an ample line bundle on $X$, then its average under the $A$-action $\calL_{A}=\det(\act_{*}\pr^{*}\calL)$ is again ample and $A$-equivariant. Using a high power of $\calL_{A}$ we have an $A$-equivariant projective embedding $X\hookrightarrow\PP^{N}$, and the closure of its image gives an $A$-equivariant compactification $\Xbar$ of $X$. Using the equivariant version of Hironaka's resolution of singularities, we may assume that $\Xbar-X=\cup_{s\in S} D_{s}$ is again a union of smooth divisors with normal crossings, and the whole situation is $A$-equivariant. 

Every {\em closed point} $a\in|A|$ gives a self-correspondence $\Gamma(a)$ of $(X,\Xbar, D_s)$ which is the graph of the $a$-action. All such correspondences span an algebra isomorphic to $\QQ[\tilA(\kbar)]^{\Gk}$, and its action on $h^i_{c,\pur}(X)$ gives a homomorphism
\begin{equation*}
\QQ[\tilA(\kbar)]^{\Gk}\to\End_{\Mot_k(\QQ)}(h^i_{c,\pur}(X)).
\end{equation*}
If $L$ is a number field and $e\in L[\tilA(\kbar)]^{\Gk}$ is an idempotent, then $eh^i_{c,\pur}(X)$ is an object in $\Mot_k(L)$.

\subsubsection{The idempotents}\label{sss:idem} In this section we assume the number field $k$ satisfies the assumption \eqref{ass:i}. Fix $\chi\in\tilA^{*}_{0,\odd}$. Let $\varphi_{\chi}:\tilA(\kbar)\to\Qbar$ be the characters of irreducible representations $V_{\chi}$. For $G=D_{4n}, E_{8}$ or $G_{2}$, $\varphi_{\chi}$ takes $\QQ$-values because  $\GQ$ acts on the set of irreducible characters; for $G=A_{1}, D_{4n+2}$ or $E_{7}$, $\varphi_{\chi}$ takes values in $\QQ(i)$ for the same reason. We let $\QQ'=\QQ$ in the former case and let $\QQ'=\QQ(i)$ in the later. Similarly we define $\Ql'$ as in Section \ref{sss:rat}. We will consider the category $\Mot_{k}(\QQ')$ of motives over $k$ with coefficients in $\QQ'$.

Under the $\Gk$-action on $\tilA(\kbar)$, $\varphi_{\chi}$ is also invariant because this action fixes the central character $\chi$. We can make the following idempotent element in $\QQ'[\tilA(\kbar)\times\tilA(\kbar)]$
\begin{equation*}
e_{\chi}:=\frac{1}{2^{2r+2}}\sum_{(a_{1},a_{2})\in\tilA(\kbar)}\varphi_{\chi}(a_{1}a_{2}^{-1})\cdot a_{1}\otimes a_{2}\in\QQ'[\tilA(\kbar)\times\tilA(\kbar)].
\end{equation*}
Since $\varphi_{\chi}$ is constant on $\Gk$-orbits of $\tilA(\kbar)$, we actually have
\begin{equation*}
e_{\chi}\in\QQ'[\tilA(\kbar)\times\tilA(\kbar)]^{\Gk}.
\end{equation*}
The action of $\epsilon_{\chi}$ on the $\tilA(\kbar)\times\tilA(\kbar)$-module $\Qlbar[\tilA(\kbar)]_{\chi}$ is the projector onto $\id\in\End(V_{\chi})=\Qlbar[\tilA(\kbar)]_{\chi}$. The action of $\epsilon_{\chi}$ on $\Qlbar[\tilA(\kbar)]_{\chi'}$ is zero if $\chi'\neq\chi$. Let
\begin{equation*}
M_{\chi,x}:=e_{\chi}h^{2h^{\vee}-2}_{c,\pur}(\tilY_{x})(h^{\vee}-1)\in\Mot_{k}(\QQ').
\end{equation*}

\begin{prop}\label{p:motivic} There is an isomorphism of $\Gk$-modules
\begin{equation*}
\upH(M_{\chi,x},\Ql')\cong\calE^{\qm}_{\chi,x,\Ql'}.
\end{equation*}
In other words, the $\Gk$-module $\calE^{\qm}_{\chi, x,\Ql'}$ is isomorphic to the $\Ql'$-realization of the motive $M_{\chi,x}\in\Mot_{k}(\QQ')$.
\end{prop}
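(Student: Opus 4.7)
The idea is to specialise the global isomorphism \eqref{HQmin} at the point $x$, split off the $\chi$-component via the idempotent $e_\chi$, and then use purity to check that this projection lands in the top-weight subquotient of the compactly supported cohomology. Applying proper base change to \eqref{HQmin} along $x\colon\Spec k\hookrightarrow\X{k}$ yields a canonical $\Gk$-equivariant isomorphism
\[
\cohoc{2h^\vee-2}{\tilY_x,\Ql'}_{\odd}(h^\vee-1)\;\cong\;\bigoplus_{\chi'\in\tilA_{0,\odd}^*}\Ql'[\tilA(\kbar)]_{\chi'}\otimes\calE^{\qm}_{\chi',x,\Ql'}
\]
intertwining the two $\tilA\times\tilA$-actions. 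Now apply the idempotent $e_\chi\in\QQ'[\tilA(\kbar)\times\tilA(\kbar)]^{\Gk}$: by the computation in Section \ref{sss:idem}, $e_\chi$ annihilates $\Ql'[\tilA(\kbar)]_{\chi'}$ for $\chi'\neq\chi$, and on $\Ql'[\tilA(\kbar)]_\chi=\End(V_\chi)$ it is the rank-one projector onto $\Ql'\cdot\id_{V_\chi}$. Extracting this summand gives a canonical $\Gk$-equivariant identification
\[
e_\chi\cdot\cohoc{2h^\vee-2}{\tilY_x,\Ql'}(h^\vee-1)\;\cong\;\calE^{\qm}_{\chi,x,\Ql'}.
\]

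To identify the left-hand side with $\upH(M_{\chi,x},\Ql')=e_\chi\Gr^W_{2h^\vee-2}\cohoc{2h^\vee-2}{\tilY_x,\Ql'}(h^\vee-1)$, it suffices to show that the idempotent summand on the left is pure of weight $2h^\vee-2$: since $\tilY_x$ is smooth of dimension $2h^\vee-2$, its compactly supported cohomology has weights in $[0,2h^\vee-2]$, and a pure weight-$(2h^\vee-2)$ idempotent summand has zero intersection with $W_{2h^\vee-3}\cohoc{2h^\vee-2}{\tilY_x,\Ql'}$ and therefore maps isomorphically onto $e_\chi\Gr^W_{2h^\vee-2}\cohoc{2h^\vee-2}{\tilY_x,\Ql'}$. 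Equivalently, I must show that $\calE^{\qm}_{\chi,x,\Ql'}$ is pure of weight $0$ as a $\Gk$-module. For this I invoke Proposition \ref{p:int}: it extends $\calE^{\qm}_\chi$ to a local system $\un\calE^{\qm}_\chi$ on $\X{\Z}$ whose restriction along every closed point $p\nmid 2\ell N$ is the $\FF_p$-local system $\calE^{\qm}_{\chi,\FF_p}$, which is pure of weight $0$ by Theorem \ref{th:eigen}(4). The monodromy representation on $\calE^{\qm}_{\chi,x,\Ql'}$ then factors through $\pi_1(\Spec\Z)$; every unramified Frobenius $\Frob_p$ acts with weight-$0$ eigenvalues, and Chebotarev density propagates this to the full $\Gk$-representation.

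Combining these steps yields $\upH(M_{\chi,x},\Ql')\cong e_\chi\cohoc{2h^\vee-2}{\tilY_x,\Ql'}(h^\vee-1)\cong\calE^{\qm}_{\chi,x,\Ql'}$. The chief obstacle is the purity step: finite-field purity is built into Theorem \ref{th:eigen}(4) via the Weil conjectures, but transferring it to the number-field $\Gk$-representation genuinely requires both the integral model of Proposition \ref{p:int} and a Chebotarev density argument. The remainder of the proof is straightforward bookkeeping around the fibrewise form of the established global eigen-sheaf isomorphism.
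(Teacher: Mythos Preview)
Your proof is correct and follows essentially the same route as the paper's own argument: specialise \eqref{HQmin} at $x$, use Proposition~\ref{p:int} together with Theorem~\ref{th:eigen}(4) to establish purity of weight zero, and apply the idempotent $e_\chi$ to extract the $\chi$-component and identify it with the top weight-graded piece. One small remark: the invocation of Chebotarev density is unnecessary---purity of a $\Gk$-module is by definition a statement about Frobenius eigenvalues at almost all unramified primes, which is precisely what Proposition~\ref{p:int} and Theorem~\ref{th:eigen}(4) furnish directly, so nothing needs to be ``propagated'' to the full representation.
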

\begin{proof}
Taking stalk of \eqref{HQmin} at $x\in\PP^{1}(k)-\{0,1,\infty\}$, we get an isomorphism of $\Gk$-modules:
\begin{equation}\label{Hqmin}
\cohoc{2h^{\vee}-2}{\tilY_{x},\Ql'}_{\odd}(h^{\vee}-1)\cong\bigoplus_{\chi\in\tilA^{*}_{0,\odd}}\Ql'[\tilA(\kbar)]_{\chi}\otimes_{\Ql}\calE^{\qm}_{\chi,x}.
\end{equation}
By Proposition \ref{p:int}, $\calE^{\qm}_{\chi,x}$ is unramified at large enough prime $p$, and is pure of weight zero under $\Frob_p$ according the purity result proved in Theorem \ref{th:eigen}(4). Using \eqref{Hqmin}, we see that the same purity property holds for $\cohoc{2h^{\vee}-2}{\tilY_{x},\Ql'}_{\odd}(h^\vee-1)$. Moreover, $e_{\chi}$ projects $\cohoc{2h^{\vee}-2}{\tilY_{x},\Ql'}$ to a subspace of the odd part, therefore
\begin{equation*}
\upH(M_{\chi,x},\Ql')\cong e_{\chi}\Gr^{W}_{2h^{\vee}-2}\cohoc{2h^{\vee}-2}{\tilY_{x},\Ql'}(h^{\vee}-1)=e_{\chi}\cohoc{2h^{\vee}-2}{\tilY_{x},\Ql'}_{\odd}(h^{\vee}-1)
\end{equation*}
Making $e_\chi$ act on the right side of \eqref{Hqmin}, we get
\begin{equation*}
\upH(M_{\chi,x},\Ql')\cong e_{\chi}\Ql'[\tilA(\kbar)]_{\chi}\otimes_{\Ql}\calE^{\qm}_{\chi,x}=\Ql'\otimes_{\Ql}\calE^{\qm}_{\chi,x}.
\end{equation*}
\end{proof}

\subsection{Proof of Theorem \ref{th:eigen}}\label{ss:pfeigen}
\begin{prop}\label{p:perv} Let $\calK\in\Sat$ and $\calF\in\Loc_{\tilK}(U)_{\odd}$
\begin{enumerate}
\item $(u_{0}\times\id)^{*}\TT(\calK,j_{!}\calF)[1]\in D^{b}(\X{})$ is a perverse sheaf.
\item If $k$ is a finite field, $(u_{0}\times\id)^{*}\TT(\calK,j_{!}\calF)$ is pure of weight zero.
\end{enumerate}
\end{prop}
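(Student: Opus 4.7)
\smallskip

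\noindent\textbf{Proof proposal.} The plan is to exhibit $(u_{0}\times\id)^{*}\TT(\calK,j_{!}\calF)$ as the proper pushforward of a pure perverse sheaf from $\GR_{\leq\lambda}$ to $\X{}$, and then invoke BBD purity.

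\emph{Reduction and base change.} Since $\Sat$ is semisimple with simples $\IC_{\lambda}$, we may assume $\calK=\IC_{\lambda}$, so that $\calK_{\GR}=\IC_{\GR_{\leq\lambda}}[\jiao{2\rho,\lambda}](\jiao{\rho,\lambda})$ is supported on the closed subscheme $\GR_{\leq\lambda}\subset\GR$, and the projection $\pi_{\leq\lambda}:\GR_{\leq\lambda}\to\X{}$ is proper. Combining proper base change along the right-most square of \eqref{GRGR} with the definition of $\TT$ gives
\begin{equation*}
(u_{0}\times\id)^{*}\TT(\calK,j_{!}\calF)\cong\pi_{\leq\lambda,!}\bigl(\omega^{*}(j_{!}\calF)|_{\GR_{\leq\lambda}}\otimes\IC_{\GR_{\leq\lambda}}\bigr)[\jiao{2\rho,\lambda}](\jiao{\rho,\lambda}).
\end{equation*}
The open immersion $j:\tilK\backslash U\hookrightarrow\Bun$ is Cartesian with the open immersion $j_{\GR}:\GR^{U}\hookrightarrow\GR$, so $\omega^{*}(j_{!}\calF)=(j_{\GR})_{!}\omega^{U,*}\calF$. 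Combined with the projection formula, the bracketed term equals $(j_{\GR,\leq\lambda})_{!}\bigl(\omega^{U,*}\calF\otimes\IC_{\GR_{\leq\lambda}}|_{\GR^{U}_{\leq\lambda}}\bigr)$, where $j_{\GR,\leq\lambda}:\GR^{U}_{\leq\lambda}\hookrightarrow\GR_{\leq\lambda}$.

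\emph{Cleanness on $\GR_{\leq\lambda}$.} The key step is to show that
\begin{equation*}
(j_{\GR,\leq\lambda})_{!}\bigl(\omega^{U,*}\calF\otimes\IC_{\GR_{\leq\lambda}}|_{\GR^{U}_{\leq\lambda}}\bigr)\;=\;(j_{\GR,\leq\lambda})_{!*}\bigl(\omega^{U,*}\calF\otimes\IC_{\GR_{\leq\lambda}}|_{\GR^{U}_{\leq\lambda}}\bigr),
\end{equation*}
i.e.\ that the $!$-extension from $\GR^{U}_{\leq\lambda}$ already equals the intermediate extension. The complex inside the brackets on the right is the twist of the pure IC-complex by the local system $\omega^{U,*}\calF$, hence is a pure perverse sheaf (up to the shift by $[\jiao{2\rho,\lambda}+1]$) on $\GR^{U}_{\leq\lambda}$ of weight $0$. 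The claimed cleanness is a relative analogue of Theorem~\ref{th:clean}: one stratifies $\GR_{\leq\lambda}\setminus\GR^{U}_{\leq\lambda}$ by the intersections with $\omega^{-1}(\text{non-open }\tilK\text{-orbits on }\fl_{G})$ over each $x\in\X{}$, and shows that every stratum contributes trivially to the costalk. The argument of Section~\ref{ss:pfclean} (analyzing the action of $\wt{T^{\tau}}$ through $\mu^{\ker}_{2}$) applies verbatim on each such stratum, since the fibre over $x$ of $\GR^{U}_{\leq\lambda}\hookrightarrow\GR_{\leq\lambda}$ is, up to the finite-dimensional affine Grassmannian slice, the same $\tilK$-equivariant open $U\subset\fl_{G}$ that was treated before.

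\emph{Conclusion via BBD.} Granting the above cleanness, $(j_{\GR,\leq\lambda})_{!}(\omega^{U,*}\calF\otimes\IC_{\GR_{\leq\lambda}}|_{\GR^{U}_{\leq\lambda}})$ is a pure perverse sheaf of weight $0$ on $\GR_{\leq\lambda}$. Its proper pushforward under $\pi_{\leq\lambda}$ is, by BBD purity and the Decomposition Theorem, a direct sum of shifts of pure perverse sheaves of weight $0$ on $\X{}$. Incorporating the shift/twist $[\jiao{2\rho,\lambda}](\jiao{\rho,\lambda})$ and remembering that the source has relative dimension $\jiao{2\rho,\lambda}$ over $\X{}$, one checks that only the middle perverse cohomology survives, yielding perversity of $(u_{0}\times\id)^{*}\TT(\calK,j_{!}\calF)[1]$ as required in (1), and purity of weight $0$ as required in (2).

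\emph{Main obstacle.} The delicate point is the cleanness propagation from $\Bun$ to $\GR_{\leq\lambda}$: the map $\omega:\GR\to\Bun$ is not smooth, so cleanness does not transfer by formal base change. One must carry out the analysis of Section~\ref{ss:pfclean} in the relative setting, verifying that on each non-open $L^{+}_{x}G$-orbit inside $\Gr_{x,\leq\lambda}$ the corresponding equivariant derived category has no odd objects. If one prefers to avoid re-doing the case analysis, an alternative is to use that $\calK_{\GR}$ is universally locally acyclic over $\X{}$ and argue directly that $(j_{\GR})_{!}(\omega^{U,*}\calF\otimes \calK_{\GR}|_{\GR^{U}})$ is ULA over $\X{}$, so that its proper pushforward is automatically a shifted local system on $\X{}$, delivering both perversity and purity in one stroke.
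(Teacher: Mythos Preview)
There is a genuine gap in your ``Conclusion via BBD'' step. Even granting your cleanness claim on $\GR_{\leq\lambda}$ (so that the $!$-extension is the intermediate extension, hence a pure perverse sheaf), the decomposition theorem only tells you that $\pi_{\leq\lambda,*}$ applied to it is a direct sum of \emph{shifted} pure perverse sheaves on $\X{}$. Your assertion that ``only the middle perverse cohomology survives'' is not justified: the relative-dimension bound gives a \emph{range} of possible degrees, not concentration in one. (Compare the proper pushforward of the constant sheaf on any smooth projective family of positive fibre dimension: many perverse cohomology sheaves are nonzero.) The ULA alternative you sketch has the same defect: it would make each perverse cohomology sheaf lisse, but says nothing about vanishing outside one degree.

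The paper supplies precisely the missing ingredient, and it is of a different nature from what you propose. The key technical lemma (Lemma~\ref{l:affine}) is that the morphism $\pi^{U}:\GR^{U}_{\leq\lambda}\to\X{}$ is \emph{affine}; this is proved by identifying $\GR^{U}_{\leq\lambda}$ with the nonvanishing locus of a section of an ample line bundle (the pullback of the inverse determinant bundle) on the projective scheme $\Gr_{x,\leq\lambda}$. Artin's theorem \cite[4.1.1, 4.1.2]{BBD} then gives $\pi^{U}_{!}(\text{perverse})\in\pD^{\geq 0}$ and $\pi^{U}_{*}(\text{perverse})\in\pD^{\leq 0}$. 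The identification of the two pushforwards uses only the cleanness already established on $\Bun$ (Theorem~\ref{th:clean}, i.e.\ $j_{!}\calF=j_{*}\calF$) together with a base-change argument along $\omega$ (using that $\oleft{h}$ is a locally trivial fibration); no new cleanness on $\GR_{\leq\lambda}$ is proved or needed. Purity follows by the same mechanism: $\pi^{U}_{!}$ gives weight $\leq 0$ and $\pi^{U}_{*}$ gives weight $\geq 0$. So the affineness of $\pi^{U}$, not a relative cleanness or decomposition-theorem argument, is what pins the answer to a single perverse degree.
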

\begin{proof}
The proof is a variant of \cite[Section 4.1]{HNY}. Let $\lambda$ be a large enough coweight of $G$ such that $\calK$ is supported in $\Gr_{\leq\lambda}$. We restrict the diagram \eqref{GRGR} to $\GR_{\leq\lambda}$ without changing notations of the morphisms. By \eqref{bch} we have
\begin{equation*}
(u_{0}\times\id)^{*}\TT(\calK,j_{!}\calF)\cong\pi^{U}_{!}(\omega^{U,*}\calF\otimes\calK_{\GR})
\end{equation*}
The complex $\calK_{\GR}$ is in perverse degree 1 on $\GR$ (because $\GR$ has one more dimension than the affine Grassmannian) and it is pure of weight 0. Therefore $\omega^{U,*}\calF\otimes\calK_{\GR}$ is in perverse degree 1 and pure of weight 0. In Lemma \ref{l:affine} below we will show that $\pi^{U}:\GR^{U}_{\leq\lambda}\to\X{}$ is affine, therefore $(u_{0}\times\id)^{*}\TT(\calK,j_{!}\calF)\in \pD^{\geq1}(\Bun\times(\X{}))$ by \cite[Th\'er\`eme 4.1.1]{BBD}. By \cite[Variant 6.2.3 of Th\'eor\`eme 3.3.1]{WeilII}, it has weight $\leq0$.

On the other hand, consider the Cartesian diagram
\begin{equation*}
\xymatrix{\GR^U\ar@/^2pc/[rr]_{\omega^U}\ar[r]^{v^U}\ar@{^{(}->}[d]^{j_{\GR}} & \Hk^U \ar@{^{(}->}[d]^{j_{\Hk}}\ar[r]^{\oleft{h}^U} & \tilK\backslash U\ar@{^{(}->}[d]^{j}\\
\GR\ar@/_2pc/[rr]^{\omega}\ar[r]^{v} & \Hk\ar[r]^{\oleft{h}} & \Bun}
\end{equation*}
Since $\oleft{h}$ is a locally trivial fibration in smooth topology by \cite[Remark 4.1]{HNY}, the natural transformation $\oleft{h}^*j_*\to j_{\Hk,*}\oleft{h}^{U,*}$ is an isomorphism. Therefore
\begin{equation*}
\omega^*j_*=v^*\oleft{h}^*j_*\isom v^*j_{\Hk,*}\oleft{h}^{U,*}\isom j_{\GR,*}v^{U,*}\oleft{h}^{U,*}= j_{\GR,*}\omega^{U,*}.
\end{equation*}
where the second isomorphism uses the fact that $v$ is \'etale. Hence
\begin{eqnarray}\label{T*}
(u_{0}\times\id)^{*}\TT(\calK,j_{*}\calF)&\cong&\pi_{!}(\omega^{*}j_{*}\calF\otimes\calK_{\GR})\cong\pi_{*}\left((j_{\GR,*}\omega^{U,*}\calF)\otimes\calK_{\GR}\right)\\
\notag&\cong&\pi_{*}j_{\GR,*}(\omega^{U,*}\calF\otimes\calK_{\GR})=\pi^{U}_{*}(\omega^{U,*}\calF\otimes\calK_{\GR}).
\end{eqnarray}
Here $\pi_{!}=\pi_{*}$ because $\pi:\GR_{\leq\lambda}\to\X{}$ is proper. Since $\pi^{U}$ is affine and $\omega^{U,*}\calF\otimes\calK_{\GR}$ is in perverse degree 1, we have $(u_{0}\times\id)^{*}\TT(\calK,j_{*}\calF)\in \pD^{\leq1}(\X{})$ by \cite[Corollaire 4.1.2]{BBD}. Since $\omega^{U,*}\calF\otimes\calK_{\GR}$ is pure of weight 0, by the dual statement of \cite[6.2.3]{WeilII}, $(u_{0}\times\id)^{*}\TT(\calK,j_{*}\calF)$ has weight $\geq0$.

Since $j_{!}\calF=j_{*}\calF$ by Theorem \ref{th:clean}, the above argument shows that $(u_{0}\times\id)^{*}\TT(\calK,j_{*}\calF)$ is concentrated in perverse degree 1 and is pure of weight 0.
\end{proof}

\begin{lemma}\label{l:affine}
The morphism  $\pi^{U}:\GR^{U}_{\leq\lambda}\to\X{}$ is affine.
\end{lemma}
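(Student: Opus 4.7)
The plan is to realize $\GR^U_{\leq\lambda}\hookrightarrow\GR_{\leq\lambda}$ as the non-vanishing locus of a section of a line bundle on $\GR_{\leq\lambda}$ that is ample relative to the projective morphism $\pi$; this forces $\pi^U$ to be affine. To construct such a section, I would first exhibit $\Bun\setminus(\tilK\backslash U)$ as the zero locus of a section of a line bundle on $\Bun$, and then pull back via $\omega$.

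The complement decomposes into two pieces: (a) where the image of $\calP$ in $\Bun_G(\bP_0,\bP_\infty)$ is non-trivial, and (b) where the image lies in $\BB K$ but the Iwahori at $t=1$ lies outside the open $K$-orbit on $\fl_G$. For (a), Lemma \ref{l:j0affine} produces a section $s_1$ of a line bundle $\calL_1$ on $\Bun$ (pulled back from the inverse determinant line bundle on $\Bun_d$ via $\Ad^+$) whose non-vanishing locus is the preimage of $\BB K$. For (b), observe that $U\cong K/A$ with $A=T^K[2]$ finite is affine, hence its complement in the projective flag variety $\fl_G$ supports an ample divisor; a sufficiently high power of the anticanonical $\calO_{\fl_G}(2\rho)$ admits a section whose zero locus is this complement. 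This line bundle extends to $\Bun$ as $\calL_2$ via the tautological Borel reduction at $t=1$ supplied by the Iwahori level structure. After multiplying by a suitable power of $s_1$ to clear poles along the locus where the section of $\calL_2$ fails to extend, one obtains a regular global section $s$ of $\calL_1^M\otimes\calL_2^N$ on $\Bun$ whose zero locus is exactly $\Bun\setminus(\tilK\backslash U)$.

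Pulling back along $\omega$ produces a section of $\omega^*(\calL_1^M\otimes\calL_2^N)$ on $\GR_{\leq\lambda}$ whose non-vanishing locus is $\GR^U_{\leq\lambda}$. The remaining step is to verify that this line bundle is $\pi$-ample on $\GR_{\leq\lambda}$ after possibly increasing $M$ further. Fibrewise over $\X{}$, the restriction $\omega_x^*\calL_1$ is (a power of) the determinant-of-cohomology line bundle on the affine Schubert variety $\Gr_{\leq\lambda}$, which is well known to be ample; tensoring with the pullback of an arbitrary line bundle and then raising to a sufficiently high power preserves ampleness. The main technical obstacle is precisely this relative ampleness verification, which requires tracing the line-bundle identifications across the modification morphism $\omega$ and carefully handling the fact that the Iwahori at $t=1$ of a modified bundle is only identified with a point of $\fl_G$ once one chooses an isomorphism of the modified bundle with $\calP_0$ in $\Bun_G(\bP_0,\bP_\infty)$. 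Granting this step, $\GR^U_{\leq\lambda}$ is relatively affine over $\X{}$ as the complement of the zero locus of a section of a $\pi$-ample line bundle on a projective $\X{}$-scheme, which concludes the proof.
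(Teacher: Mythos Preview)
Your overall strategy matches the paper's: exhibit $\GR^U_{\leq\lambda}$ as the non-vanishing locus of a section of a $\pi$-ample line bundle on the projective $\GR_{\leq\lambda}$. Your treatment of piece (a) via the determinant line bundle $\calL_1=\omega^*\calL_{\det}^{-1}$ is exactly what the paper does, and you are right that relative ampleness of this line bundle is the main technical point; the paper establishes it by embedding $\Gr_{x,\leq\lambda}$ into a finite Grassmannian via lattices and identifying $\omega^*\calL_{\det}^{-1}$ with the Pl\"ucker bundle.

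Where you diverge is in your handling of piece (b), and here the paper's route is both simpler and avoids a real gap in yours. You attempt to cut out the complement of $U$ inside $\fl_G$ by a section of a second line bundle $\calL_2$, extend it to all of $\Bun$ by ``clearing poles'' with powers of $s_1$, and then control ampleness of the tensor product. The clearing-poles step is stated without justification; it requires knowing that $\Bun$ is normal and that the complement of $\tilK\backslash\fl_G$ is a Cartier divisor, neither of which you address. Moreover you then need ampleness of $\omega^*(\calL_1^M\otimes\calL_2^N)$, which forces you to first prove ampleness of $\omega^*\calL_1$ anyway and then argue that tensoring with $\omega^*\calL_2^N$ can be absorbed.

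The paper sidesteps all of this: since $U$ is an affine variety and $\fl_G$ is separated, the open immersion $U\hookrightarrow\fl_G$ is automatically an affine morphism, and affineness of morphisms is stable under base change. Hence $\GR^U_{\leq\lambda}\hookrightarrow\GR^\star_{\leq\lambda}:=\omega^{-1}(\tilK\backslash\fl_G)$ is affine, and one is reduced to showing $\GR^\star_{\leq\lambda}$ is affine over $\X{}$. This last scheme is precisely the non-vanishing locus of the pulled-back section $s_1$ of $\omega^*\calL_{\det}^{-1}$, so only the ampleness of $\omega^*\calL_1$ is needed. Replacing your $\calL_2$ construction with this one-line observation about $U$ would close the gap and simplify the argument considerably.
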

\begin{proof}
Fix a point $x\in(\X{})(R)$ for some finitely generated $k$-algebra $R$. We will argue that the fiber $\Gr^{U}_{x,\leq\lambda}$ of $\GR^{U}_{\leq\lambda}$ over $x$ is an affine scheme. 

The embedding $j$ factors as $\tilK\backslash U\hookrightarrow\tilK\backslash\fl_{G}\xrightarrow{j_{1}}\Bun$, therefore we have $\Gr^{U}_{x,\leq\lambda}\hookrightarrow\GR^{\star}_{x,\leq\lambda}\hookrightarrow\Gr_{x,\leq\lambda}$, where $\Gr^{\star}_{x,\leq\lambda}=\omega^{-1}(\tilK\backslash\fl_{G})$. Since $U$ is itself affine, so are the open embedding $\tilK\backslash U\hookrightarrow\tilK\backslash\fl_{G}$ and its base change $\Gr^{U}_{x,\leq\lambda}\hookrightarrow\Gr^{\star}_{x,\leq\lambda}$. Therefore it suffices to show that $\Gr^{\star}_{x,\leq\lambda}$ is affine. 

Consider the morphism
\begin{equation*}
\beta:\Gr_{x,\leq\lambda}\xrightarrow{\omega}\Bun\to\Bun_{G}(\bP_{0},\bP_{\infty})\xrightarrow{\Ad^{+}}\Bun_{d}.
\end{equation*}
Here $\Ad^{+}$ is defined in the proof of Lemma \ref{l:j0affine}. By definition, $\Gr^{\star}_{x,\leq\lambda}$ is the preimage of $\tilK\backslash\fl_{G}\subset\Bun$ under $\oleft{h}$, hence the preimage of $\{\star\}/\tilK\subset\Bun_{G}(\bP_{0},\bP_{\infty})$, which is in turn the preimage of the open substack $\{\calO(-1)^{d}\}/\GL_{d}\subset\Bun_{d}$ under $\beta$, by the Cartesian diagram \eqref{CartBun}. The open substack $\{\calO(-1)^{d}\}/\GL_{d}$ is the non-vanishing locus of a section of the inverse determinant line bundle $\calL_{\det}^{-1}$ on $\Bun_{d}$. Therefore, $\Gr^{\star}_{x,\leq\lambda}$ is the non-vanishing locus of a section of $\omega^{*}\calL_{\det}^{-1}$. In order to show that $\Gr^{\star}_{x,\leq\lambda}$ is affine, it suffices to show that $\omega^{*}\calL_{\det}^{-1}$ is ample on $\Gr_{x,\leq\lambda}$ relative to the base $\Spec R$. 

Every point $(\calP,\iota)\in\Gr_{x,\leq\lambda}(S)$ ($S$ is an $R$-algebra) consists of $\calP\in\Bun_{G}(\bP_{0},\bP_{\infty})(S)$ together with an isomorphism $\iota:\calP|_{\PP^{1}_{S}-\Gamma(x_{S})}\isom\calP_{0}|_{\PP^{1}_{S}-\Gamma(x_{S})}$, where $\Gamma(x_{S})\subset\PP^{1}_{S}$ is the graph of $x$ viewed as an $S$-point of $\PP^{1}$, and $\calP_{0}\in\Bun_{G}(\bP_{0},\bP_{\infty})(k)$ is the fixed trivial $G$-torsor. The line bundle $\omega^{*}\calL_{\det}^{-1}$ assigns to $(\calP,\iota)$ the line $\det_{S}\bR\Gamma(\PP^{1}_{S},\Ad^{+}(\calP))^{-1}$.
 
The $R$-point $x$ corresponds to an algebra homomorphism $k[t,t^{-1}, (t-1)^{-1}]\to R$. Let $t_R$ be the image of $R$ and let $t_{x}=t-t_{R}\in R[t]$. Then $\calO_{x,S}=S[[t_{x}]]$ is the completion of $\PP^1_S$ along $\Gamma(x_{S})$. Let $F_{x,S}=S((t_{x}))$. The restriction of $\iota$ to $\Spec F_{x,S}$ gives a trivialization of $\calP$ there, and the adjoint vector bundle $\Ad^{+}(\calP)|_{\Spec \calO_{x,S}}$ determines an $\calO_{x,S}$-submodule $\Lambda\subset\Lambda_{0,S}\otimes_{\calO_{x,S}}F_{x,S}$, where $\Lambda_{0,S}=\Ad^{+}(\calP_{0})|_{\Spec\calO_{x,S}}$. The following discussion is taken from \cite[Section 2]{Floop}. Since we consider only $\Gr_{x,\leq\lambda}$ rather than the whole $\Gr_{x}$, there is a universal constant $N$ such that
\begin{equation*}
t^{N}_x\Lambda_{0,S}\subset\Lambda\subset t_x^{-N}\Lambda_{0,S}
\end{equation*}
for all $\Lambda$ coming from $(\calP,\iota)\in\Gr_{x,\leq\lambda}$, and $\Lambda/t^{N}_x\Lambda_{0}$ is a locally free $S$-module of rank equal to $M:=\rk_{S}(\Lambda_{0,S}/t^{N}_x\Lambda_{0,S})=N\dim G$. Moreover, the datum of $\Lambda$ uniquely determines $(\calP,\iota)$, so that we get a closed embedding
\begin{equation*}
\Gr_{x,\leq\lambda}\hookrightarrow\Gr(M,2M)\otimes_{k}R.
\end{equation*} 
Here $\Gr(M,2M)$ is the usual Grassmannian of $M$-dimensional linear subspaces in $\AA^{2M}$. The assignment $\calL_{\Gr}:\Lambda\mapsto\det_{S}(t^{-N}_x\Lambda_{0,S}/\Lambda)$ is a very ample line bundle on $\Gr(M, 2M)$ since it gives the Pl\"ucker embedding. Therefore $\calL_{\Gr}$ is also ample on $\Gr_{x,\leq\lambda}$ (relative to the base $R$). 

Since $\Ad^{+}(\calP)$ and $\Ad^{+}(\calP_{0})$ only differ at $x$,
\begin{equation*}
\det_{S}\bR\Gamma(\PP^{1}_{S},\Ad^{+}(\calP))^{-1}\otimes_{S}\det_{S}\bR\Gamma(\PP^{1}_{S},\Ad^{+}(\calP_{0}))\cong\det_{S}(t^{-N}_x\Lambda_{0,S}/\Lambda)\otimes_{S}\det_{S}(t^{-N}_x\Lambda_{0,S}/\Lambda_{0,S})^{-1}.
\end{equation*}
Therefore $\omega^{*}\calL_{\det}^{-1}\cong\calL_{\Gr}$ since the other lines appearing above are constant lines only depending on $\calP_{0}$. Hence $\omega^{*}\calL_{\det}^{-1}$ is also ample on $\Gr_{x,\leq\lambda}$ relative to $R$. This proves the lemma.
\end{proof}

\subsubsection{Preservation of the character $\chi$}\label{sss:chi}
We first show a general result which works for all $G$ satisfying \eqref{ass:-1}. We assume $k$ is algebraically closed and we work with $\Qlbar$-coefficients. Let $\tZG$ be the preimage of the center of $G$ (which is contained in $K$) in $\tilK$. Every object $\calP\in\Bun_G(\bP_0,\bI_1,\bP_\infty)$ has automorphisms by $ZG$; similarly, every object $\calP\in\Bun$ has automorphisms by $\tZG$. In other words, the classifying stack $\BB(\tZG)$ acts on the stack $\Bun$. In fact, the whole Hecke correspondence diagram \eqref{Hkcorr} is $\BB(\tZG)$-equivariant. For every character $\psi:\tZG\to\Qlbar^\times$, we have the subcategory $D^b(\Bun)_\psi\subset D^b(\Bun)$ consisting of complexes on which $\tZG$ acts through $\psi$. By the $\BB(\tZG)$-equivariance of the diagram (the action is trivial on $\calK\in\Sat$), the geometric Hecke operators $\TT(\calK,-)$ necessarily send $D^b(\Bun)_\psi$ to $D^b(\Bun\times(\X{}))_\psi$.

Now back to the situation of Theorem \ref{th:eigen}, where we have $\tZG=\tilA_0$. For any $\chi\in\tilA_0(\kbar)^*_{\odd}$ and any $\calK\in\Sat$, the above discussion shows that $\TT(\calK,j_!\calF_\chi)\in D^b(\Bun\times(\X{}))_\chi$.

\subsubsection{Hecke eigen property with assumption \eqref{ass:i}}\label{sss:withi} By Proposition \ref{p:perv}(1), $\TT(\calK,j_{!}\calF_{\chi})$ is concentrated in perverse degree 1 so Lemma \ref{l:writeE} is applicable. Therefore we can write canonically
\begin{equation*}
\TT(\calK,j_{!}\calF_{\chi})=\TT(\calK,j_{!}\calF_{\chi})_{\chi}\cong j_{!}\calF_{\chi}\boxtimes\left(V^{*}_{\chi}\otimes(u_{0}\times\id)^{*}\TT(\calK,j_{!}\calF_{\chi})\right)^{\tilA}
\end{equation*}
Here the first equality follows from the discussion in Section \ref{sss:chi}. We define
\begin{equation}\label{defineE}
\calE_{\chi}(\calK)_{{\Qlbar}}:=\left(V^{*}_{\chi}\otimes(u_{0}\times\id)^{*}\TT(\calK,j_{!}\calF_{\chi})\right)^{\tilA}
\end{equation}
which is a $\Qlbar$-complex on $\X{}$ concentrated in perverse degree 1.

Recall that the construction of $\calF_{\chi}$ (or the $\Gamma$-module $V_{\chi}$) is not completely canonical: we are free to twist it by a continuous character $\psi$ of $\Gk$. However, we have a canonical isomorphism
\begin{equation*}
\left(V_{\chi}(\psi)^{*}\otimes(u_{0}\times\id)^{*}\TT(\calK,j_{!}\calF_{\chi}(\psi))\right)^{\tilA}\cong\left(V^{*}_{\chi}\otimes(u_{0}\times\id)^{*}\TT(\calK,j_{!}\calF_{\chi})\right)^{\tilA}.
\end{equation*}
Therefore $\calE_{\chi}(\calK)_{\Qlbar}$ is completely canonical.
 
Using the monoidal structure of $\TT(-,j_{!}\calF_{\chi})$ as spelled out in Section \ref{ss:geomHk}, we get canonical isomorphisms
\begin{equation}\label{Etensor}
\varphi_{\calK_{1},\calK_{2}}:\calE_{\chi}(\calK_{1})_{\Qlbar}\otimes\calE_{\chi}(\calK_{2})_{\Qlbar}\isom\calE_{\chi}(\calK_{1}*\calK_{2})_{\Qlbar}
\end{equation}
for any two $\calK_{1},\calK_{2}\in\Sat$. These isomorphisms are compatible with the associativity and commutativity constraints (for the commutativity constraint, we use Mirkovi\'c and Vilonen's construction of the fusion product for the Satake category). Once we have the tensor property \eqref{Etensor}, we can use the argument of \cite[Section 4.2]{HNY} to show that each $\calE_{\chi}(\calK)_{\Qlbar}$ must be a local system on $\X{}$. Therefore $\calE_{\chi}(-)_{\Qlbar}$ gives a tensor functor
\begin{equation*}
\calE_{\chi,\Qlbar}:\Sat\to\Loc(\X{},\Qlbar)
\end{equation*}
which serves as the eigen local system of the Hecke eigensheaf $j_{!}\calF_{\chi}$. This proves statement (1) of Theorem \ref{th:eigen}(1). The purity statement (4) of Theorem \ref{th:eigen} follows from Proposition \ref{p:perv}(2).

\subsubsection{Descent of base field}\label{sss:descent} We now prove statement (2) in Theorem \ref{th:eigen}. We only need to consider the case $\sqrt{-1}\notin k$ and $G$ is of type $A_{1}, D_{4n+2}$ or $E_{7}$. Let $k'=k(\sqrt{-1})$. In the previous section we constructed $\calE_{\chi,k',\Qlbar}$ over $\X{k'}$. For a stack $X$ over $k$, we denote its base change to $k'$ by $X_{k'}$ and let $\sigma:X_{k'}\to X_{k'}$ be the involution induced from the nontrivial involution $\sigma\in\Gal(k'/k)$. 

Let $\Gamma'=\tilA(\kbar)\rtimes\Gal(\kbar/k')$, which is a subgroup of $\Gamma$ of index 2. The construction in Section \ref{sss:RepGamma} only gives a $\Gamma'$-module $V_{\chi}$ since we assumed \eqref{ass:i} there. The action of the involution $\sigma\in\Gal(k'/k)$ changes the central character $\chi$ to $\chibar$. Therefore we have an isomorphism of $\Gamma'$-modules $\alpha:\sigma^{*}V_{\chi}\isom V_{\chibar}(\psi)$ for some character $\psi$ of $\Gal(\kbar/k')$, unique up to a scalar. Similarly, $\alpha$ induces an isomorphism $\sigma^{*}\calF_{\chi}\cong\calF_{\chibar}(\psi)$, and hence an isomorphism
\begin{eqnarray*}
\sigma^{*}\left(V^{*}_{\chi}\otimes(u_0\times\id)^*\TT(\calK, j_!\calF_{\chi})\right)^{\tilA}&\isom&\left(V_{\chibar}(\psi)^{*}\otimes(u_0\times\id)^*\TT(\calK, j_!\calF_{\chibar}(\psi))\right)^{\tilA}\\
&=&\left(V^{*}_{\chibar}\otimes(u_0\times\id)^*\TT(\calK, j_!\calF_{\chibar})\right)^{\tilA}
\end{eqnarray*}
which is completely canonical because we used $\alpha$ twice. Using \eqref{defineE}, we get a canonical isomorphism
\begin{equation*}
\sigma^{*}\calE_{\chi,k'}(\calK)_{\Qlbar}\isom\calE_{\chibar,k'}(\calK)_{\Qlbar}
\end{equation*}
which is clearly compatible with the convolution on $\calK\in\Sat$. Therefore we get an isomorphism of tensor functors
\begin{equation}\label{tensorsigma}
\jmath_{\chi}:\sigma^{*}\calE_{\chi,k'}(-)_{\Qlbar}\isom\calE_{\chibar,k'}(-)_{\Qlbar}:\Sat\to\Loc(\X{k'},\Qlbar).
\end{equation}

We now exhibit a canonical isomorphism $\calE_{\chi,k'}(\calK)^{\vee}_{\Qlbar}\isom\calE_{\chibar,k'}(\DD\calK)_{\Qlbar}$. For a stack $S$ over $\X{}$ with structure morphism $s:S\to\X{}$, we denote by $\DD^{\rel}$ the Verdier duality functor on $D^{b}(S)$ relative to $\X{}$: $\DD^{\rel}(\calF)=\bR\uHom_{S}(\calF,\DD_{s})$, where $\DD_{s}=s^{!}\Qlbar$ is the relative dualizing complex of $s$. 

We have an isomorphism
\begin{eqnarray}\notag
\calE_{\chi,k'}(\calK)^{\vee}_{\Qlbar}&=&\left(V_{\chi}^{*}\otimes(u_{0}\times\id)^{*}\TT(\calK,j_{!}\calF_{\chi})\right)^{\tilA,\vee}\\
\label{DDE}&\cong&\left(V_{\chi}\otimes\DD^{\rel}(u_{0}\times\id)^{*}\TT(\calK,j_{!}\calF_{\chi})\right)^{\tilA}.
\end{eqnarray}
By \eqref{bch}, we have
\begin{eqnarray*}
\DD^{\rel}(u_{0}\times\id)^{*}\TT(\calK,j_{!}\calF_{\chi})&\cong&\DD^{\rel}\pi^{U}_{!}(\omega^{U,*}\calF\otimes\calK_{\GR,\Qlbar})\cong\pi^{U}_{*}(\omega^{U,*}\calF^{\vee}_{\chi}\otimes\DD^{\rel}\calK_{\GR,\Qlbar})\\
&\cong&\pi^{U}_{*}(\omega^{U,*}\calF^{\vee}_{\chi}\otimes(\DD\calK)_{\GR,\Qlbar})
\cong(u_{0}\times\id)^{*}\TT(\DD\calK,j_{!}\calF_{\chi}^{\vee}).
\end{eqnarray*}
The last equality follows from \eqref{T*}. Plugging this into \eqref{DDE}, we get a canonical isomorphism
\begin{equation}\label{DE}
\calE_{\chi,k'}(\calK)^{\vee}_{\Qlbar}\cong\left(V_{\chi}\otimes(u_{0}\times\id)^{*}\TT(\DD\calK,j_{!}\calF_{\chi}^{\vee})\right)^{\tilA}.
\end{equation}
Fix a $\Gamma'$-isomorphism $\beta: V^{*}_{\chi}\cong V_{\chibar}(\psi)$ for some character $\psi$ of $\Gal(\kbar/k')$. This also induces an isomorphism $\calF^{\vee}_{\chi}\cong\calF_{\chibar}(\psi)$. Using $\beta$ twice, we may rewrite \eqref{DE} as
\begin{equation}\label{dualE}
\calE_{\chi,k'}(\calK)^{\vee}_{\Qlbar}\isom\left(V_{\chibar}(\psi)^{*}\otimes(u_{0}\times\id)^{*}\TT(\DD\calK,j_{!}\calF_{\chibar}(\psi))\right)^{\tilA}\cong\calE_{\chibar,k'}(\DD\calK)_{\Qlbar}.
\end{equation}
which is completely canonical. One checks that \eqref{dualE} is compatible with the convolution structure of $\Sat$, hence giving an isomorphism of tensor functors
\begin{equation*}
\calE_{\chi,k'}(-)_{\Qlbar}\isom\calE_{\chibar,k'}(\DD(-))^{\vee}_{\Qlbar}:\Sat\to\Loc(\X{k'},\Qlbar).
\end{equation*}
Since $-1\in W$, Verdier duality $\DD$ on $\Sat$ serves as duality in this tensor category. Therefore the tensor functor $\calE_{\chi,k'}$ should intertwine $\DD$ and the linear duality on $\Loc(\X{k'})$: we have a canonical isomorphism of tensor functors $(\calE_{\chibar,k',\Qlbar}\circ\DD)^{\vee}\cong\calE_{\chibar,k',\Qlbar}$. Therefore \eqref{dualE} induces a canonical tensor isomorphism of tensor functors
\begin{equation}\label{nobar}
\calE_{\chi,k',\Qlbar}\isom\calE_{\chibar,k',\Qlbar}:\Sat\to\Loc(\X{k'},\Qlbar).
\end{equation}
Combining the tensor isomorphisms \eqref{tensorsigma} and \eqref{nobar}, we get a canonical tensor isomorphism
\begin{equation*}
\sigma^{*}\calE_{\chi,k',\Qlbar}\isom \calE_{\chi,k',\Qlbar}:\Sat\to\Loc(\X{k'},\Qlbar).
\end{equation*}
Canonicity guarantees that this isomorphism gives a descent datum for each $\calE_{\chi,k'}(\calK)_{\Qlbar}$ from $\X{k'}$ to $\X{k}$, which is compatible with the convolution structure on $\Sat$ as $\calK$ varies, so that $\calE_{\chi,k',\Qlbar}$ descends to a tensor functor which we give the same name
\begin{equation*}
\calE_{\chi,k',\Qlbar}:\Sat\to\Loc(\X{k},\Qlbar).
\end{equation*}
The isomorphism \eqref{nobar} is also compatible with the descent datum so it gives a canonical isomorphism of functors $\calE_{\chi,k',\Qlbar}\isom\calE_{\chibar,k',\Qlbar}$ taking values in $\Loc(\X{k},\Qlbar)$. This finishes the proof of Theorem \ref{th:eigen}(2).

\subsubsection{Rationality of the coefficients}\label{sss:Ql} Finally we prove the statement (3) in Theorem \ref{th:eigen}. It is clear from the construction that we may replace $\Qlbar$ by a finite Galois extension $E$ over $\Ql$. Therefore, for each $\calK\in\Sat$, we have an $E$-local system $\calE_{\chi}(\calK)_{E}$ on $\X{}$. For each $\tau\in\Gal(E/\Ql)$, and any complex or vector space $\calH$ with $E$-coefficients, we use $\leftexp{\tau}{\calH}$ to denote $E\otimes_{E,\tau}\calH$.

First suppose $G$ is of type $D_{4n}, E_{8}$ or $G_{2}$, then all $\chi\in\tilA^{*}_{0,\odd}$ take values in $\pm1$. The $\Gamma$-module $\leftexp{\tau}{V}_{\chi}$ is again an irreducible $\tilA(\kbar)$-module with central character $\chi$, therefore there exists some character $\psi$ of $\Gk$ and an isomorphism $\alpha:\leftexp{\tau}{V}_{\chi}\cong V_{\chi}(\psi)$, well defined up to a scalar in $E^{\times}$. This $\alpha$ also induces an isomorphism $\leftexp{\tau}{\calF}_{\chi}\isom\calF_{\chi}(\psi)$. Since $\calK$ is defined with $\Ql$-coefficients, we have an isomorphism
\begin{equation}\label{fff}
\leftexp{\tau}{\left(V_{\chi}^{*}\otimes(u_0\times\id)^*\TT(\calK, j_!\calF_{\chi})\right)}^{\tilA}\isom\left(V_{\chi}(\psi)^{*}\otimes(u_0\times\id)^*\TT(\calK, j_!\calF_{\chi}(\psi))\right)^{\tilA}
\end{equation}
which uses $\alpha$ twice, hence completely canonical even though $\alpha$ is only defined up to a scalar. This gives a canonical isomorphism
\begin{equation*}
\varphi_{\tau}:\leftexp{\tau}{\calE}_{\chi}(\calK)_{E}\cong\calE_{\chi}(\calK)_{E}.
\end{equation*}
Canonicity guarantees that the collection of isomorphism $\{\varphi_{\tau}\}_{\tau\in\Gal(E/\Ql)}$ gives a descent datum of $\calE_{\chi}(\calK)_{E}$ to $\Ql$-coefficients, and these descent data are compatible with the tensor structure as $\calK$ varies. Therefore $\calE_{\chi,E}$ descends to a $\hatG(\Ql)$-local system $\calE_{\chi}$ on $\X{}$.

Now suppose $G$ is of type $A_{1}, D_{4n+2}$ or $E_{7}$. Then all characters $\chi\in\tilA_{0}(\kbar)^{*}_{\odd}$ take values in $\Ql'=\Ql(\sqrt{-1})$. The above argument gives descent datum of $\calE_{\chi,E}$ to a $\hatG(\Ql')$-local system $\calE_{\chi,\Ql'}$. If $\Ql'\neq\Ql$, we need to further descend from $\Ql'$ to $\Ql$. Let $\tau\in\Gal(\Ql'/\Ql)$ be the nontrivial involution. Then the argument above shows there is a canonical isomorphism between local systems on $\X{k'}$ ($k'=k(\sqrt{-1})$):
\begin{equation*}
\leftexp{\tau}{\calE}_{\chi,k'}(\calK)_{\Ql'}\isom\calE_{\chibar,k'}(\calK)_{\Ql'}.
\end{equation*}
The canonical isomorphism $\calE_{\chi,k',\Qlbar}\cong\calE_{\chibar,k',\Qlbar}$ in statement (2) of Theorem \ref{th:eigen} actually descends to $\Ql'$ by checking the proofs, and we get a canonical isomorphism
\begin{equation*}
\leftexp{\tau}{\calE}_{\chi,k'}(\calK)_{\Ql'}\isom\calE_{\chibar,k'}(\calK)_{\Ql'}.
\end{equation*}
It is easy to check that this give a descent datum of $\calE_{\chi,k',\Ql'}$ to a $\hatG(\Ql)$-local system $\calE_{\chi,k'}$ on $\X{k'}$. One can also check that this descent datum is compatible with the descent datum of $\calE_{\chi,k',\Ql'}$ from $\X{k'}$ to $\X{k}$ given in Section \ref{sss:descent}. We omit details here. This finishes the proof of Theorem \ref{th:eigen}.


\section{Local and global monodromy}
For most part of this this section, we fix $\chi\in\tilA_{0}(\kbar)^{*}_{\odd}$, and denote $\calE_{\chi}$ simply by $\calE$. The results in this section will be insensitive to $\chi$.

To emphasize the dependence on the base field, we use $\calE_{k}$ to denote the local system $\calE$ over $\X{k}$. The monodromy representation of $\calE_{k}$ is
\begin{equation*}
\rho_{k}:\pi_1(\X{k})\to\hatG(\Ql).
\end{equation*}
The goal of this section is to study the image of $\rho_{k}$ as well as its restriction to certain subgroups of $\pi_{1}(\X{k})$.

\subsection{Remarks on Gaitsgory's nearby cycles}\label{ss:genloc}
We first make some general remarks about the calculation of local monodromy via Gaitsgory's nearby cycle construction \cite{Ga}. 

\subsubsection{Hecke operators at ramified points} Let $X$ be a complete smooth connected curve over an algebraically closed field $k$ and $S$ be a finite set of points on $X$. Let $\{\bP_{x}\}$ be a set of level structures, one for each $x\in S$. For each $x\in S$, we have a Hecke correspondence
\begin{equation*}
\xymatrix{& \Hk_{x}\ar[rr]^{\inv_{x}}\ar[dl]_{\oleft{h}_{x}}\ar[dr]^{\oright{h}_{x}} && \bP_{x}\backslash L_{x}G/\bP_{x}\\
\Bun_{G}(\bP_{x};x\in S) & & \Bun_{G}(\bP_{x};x\in S)}
\end{equation*}
which classifies triples $(\calP,\calP',\iota)$ where $\calP,\calP'\in\Bun_{G}(\bP_{x};x\in S)$ and $\iota:\calP|_{X-S}\isom\calP'|_{X-S}$ preserving the level structures at $S-\{x\}$.

Analogous to the geometric Hecke operators, we may define an action of the monoidal category $D^{b}(\bP_{x}\backslash LG/\bP_{x})$ on $D^{b}(\Bun_{G}(\bP_{x};x\in S))$ as follows. For $\calK\in D^{b}(\bP_{x}\backslash LG/\bP_{x})$ and $\calF\in D^{b}(\Bun_{G}(\bP_{x};x\in S))$, we define
\begin{equation*}
\TT_{x}(\calK,\calF):=\oright{h}_{x,!}(\oleft{h}^{*}_{x}\calF\otimes\inv_{x}^{*}\calK).
\end{equation*}

We also need the Hecke modifications at two points, one moving point not in $S$ and the other is $x\in S$. Let $S^{x}=S-\{x\}$, we have a diagram
\begin{equation*}
\xymatrix{& \Hk_{X-S^{x}}\ar[rr]^{\inv}\ar[dl]_{\oleft{h}}\ar[dr]^{\oright{h}} && \left[\dfrac{L^{+}G\backslash LG/L^{+}G}{\Aut_{\calO}}\right]\\
\Bun_{G}(\bP_{x};x\in S) & & \Bun_{G}(\bP_{x};x\in S)\times(X-S^{x})}
\end{equation*}
The stack $\Hk_{X-S^{x}}$ classifies $(y,\calP,\calP',\iota)$ where $y\in X-S^x, \calP,\calP'\in\Bun_{G}(\bP_{x};x\in S)$ and $\iota:\calP|_{X-S-\{y\}}\isom\calP'|_{X-S-\{y\}}$ preserving the level structures at $S-\{x\}$. The map ``inv'' records the relative position of $\calP$ and $\calP'$ on the formal neighborhood of $y$. In particular, $\Hk_{x}\subset\Hk_{X-S^{x}}$ is the fiber at $y=x$.

\subsubsection{Parahoric version of Gaitsgory's construction} We recall the setting of \cite{Ga}. Fixing a base point $u_{0}:\Spec k\to\Bun_{G}(\bP_{x};x\in S)$. Let $\GR_{X\backslash S^{x}}=\oright{h}^{-1}(\{u_{0}\}\times(X\backslash S^{x}))$. Let $\Fl_{\bP_{x}}=L_{x}G/\bP_{x}$ be the affine partial flag variety associated to $\bP_{x}$. The family $\GR_{X-S^{x}}\to X-S^{x}$ interpolates $\Gr_{y}\times\Fl_{\bP_{x}},y\notin S$ and $\Fl_{\bP_{x}}$ at $x$:
\begin{equation*}
\xymatrix{\Fl_{\bP_{x}}\ar[r]\ar[d] & \GR_{X-S^{x}}\ar[d] & \GR_{X- S}\times\Fl_{\bP_{x}}\ar[d]\ar[l]\\
\{x\}\ar[r] & X-S^{x} & X- S\ar[l]}
\end{equation*}
The nearby cycles functor defines a functor
\begin{equation*}
\Psi_{\bP_{x}}:\Sat\times D^{b}(\Fl_{\bP_{x}})\to D^{b}(\Fl_{\bP_{x}})
\end{equation*}
sending $(\calK,\calF)$ to the nearby cycles of $\calK_{\GR}\boxtimes\calF$, where $\calK_{\GR}\in D^{b}(\GR_{X- S})$ is the spread-out of $\calK\in\Sat$ as in Section \ref{sss:BD}. Setting $\calF$ to be the skyscraper sheaf $\delta_{\bP_{x}}$ at the base point of $\Fl_{\bP_{x}}$, we get a t-exact functor (by the exactness of nearby cycles functor)
\begin{eqnarray*}
Z_{\bP_{x}}:\Sat &\to&\Perv(\Fl_{\bP_{x}})\\
\calK&\mapsto&\Psi(\calK\boxtimes\delta_{\bP_{x}}).
\end{eqnarray*}
When $\bP_{x}=\bI_{x}$, we recover Gaitsgory's original nearby cycles functor
\begin{equation*}
Z_{x}:=Z_{\bI_{x}}:\Sat\to\Perv(\Fl_{x})
\end{equation*}
In \cite[Theorem 1]{Ga}, Gaitsgory proves that $Z_{x}(\calK)$ is in fact left-$\bI_{x}$-equivariant and is convolution exact and central: for any $\calF\in\Perv(\bI_{x}\backslash L_{x}G/\bI_{x})$, $Z_{x}(\calK)*\calF$ is perverse and there is a canonical isomorphism $Z_{x}(\calK)*\calF\cong\calF*Z_{x}(\calK)$. Here $*$ denotes the convolution product on $D^{b}(\bI_{x}\backslash L_{x}G/\bI_{x})$.

Now we relate $Z_{\bP_{x}}$ to $Z_{x}$. Define $\GR'_{X- S^{x}}$ in a similar way as $\GR_{X- S^{x}}$, except we replace $\bP_{x}$ by $\bI_{x}$. We have a commutative diagram
\begin{equation*}
\xymatrix{\Fl_{x}\ar@{^{(}->}[r]\ar[d]^{p_{x}} & \GR'_{X- S^{x}}\ar[d]^{p_{X- S^{x}}} & \GR_{X- S}\times\Fl_{x}\ar@{_{(}->}[l]\ar[d]^{\id\times p_{x}}\\
\Fl_{\bP_{x}}\ar@{^{(}->}[r]\ar[d] & \GR_{X- S^{x}}\ar[d] & \GR_{X- S}\times \Fl_{\bP_{x}}\ar@{_{(}->}[l]\ar[d]\\
\{x\}\ar@{^{(}->}[r] & X- S^{x} & X- S\ar@{_{(}->}[l]}
\end{equation*}
where all squares are Cartesian and $p_{x}$ and $p_{X- S^{x}}$ are proper. Since nearby cycles commute with proper base change, we have an $\Gal(F^{s}_{x}/F_{x})$-equivariant isomorphism
\begin{equation}\label{px}
Z_{\bP_{x}}(\calK)=\Psi(\calK\boxtimes\delta_{\bP_{x}})=\Psi(\calK\boxtimes p_{x,*}\delta_{\bI_{x}})\cong p_{x,*}\Psi(\calK\boxtimes\delta_{\bI_{x}})=p_{x,*}Z_{x}(\calK).
\end{equation}
Let $C_{\bP_{x}}\in D^{b}(\bI_{x}\backslash L_{x}G/\bI_{x})$ be the constant sheaf on $\bI_{x}\backslash \bP_{x}/\bI_{x}$, which, up to a shift, belongs to $\Perv(\bI_{x}\backslash L_{x}G/\bI_{x})$. Therefore we have a canonical isomorphism
\begin{equation}\label{pullconv}
p^{*}_{x}Z_{\bP_{x}}(\calK)=p^{*}_{x}p_{x,*}Z_{x}(\calK)=Z_{x}(\calK)*C_{\bP_{x}}\cong C_{\bP_{x}}*Z_{x}(\calK)
\end{equation}
In the second equality above we have used the centrality of $Z_{x}(\calK)$. Any object of the form $C_{\bP_{x}}*(-)$ is equivariant under the left action of $\bP_{x}$ on $\Fl_{x}$ because $C_{\bP_{x}}$ is, therefore $p^{*}_{x}Z_{\bP_{x}}(\calK)$ is also left $\bP_{x}$-equivariant, hence descends to an object in $\Perv(\bP_{x}\backslash L_{x}G/\bP_{x})$.

Let $\calF\in D^{b}(\Bun_{G}(\bP_{x};x\in S))$ be a Hecke eigensheaf with eigen local system $\calE:\Sat\to\Loc(X- S)$. Let $u_{0}:\Spec k\to\Bun_{G}(\bP_{x};x\in S)$ be a base point.

\begin{lemma}\label{l:near}
For each $\calK\in\Sat$, there is a canonical isomorphism
\begin{equation*}
u_{0}^{*}\calF\otimes\calE(\calK)|_{\Spec F^{s}_{x}}\cong u_{0}^{*}\TT_{x}(Z_{\bP_{x}}(\calK),\calF).
\end{equation*}
which is equivariant with respect to the $\Gal(F^{s}_{x}/F_{x})$-action (on the right side it acts on the nearby cycles $Z_{\bP_{x}}(\calK)$).
\end{lemma}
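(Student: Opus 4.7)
The plan is to deform the whole configuration over the curve $X-S^x$, so that the generic fiber recovers the left-hand side via the Hecke eigen property and the special fiber at $x$ recovers the right-hand side via Gaitsgory's construction of $Z_{\bP_x}$. Concretely, use the Hecke correspondence
\begin{equation*}
\Bun\xleftarrow{\oleft{h}}\Hk_{X-S^x}\xrightarrow{\oright{h}}\Bun\times(X-S^x),
\end{equation*}
together with the family $\GR_{X-S^x}\to X-S^x$ and its relative-position morphism $\inv:\Hk_{X-S^x}\to$ (the global loop-group quotient). The spread-out Satake sheaf $\calK_{\GR}$ over $\GR_{X-S}$ extends canonically to an object $\wt{\calK}$ on $\GR_{X-S^x}$ whose $*$-restriction to the special fiber $\Fl_{\bP_x}$ over $x$ is, by definition of Gaitsgory's nearby cycle functor together with the compatibility \eqref{px}, the central sheaf $Z_{\bP_x}(\calK)$. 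Then the complex
\begin{equation*}
\calH:=\oright{h}_!\bigl(\oleft{h}^*\calF\otimes\inv^*\wt{\calK}\bigr)\in D^b(\Bun\times(X-S^x))
\end{equation*}
interpolates the generic and special Hecke operations of interest.

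Restricted to $\Bun\times(X-S)$, the complex $\calH$ is precisely the ordinary geometric Hecke operator $\TT(\calK,\calF)$, which by the Hecke eigen hypothesis is canonically isomorphic to $\calF\boxtimes\calE(\calK)$. Consequently the nearby fiber of $\calH$ at $x$, as a $\Gal(F_x^s/F_x)$-representation, is canonically $\calF\otimes\calE(\calK)|_{\Spec F_x^s}$. On the other hand, $\oright{h}$ is ind-proper when restricted to the support of $\inv^*\wt{\calK}$, which is controlled by finitely many Schubert cells determined by $\calK$, so nearby cycles commute with $\oright{h}_!$; combined with the identification of the special fiber of $\wt{\calK}$ as $Z_{\bP_x}(\calK)$, this shows that the nearby fiber of $\calH$ at $x$ is also canonically isomorphic to $\TT_x(Z_{\bP_x}(\calK),\calF)$. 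Both isomorphisms are $\Gal(F_x^s/F_x)$-equivariant by the functoriality of nearby cycles. Pulling back along $u_0$ then gives the desired isomorphism.

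The main obstacle will be to check carefully that proper base change applies to the nearby cycles functor for $\oright{h}$ in this non-quasi-compact setting: one must control the stratification of $\Hk_{X-S^x}$ where $\inv^*\wt{\calK}$ is supported, verify ind-properness of $\oright{h}$ on each finite-type piece, and confirm that the extension $\wt{\calK}$ matches, via the proper projection $p_x:\Fl_x\to\Fl_{\bP_x}$ from \eqref{px}, with Gaitsgory's original central sheaf on $\Fl_x$ so that the formation of nearby cycles is canonical and Galois-equivariant.
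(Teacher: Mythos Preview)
Your proposal is correct and follows essentially the same approach as the paper, which simply refers to \cite[Section 4.3]{HNY} for the argument. One small point of phrasing: rather than speaking of an ``extension'' $\wt{\calK}$ on $\GR_{X-S^x}$ whose $*$-restriction at $x$ is $Z_{\bP_x}(\calK)$, it is cleaner to form $\calH$ only over the open part $X-S$ and then take its nearby cycle at $x$ directly; the commutation of nearby cycles with the ind-proper pushforward $\oright{h}_!$ (which you correctly identify as the technical point) then yields the identification with $\TT_x(Z_{\bP_x}(\calK),\calF)$.
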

\begin{proof}
The argument is the same as in \cite[Section 4.3]{HNY}.
\end{proof}

\begin{cor}\label{c:tame}
For any $\calK\in\Sat$, $\calE(\calK)\in\Loc(X-S)$ is tamely ramified and the monodromy at every $x\in S$ is unipotent.
\end{cor}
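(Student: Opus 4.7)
The plan is to transfer the question about ramification of $\calE(\calK)$ at $x \in S$ to a question about the Galois action on the nearby cycles sheaf $Z_{\bP_x}(\calK)$, and then invoke Gaitsgory's theorem.

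First, I would apply Lemma \ref{l:near}, which supplies a $\Gal(F^s_x/F_x)$-equivariant isomorphism
\begin{equation*}
u_0^*\calF \otimes \calE(\calK)|_{\Spec F^s_x} \cong u_0^*\TT_x(Z_{\bP_x}(\calK), \calF).
\end{equation*}
Since the left-hand side carries the trivial action on the first factor tensored with the local monodromy on the second, and since the base point $u_0$, the sheaf $\calF$, and the correspondence defining $\TT_x$ are all defined over $k$, the right-hand side receives its $\Gal(F^s_x/F_x)$-action entirely through the natural action on $Z_{\bP_x}(\calK)$. Choosing $u_0$ so that $u_0^*\calF \neq 0$ (which is the case for $\calF = j_!\calF_\chi$ and $u_0 \in U$), it suffices to show that the action on $Z_{\bP_x}(\calK)$ is tame and unipotent.

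Next, using the isomorphism \eqref{px}, namely $Z_{\bP_x}(\calK) \cong p_{x,*} Z_x(\calK)$ with $p_x : \Fl_x \to \Fl_{\bP_x}$ proper, and the fact that the nearby cycles Galois action commutes with proper pushforward, I would reduce to the Iwahori-level statement about $Z_x(\calK) = Z_{\bI_x}(\calK)$. At that point the key input is Gaitsgory's theorem (\cite[Theorem 2]{Ga}): the nearby cycles functor $Z_x$ lands in the category of $\bI_x$-equivariant perverse sheaves on $\Fl_x$ on which $\Gal(F^s_x/F_x)$ acts through the tame quotient, and on which a topological generator of tame inertia acts unipotently.

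Combining these steps yields tameness and unipotence on $u_0^*\TT_x(Z_{\bP_x}(\calK), \calF)$ (unipotence and tameness are preserved by the exact functor $u_0^*\TT_x(-,\calF)$ since they amount to the nilpotence of $\sigma - 1$ and the triviality of wild inertia on the sheaf-level), and therefore on the stalk of $\calE(\calK)$ at a geometric point near $x$. The main subtlety is simply to make sure that the $\Gal(F^s_x/F_x)$-equivariance of \eqref{px} and of Lemma \ref{l:near} really does identify the local monodromy of $\calE(\calK)$ with a subquotient of the Galois representation on $Z_{\bP_x}(\calK)$; once this functorial bookkeeping is verified, the corollary follows immediately from Gaitsgory's input.
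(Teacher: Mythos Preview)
Your proposal is correct and follows essentially the same approach as the paper: pick $u_0$ with $u_0^*\calF\neq 0$, use Lemma~\ref{l:near} to transfer the Galois action to $Z_{\bP_x}(\calK)$, reduce via \eqref{px} to $Z_x(\calK)$, and invoke Gaitsgory's result on tame unipotent monodromy of the central sheaves. The only cosmetic difference is that the paper cites \cite[Proposition~7]{Ga} rather than Theorem~2 for the tame-unipotent statement.
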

\begin{proof}
Pick a point $u_{0}$ such that $u_{0}^{*}\calF\neq0$. By Gaitsgory's result \cite[Proposition 7]{Ga}, $\Gal(F^{s}_{x}/F_{x})$ acts on $Z_{x}(\calK)$ tamely and unipotently, hence the same is true on $Z_{\bP_{x}}(\calK)$ by \eqref{px}, and on $\calE(\calK)|_{\Spec F_{x}}$ by Lemma \ref{l:near}.
\end{proof}

In our applications, we will consider $S=\{0,1,\infty\}$ and consider $\Bun$ instead of $\Bun_{G}(\bP_{0},\bP_{1},\bP_{\infty})$. The category $D^{b}(\Bun)_{\odd}$ is clearly preserved by the Hecke operators at $1$ or $\infty$. Corollary \ref{c:tame} shows that $\calE$ is tame at $1$ and $\infty$.

\subsection{Local monodromy}\label{ss:loc}
For a closed point of $x\in \PP^1$, let $I_{x}\subset\Gal(F^{s}_{x}/F_{x})$ be the inertia group at $x$. Let $I_x^{\tame}$ be the tame quotient of $I_x$. Since we only care about the action of the inertia groups in this section, we assume $k$ is algebraically closed.

\begin{prop}\label{p:mono1}
Under the homomorphism $\rho_{k}$, a topological generator of $I_1^{\tame}$ gets mapped to a regular unipotent element in $\hatG(\Ql)$.
\end{prop}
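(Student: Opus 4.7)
The plan is to apply Lemma \ref{l:near} at the point $x=1$, where $\bP_1 = \bI_1$ is the standard Iwahori. For each $\calK \in \Sat$ this gives a canonical $\Gal(F_1^s/F_1)$-equivariant isomorphism
\begin{equation*}
u_0^*(j_!\calF_\chi) \otimes \calE(\calK)|_{\Spec F_1^s} \;\cong\; u_0^* \TT_1\bigl(Z_1(\calK),\, j_!\calF_\chi\bigr),
\end{equation*}
where $Z_1 = Z_{\bI_1}$ is Gaitsgory's central nearby cycles functor and the Galois action on the right comes from the monodromy operator on $Z_1(\calK)$. By Corollary \ref{c:tame} this monodromy is already tame and unipotent, so the content of the proposition is exactly the regularity of the resulting unipotent element in $\hatG(\Ql)$.

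To unwind the right-hand side, I would use Theorem \ref{th:clean}: the Hecke modification $\TT_1(Z_1(\calK),-)$ alters only the Borel reduction at $1$, and hence acts on $D^b(\Bun)$ through the fibers of $\Bun \to \Bun_G(\wt{\bP_0}, \bP_\infty)$; the fiber over $\star$ is the finite flag variety $\fl_G$, and $j_!\calF_\chi$ restricts to the clean extension of $\calF_\chi$ from the open $K$-orbit $U \subset \fl_G$. Taking stalk at the base point $u_0 \in U$ therefore expresses the right-hand side as a compactly supported cohomology
\begin{equation*}
u_0^* \TT_1\bigl(Z_1(\calK), j_!\calF_\chi\bigr) \;\cong\; \cohoc{*}{U,\; Z_1(\calK)|_U \otimes \calF_\chi},
\end{equation*}
with the $I_1^{\tame}$-monodromy induced from that on $Z_1(\calK)$.

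To extract regular unipotence, I would invoke Gaitsgory's structural theorem (together with its Arkhipov--Bezrukavnikov refinement): $Z_1(\calK)$ carries a canonical monodromy filtration whose associated graded is a direct sum $\bigoplus_\mu J_\mu$ of Wakimoto sheaves indexed by the weights $\mu$ of $\calK$, with the logarithm of monodromy acting as a shift compatible with the principal grading on $\hatG$. Restricted to the open orbit $U$ and integrated, this filtration produces on $\calE(\calK)$ a nilpotent endomorphism whose Jordan structure matches that of a principal nilpotent. In practice it suffices to verify the conclusion on a single faithful representation: for $\calK = \IC_{\alpha^\vee}$, Proposition \ref{p:motivic} identifies $\calE^{\qm}_\chi$ with the $e_\chi$-isotypic summand of the middle cohomology of $\tilY_x$, and the dimension of the $I_1^{\tame}$-invariant subspace can be computed geometrically as a nearby-cycle count at $x=1$; it equals $\rank \hatG$ in the $E_7, E_8$ cases and $1$ in the $G_2$ case, matching exactly the invariant count of a regular unipotent on the quasi-minuscule representation.

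The main obstacle is this final regularity step. Tameness and unipotence drop out of general principles, but pinning down the precise conjugacy class requires a careful analysis of how Gaitsgory's monodromy filtration on $Z_1(\calK)$ interacts with the $K$-orbit stratification of $\fl_G$. The decisive geometric input is that the open $K$-orbit $U$ meets the Iwahori stratification generically --- its closure contains the open Iwahori orbit --- so no collapsing of Jordan blocks occurs upon integration, forcing the nilpotent class to be principal rather than a smaller unipotent class.
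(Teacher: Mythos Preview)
Your proposal has a genuine gap, and it stems from a mistaken geometric picture in the second paragraph. The Hecke operator $\TT_1(Z_1(\calK),-)$ does \emph{not} merely alter the Borel reduction at $1$: the kernel $Z_1(\calK)$ lives on the affine flag variety $L_1G/\bI_1$, not the finite flag variety, and the corresponding Hecke modification can change the underlying $G$-bundle along $\{1\}$. Consequently $\TT_1$ does not act ``through the fibers of $\Bun\to\Bun_G(\wt{\bP_0},\bP_\infty)$'', and the displayed identification of $u_0^*\TT_1(Z_1(\calK),j_!\calF_\chi)$ with a compactly supported cohomology over $U\subset\fl_G$ is not valid. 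Once that identification fails, the rest of the argument --- the Wakimoto-filtration heuristic and the ``no collapsing of Jordan blocks because $U$ meets the Iwahori stratification generically'' claim --- has no firm foundation. You yourself flag this regularity step as the main obstacle, and the resolution you offer is too vague to be a proof.

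The paper's approach is quite different and avoids any direct computation with Wakimoto sheaves or Jordan blocks. Following the strategy of \cite[Section 4.3]{HNY}, it suffices to show that $\TT_1(\IC_{\tilw},j_!\calF_\chi)=0$ for every $\tilw\in\tilW$ with $\tilw\neq1$; this vanishing forces the entire affine Hecke action at $1$ to factor through the skyscraper $\delta=\IC_e$, which is exactly the condition that pins down the monodromy as regular unipotent. The vanishing itself is proved by a parahoric descent: writing $\tilw=\tilw's_i$ with $\ell(\tilw)=\ell(\tilw')+1$, one has $\IC_{\tilw}=p_i^*\IC_{i,\tilw}$ for the projection $p_i:\Fl\to\Fl_i$ to the partial affine flag variety of type $\bP_{1,i}$, so $\TT_1(\IC_{\tilw},-)$ factors through $D^b(\Bun_G(\wt{\bP_0},\bP_{1,i},\bP_\infty))_{\odd}$. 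The paper then shows this category is zero by a short rigidity argument: any nonzero odd object would, via Theorem~\ref{th:clean}, have a stalk supported over $U$, but the $\PP^1$-fiber of the projection $\Bun\to\Bun_G(\wt{\bP_0},\bP_{1,i},\bP_\infty)$ would then have to map into the affine variety $U$ and simultaneously be a free orbit under an algebraic group --- both impossible for $\PP^1$. This is the decisive input you are missing.
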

\begin{proof}
We would like to use the argument of \cite[Section 4.3]{HNY}. The only thing we need to show is that, for each irreducible object $\IC_{\tilw}\in D^{b}(\bI_{1}\backslash L_{1}G/\bI_{1})$ (indexed by an element $\tilw$ in the affine Weyl group $\tilW$), where $\tilw\neq 1$, we have $\TT_{1}(\IC_{\tilw},\calF)=0$ for any object $\calF\in D^{b}(\Bun)_{\odd}$. Since $\tilw\neq 1$, there exists a simple reflection $s_{i}$ such that $\tilw=\tilw's_{i}$ and $\ell(\tilw)=\ell(\tilw')+1$. Let $\bP_{1,i}$ be the parahoric subgroup of $L_{1}G$ generated by $\bI_{1}$ and the root subgroup of $-\alpha_{i}$ (the simple root corresponding to $s_{i}$). Let $p_{i}:\Fl=L_{1}G/\bI_{1}\to\Fl_{i}=L_{1}G/\bP_{1,i}$ be the projection. Then $\IC_{\tilw}\cong p_{i}^{*}\IC_{i,\tilw}$ for some object $\IC_{i,\tilw}\in D^{b}(\bI_{1}\backslash L_{1}G/\bP_{1,i})$. With the $\bP_{1,i}$-level structure, we may define another Hecke correspondence at $t=1$:
\begin{equation*}
\xymatrix{& \Hk_{1,i}\ar[rr]^{\inv}\ar[dl]_{\oleft{h}}\ar[dr]^{\oright{h}} && \bI_{1}\backslash L_{1}G/\bP_{1,i}\\
\Bun & & \Bun_{G}(\wt{\bP_{0}},\bP_{1,i},\bP_{\infty})}
\end{equation*} 
and define a Hecke operator:
\begin{eqnarray*}
\TT_{1}(\IC_{i,[\tilw]},-):D^{b}(\Bun)_{\odd}&\to& D^{b}(\Bun(\wt{\bP_{0}},\bP_{1,i},\bP_{\infty}))_{\odd}\\
\calF&\mapsto&\oright{h}_{!}(\oleft{h}^{*}\calF\otimes\inv^{*}\IC_{i,\tilw}).
\end{eqnarray*}
Let $p_{\Bun,i}:\Bun=\Bun(\wt{\bP_{0}},\bI_{1},\bP_{\infty})\to\Bun_{G}(\wt{\bP_{0}},\bP_{1,i},\bP_{\infty})$ be the projection. Then we have
\begin{equation*}
\TT_{1}(\IC_{\tilw},\calF)\cong p_{\Bun,i}^{*}\TT_{1}(\IC_{i,[\tilw]},\calF).
\end{equation*}
We now claim that the category $D^{b}(\Bun(\wt{\bP_{0}},\bP_{1,i},\bP_{\infty}))_{\odd}$ is zero, which then implies $\TT_{1}(\IC_{\tilw},\calF)=0$ and completes the proof. 

Let $\calH\in D^{b}(\Bun_{G}(\wt{\bP_{0}},\bP_{1,i},\bP_{\infty}))_{\odd}$ be a nonzero object. We view $\calH$ as a $\tilK$-equivariant complex on $\Bun_{G}(\bP_{0}^{+},\bP_{1,i},\bP_{\infty})$. Let $v:\Spec k\to\Bun_{G}(\bP_{0}^{+},\bP_{1,i},\bP_{\infty})$ be a point where the stalk of $\calH$ is nonzero. Let $q_{i}:\Bun^{+}\to\Bun_{G}(\bP_{0}^{+},\bP_{1,i},\bP_{\infty})$ be the projection, whose fibers are isomorphic to $\bP_{1,i}/\bI_{1}\cong\PP^{1}$. Then $q_{i}^{-1}(v)\in U$ because $\calH$ has to vanish outside $U\subset\Bun^{+}$ be Theorem \ref{th:clean}. We have the following Cartesian diagram
\begin{equation*}
\xymatrix{\PP^{1}\ar[d]\ar[r] & [\PP^{1}/\Aut(v)]\ar@{^{(}->}[r]^(.6){i'_{v}}\ar[d] & U\ar@{^{(}->}[r] & \Bun^{+}\ar[d]^{q_{i}}\\
\{v\}\ar[r] & [\{v\}/\Aut(v)]\ar@{^{(}->}[rr]^{i_{v}} & & \Bun_{G}(\bP_{0}^{+},\bP_{1,i},\bP_{\infty})}
\end{equation*}
Since $i_{v}$ is representable, so is its base change $i'_{v}$, which implies that the action of $\Aut(v)$ on $\PP^{1}$ is free. On the other hand, the morphism $\PP^{1}\to[\PP^{1}/\Aut(v)]\hookrightarrow U$ has to be constant because $\PP^{1}$ is proper while $U$ is affine. Therefore $\Aut(v)$ acts on $\PP^{1}$ both freely and transitively. This implies $\PP^{1}$ is a torsor under $\Aut(v)$. However $\PP^{1}$ is not isomorphic to any algebraic group. Contradiction! Hence $\calH$ has to be zero everywhere.
\end{proof}

\subsubsection{An involution in $\hatG$}\label{sss:kappa} By the construction of the canonical double cover in Section \ref{sss:doublecover}, we have $\xcoch(T)/\xcoch(\tilT)\cong\ZZ/2\ZZ$, where $\tilT$ is the preimage of $T$ in $\tilK$. This defines an order two character
\begin{equation*}
\xcoch(T)\twoheadrightarrow\xcoch(T)/\xcoch(\tilT)\cong\ZZ/2\ZZ\isom\{\pm1\}
\end{equation*}
and hence an element $\kappa\in\hatT[2]$. One can check case by case that when $G$ is of type $A_{1},D_{2n},E_{7},E_{8}$ or $G_{2}$, $\kappa$ is always a split Cartan involution in $\hatG$ (see Section \ref{sss:Cartaninv}).

\begin{prop}\label{p:mono0}
The local system $\calE_{k}$ is tame at $0$. Under the homomorphism $\rho_{k}$, a topological generator of $I_0^{\tame}$ gets mapped to an element with Jordan decomposition $g_{s}g_{u}\in\hatG(\Ql)$, where the semisimple part $g_{s}$ is conjugate to $\kappa\in\hatT[2]$.
\end{prop}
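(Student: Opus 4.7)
Tameness at $0$ follows from the parahoric analog of Corollary \ref{c:tame}, applied to the $\wt{\bP_0}$-level. Concretely, interpreting odd sheaves on $\Bun$ as genuine $\tilK_0$-equivariant sheaves on $\Bun^+$, Gaitsgory's parahoric nearby-cycles functor (defined on the $\bP_0^+$-level) inherits tame monodromy from $Z_0$ via the relation \eqref{px}, and the same conclusion persists on the $\mu^{\ker}_2$-odd part. The $x=0$ analog of Lemma \ref{l:near} in this genuine setting yields an $I_0$-equivariant isomorphism
\begin{equation*}
u_0^*(j_!\calF_\chi)\otimes\calE(\calK)|_{\Spec F^s_0}\;\cong\; u_0^*\,\TT_0\bigl(Z_{\wt{\bP_0}}(\calK),\,j_!\calF_\chi\bigr),
\end{equation*}
where $Z_{\wt{\bP_0}}(\calK)$ denotes the odd-genuine lift of the parahoric nearby cycles. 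Identifying the local monodromy at $0$ thus reduces to identifying the tame inertia action on $Z_{\wt{\bP_0}}(\calK)$.

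To compute this action, I would pull back to the Iwahori level: the parahoric-to-Iwahori identity \eqref{pullconv} lifts to $p_0^*Z_{\wt{\bP_0}}(\calK)\cong Z_0(\calK)*C_{\wt{\bP_0}}$, where $C_{\wt{\bP_0}}$ is the genuine lift of the constant sheaf on $\bI_0\backslash\bP_0/\bI_0$. Gaitsgory's centrality theorem forces the tame inertia to act unipotently on $Z_0(\calK)$, so any nontrivial semisimple part of the action on $Z_{\wt{\bP_0}}(\calK)$ must originate entirely from the genuine $\tilK_0$-equivariant structure. The Arkhipov--Bezrukavnikov Wakimoto filtration on $Z_0(\IC_\lambda)$ expresses its associated graded as a sum over weights $\mu\in\xcoch(T)$ of the representation $V_\lambda$, with the $\mu$-summand supported on the Schubert cell $t^\mu\bI_0/\bI_0\subset\Fl_0$.

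Pushing forward to $\Fl_{\bP_0^+}$ and extracting the $\mu^{\ker}_2$-odd part, I expect the $\mu$-weight component to survive with $\mu^{\ker}_2$ acting by $(-1)^{[\mu]}$, where $[\mu]$ is the image of $\mu$ under the canonical surjection $\xcoch(T)\twoheadrightarrow\xcoch(T)/\xcoch(\tilT)\cong\ZZ/2$. The oddness constraint then forces the semisimple part of the tame monodromy to act on $(V_\lambda)_\mu$ by exactly this sign, which by the very definition of $\kappa$ in Section \ref{sss:kappa} is the action of $\kappa\in\hatT[2]$ on $(V_\lambda)_\mu$. Since this identification is functorial in $\calK\in\Sat$, Tannakian reconstruction forces $g_s\sim\kappa$ in $\hatG(\Ql)$.

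The principal obstacle is the central computation in the third step: one must rigorously verify that the $\mu^{\ker}_2$-equivariant structure on the $t^\mu$-stratum of $\Fl_{\bP_0^+}$ is governed precisely by the character $\xcoch(T)/\xcoch(\tilT)\to\{\pm1\}$ defining $\kappa$. This is a delicate computation that blends the Mirkovi\'c--Vilonen description of the weight components of $Z_0(\IC_\lambda)$ with the explicit structure of the double cover $\tilK_0\to K_0$; everything else is formal manipulation of nearby cycles, Hecke operators, and the eigen property.
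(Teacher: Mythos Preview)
Your overall architecture---nearby cycles at $0$, the Wakimoto filtration, then Tannakian reconstruction of the semisimple part as an element of $\hatT$---is exactly the paper's strategy. But there is a genuine gap in how you produce the semisimple monodromy.

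You write $p_0^*Z_{\wt{\bP_0}}(\calK)\cong Z_0(\calK)*C_{\wt{\bP_0}}$ with the \emph{ordinary} Gaitsgory functor $Z_0$, and then say the semisimple part ``must originate entirely from the genuine $\tilK_0$-equivariant structure.'' This cannot work as stated: the monodromy on $Z_0(\calK)$ is unipotent, and convolving with any fixed object $C_{\wt{\bP_0}}$ (which is constant in the degenerating family) leaves the monodromy operator unchanged. Extracting a $\mu_2^{\ker}$-isotypic component is likewise a passive operation on the sheaf---it does not alter the $I_0^{\tame}$-action. So in your setup the monodromy on $Z_{\wt{\bP_0}}(\calK)$ would still be unipotent, and you never see $\kappa$. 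You are conflating two different structures on the nearby cycles: the $\mu_2^{\ker}$-equivariance and the inertia action.

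The paper avoids this by passing not to the $\wt{\bP_0}$-level but to the $\wt{\bI_0}$-level (the double cover of the Iwahori, with reductive quotient $\tilT$), and using the \emph{monodromic} variant of Gaitsgory's construction from \cite{BFO}: one takes nearby cycles of $\calK_{\GR}\boxtimes\delta_{\odd}$, where $\delta_{\odd}$ is the nontrivial rank-one local system on $\bI_0/\wt{\bI_0}$, landing in $\Perv(\wt{\bI_0}\backslash L_0G/\wt{\bI_0})_{\odd}$. The point is that in this monodromic setting the tame monodromy on the Wakimoto graded $\Gr^F_\lambda$ is \emph{not} trivial but is precisely multiplication by $\kappa(\lambda)$; this is the content borrowed from \cite[\S2.4--2.5]{BFO}, and it is where $\kappa$ genuinely enters the monodromy action. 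Once that is in hand, the Jordan decomposition and the comparison of fiber functors proceed exactly as you outline. So the missing ingredient is to replace $Z_0$ by its monodromic twin $Z'_0$ and invoke the BFO computation rather than trying to manufacture $\kappa$ from equivariance after the fact.
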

\begin{proof}
Pulling back the double covering $\wt{\bP_{0}}\to\bP_{0}$ to $\bI_{0}\subset\bP_{0}$, we get a double covering $\wt{\bI_{0}}\to\bI_{0}$. The reductive quotient of $\wt{\bI_{0}}$ is $\tilT$. We will consider the moduli stack $\Bun_{G}(\wt{\bI_{0}},\bI_{1},\bP_{\infty})$, defined similarly as $\Bun$. As we discussed in Section \ref{sss:Keq}, we can define the category such as $D^{b}(\Bun_{G}(\wt{\bI_{0}},\bI_{1},\bP_{\infty}))_{\odd}$ and $D^{b}(L_{0}G/\wt{\bI_{0}})_{\odd}$ etc. The inclusion $\wt{\bI_{0}}\hookrightarrow\wt{\bP_{0}}$ gives a projection
\begin{equation*}
p:\Bun_{G}(\wt{\bI_{0}},\bI_{1},\bP_{\infty})\to\Bun_{G}(\wt{\bP_{0}},\bI_{1},\bP_{\infty})=\Bun.
\end{equation*}
Let $j_!\calF$ be the Hecke eigensheaf with eigen local system $\calE$. The complex $p^{*}j_{!}\calF$ is clearly also a Hecke eigensheaf on $\Bun_{G}(\wt{\bI_{0}},\bI_{1},\bP_{\infty})$ for the Hecke operators on $\X{}$ with the same eigen local system $\calE$. Therefore it suffices to prove a stronger statement: for any nonzero Hecke eigensheaf $\calF$ on $\Bun_{G}(\wt{\bI_{0}},\bI_{1},\bP_{\infty})$ with eigen local system $\calE$, the $\hatG$-local system $\calE$ is tamely ramified and the semisimple part of the local monodromy is conjugate to $\kappa$.

We need another variant of Gaitsgory's nearby cycles construction allowing sheaves which are monodromic with respect to the torus action. This variant is sketched in \cite[Section 2.1 and 2.2]{BFO}. Let us be more specific about the version we need. There is a family $\GR'_{\PP^{1}-\{1,\infty\}}$ interpolating $\Gr_{x}\times L_{0}G/\wt{\bI_{0}}$ and $L_{0}G/\wt{\bI_{0}}$
\begin{equation*}
\xymatrix{L_{0}G/\wt{\bI_{0}}\ar@{^{(}->}[r]\ar[d] & \GR'_{\PP^{1}-\{1,\infty\}}\ar[d] & \GR_{\X{}}\times L_{0}G/\wt{\bI_{0}}\ar@{_{(}->}[l]\ar[d]\\
\{0\}\ar@{^{(}->}[r] & \PP^{1}-\{1,\infty\} & \X{}\ar@{_{(}->}[l]}
\end{equation*} 
Let $\delta_{\odd}\in\Perv(L_{0}G/\wt{\bI_{0}})$ be the rank one local system supported on $\bI_{0}/\wt{\bI_{0}}$ on which $\mu_{2}^{\ker}$ acts by the sign representation. Using the nearby cycles of the above family, we define
\begin{eqnarray*}
Z'_{0}:\Sat&\to&\Perv(\wt{\bI_{0}}\backslash L_{0}G/\wt{\bI_{0}})_{\odd}\\
\calK&\mapsto&\Psi(\calK_{\GR}\boxtimes\delta_{\odd}).
\end{eqnarray*}
Here the subscript ``odd'' in $\Perv(\wt{\bI_{0}}\backslash L_{0}G/\wt{\bI_{0}})_{\odd}$ means taking those objects on which both actions of $\mu_{2}^{\ker}$ (from left and right) are through the sign representation. The derived category $D^{b}(\wt{\bI_{0}}\backslash L_{0}G/\wt{\bI_{0}})_{\odd}$ still acts on $D^{b}(\Bun_{G}(\wt{\bI_{0}},\bI_{1},\bP_{\infty}))_{\odd}$, using the same construction as in Section \ref{ss:genloc}. We denote this action by
\begin{equation*}
\TT'_{0}:D^{b}(\wt{\bI_{0}}\backslash L_{0}G/\wt{\bI_{0}})_{\odd}\times D^{b}(\Bun_{G}(\wt{\bI_{0}},\bI_{1},\bP_{\infty}))_{\odd}\to D^{b}(\Bun_{G}(\wt{\bI_{0}},\bI_{1},\bP_{\infty}))_{\odd}.
\end{equation*}
We also have a variant of Lemma \ref{l:near}: there is an $I_{0}$-equivariant isomorphism
\begin{equation}\label{near0}
u_{0}^{*}\calF\otimes\calE(\calK)|_{\Spec F^{s}_{0}}\cong u_{0}^{*}\TT'_{0}(Z'_{0}(\calK),\calF).
\end{equation}
for any $\calK\in\Sat$. By \cite[Section 5.2, Claim 2]{AB}, the monodromy action on $Z'_{0}(\calK)$ factors through the tame quotient (the Claim in {\em loc.cit.} requires the family to live over $\AA^{1}$ and carry a $\Gm$-action compatible with the rotation action on $\AA^{1}$. In our situation, we can extend $\GR'_{\PP^{1}-\{1,\infty\}}$ to $\GR'_{\AA^{1}}$ by ignoring the level structure at $1$, and the rotation action on $\PP^{1}$ induces the desired action on $\GR'_{\AA^{1}}$). Therefore the local system $\calE(\calK)$ is tame at $0$ for any $\calK$.

The following argument is borrowed from \cite[Section 2.4 and 2.5]{BFO}, to which we refer more details. There is a filtration $F_{\lambda}$ (indexed by $\lambda\in\xcoch(T)$, partially ordered using the positive coroot lattice) of $Z'_{0}(\calK)$ with $\Gr^{F}_{\lambda}Z'_{0}(\calK)$ isomorphic to a direct sum of the Wakimoto sheaf $J_{\lambda}$. Moreover, the monodromy operator $m(\calK)$ on $Z'_{0}(\calK)$ preserves this filtration, and acts on $\Gr^{F}_{\lambda}Z'_{0}(\calK)$ by $\kappa(\lambda)$ (recall any element in $\hatT(\Ql)$ is a homomorphism $\xcoch(T)\to\Ql^{\times}$). Therefore, we can write $m(\calK)$ into Jordan normal form $m(\calK)_{s}m(\calK)_{u}=m(\calK)_{u}m(\calK)_{s}$. Here both $m(\calK)_{u}$ and $m(\calK)_{s}$ preserve the filtration $F_{\lambda}$, $m(\calK)_{s}$ is an involution which agrees with the $m(\calK)$-action on $\Gr^{F}_{\lambda}Z_{0}'(\calK)$ and $m(\calK)_{u}$ is unipotent which acts as identity on $\Gr^{F}_{\lambda}Z_{0}'(\calK)$. On the other hand, let $\zeta_{0}\in I^{\tame}_{0}$ be a topological generator, and let  $\rho_{k}(\zeta_{0})=g_{s}g_{u}=g_{u}g_{s}$ be the Jordan decomposition of $\rho_{k}(\zeta_{0})$ in $\hatG(\Ql)$. Since the isomorphism \eqref{near0} intertwines the action of $\rho_{k}(\zeta_{0})$ on $\calE(\calK)|_{\Spec F^{s}_{0}}$ and the action of $m(\calK)$ on $Z'_{0}(\calK)$, it must also intertwine the $g_{s}$-action and the $m(\calK)_{s}$-action by the uniqueness of Jordan decomposition. 

Let $\calA$ be the full subcategory of $\Perv(\wt{\bI_{0}}\backslash L_{0}G/\wt{\bI_{0}})_{\odd}$ consisting of those objects admitting filtrations with graded pieces isomorphic to Wakimoto sheaves. Let $\Gr\calA$ be the full subcategory of $\calA$ consisting of direct sums of Wakimoto sheaves. Then $\Gr\calA\cong\Rep(\hatT)$ as tensor categories, and the functor
\begin{equation*}
\Rep(\hatG,\Ql)\cong\Sat\xrightarrow{Z'_{0}}\calA\xrightarrow{\oplus\Gr^{F}_{\lambda}}\Gr\calA\cong\Rep(\hatT,\Ql)
\end{equation*}
is isomorphic to the restriction functor $\Res^{\hatG}_{\hatT}:\Rep(\hatG)\to\Rep(\hatT)$ induced by the inclusion $\hatT\hookrightarrow\hatG$. The semisimple part of the monodromy operator $\{m(\calK)_{s}\}_{\calK\in\Sat}$ acts as an automorphism of $\Res^{\hatG}_{\hatT}$, hence determines an element $\tau\in\hatT(\Ql)=\Aut^{\otimes}(\Res^{\hatG}_{\hatT})$. The above discussion has identified the action of $\tau$ on Wakimoto sheaves (which correspond to irreducible algebraic representations of $\hatT$ under the equivalence $\Gr\calA\cong\Rep(\hatT)$), hence $\tau=\kappa$. To summarize, $\{m(\calK)_{s}\}_{\calK\in\Sat}$ gives a tensor automorphism of the fiber functor $\omega:\Sat\cong\Rep(\hatG)\xrightarrow{\Res}\Rep(\hatT)\to\Vec_{\Ql}$ which is the same as $\kappa$.

On the other hand, $g_{s}$ gives an automorphism of another fiber functor $\omega':\Sat\to\Vec_{\Ql}$ given by taking the fiber at $\Spec F^{s}_{0}$. Since $g_{s}$ corresponds to $m(\calK)_{s}$ under \eqref{near0}, upon identifying the fiber functors $\omega$ and $\omega'$, $g_{s}$ and $\kappa$ should be the same automorphism, i.e., they are conjugate elements in $\hatG(\Ql)$. This proves the Proposition.
\end{proof}

\begin{prop}\label{p:monoinf}
Suppose $G$ is not of type $A_1$. Under the homomorphism $\rho_{k}$, a topological generator of $I_\infty^{\tame}$ gets mapped to a unipotent element in $\hatG(\Ql)$ which is neither regular nor trivial.
\end{prop}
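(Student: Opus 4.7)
The plan is to verify the three claims---unipotence, non-triviality, and non-regularity---separately. Unipotence is immediate from Corollary \ref{c:tame} applied to the parahoric $\bP_\infty$: by Gaitsgory's theorem $I_\infty$ acts tamely and unipotently on $Z_\infty(\calK)$, and this transfers via \eqref{px} and Lemma \ref{l:near} to an action on $\calE(\calK)|_{\Spec F_\infty}$; hence $\rho_k(\zeta_\infty)$ is unipotent.

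For non-triviality, suppose $\rho_k(\zeta_\infty)=1$. Then $\calE$ extends to a $\hatG$-local system on $\PP^1-\{0,1\}\cong\Gm$, whose tame \'etale fundamental group is procyclic; in particular the image of $\rho_k$ is abelian. But by Propositions \ref{p:mono1} and \ref{p:mono0} this image contains a regular unipotent element $u_1$ and an element $u_0$ whose semisimple part is conjugate to $\kappa\neq 1$. For $\hatG$ simple and adjoint (types $E_7,E_8,G_2$), the centralizer of a regular unipotent element is a connected unipotent subgroup of dimension $\rank\hatG$, containing no nontrivial semisimple element. Since the image is abelian, $u_0$ must commute with $u_1$, so $u_0$ lies in this centralizer, contradicting its nontrivial semisimple part.

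Non-regularity is the main obstacle. The plan is to adapt the Bezrukavnikov-Finkelberg-Ostrik / Arkhipov-Bezrukavnikov analysis used in Proposition \ref{p:mono0}, replacing the twisted Iwahori $\wt{\bI_0}$ by the untwisted parahoric $\bP_\infty$. By Lemma \ref{l:near} the $I_\infty$-action on $\calE(\calK)|_{\Spec F^s_\infty}$ is intertwined with the monodromy $m_{\bP_\infty}(\calK)$ on $Z_{\bP_\infty}(\calK)=p_{\infty,*}Z_\infty(\calK)$. Pulling back via $p_\infty^*$ and using \eqref{pullconv}, the analysis reduces to understanding the monodromy on the convolution $Z_\infty(\calK)*C_{\bP_\infty}$. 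The Wakimoto filtration on $Z_\infty(\calK)$ is compatible with convolution by $C_{\bP_\infty}$; pushing forward to $Z_{\bP_\infty}(\calK)$ averages the $\hatT$-weight graded pieces over $W_K$-orbits of coweights. A Tannakian argument parallel to Section \ref{sss:kappa}---but without the character twist---should then identify the resulting unipotent part of $\{m_{\bP_\infty}(\calK)\}_{\calK\in\Sat}$, viewed as an element of $\hatG$, with (a conjugate of) the principal unipotent of the Langlands-dual Levi $\widehat{K}\hookrightarrow\hatG$ attached to $\bP_\infty$. Since $\widehat{K}$ is a proper subgroup of $\hatG$, this principal unipotent is not regular in $\hatG$; and since $K$ has positive semisimple rank whenever $G$ is not of type $A_1$ (by the table in Section \ref{sss:sym}), it is also nontrivial.

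The subtle step in the last paragraph is the precise identification of the resulting unipotent as the principal unipotent of $\widehat K$, rather than merely as some element in the nilpotent radical of a proper parabolic: it requires a careful extension of the BFO/AB Wakimoto-filtration techniques from Iwahori to parahoric level in the untwisted setting, paralleling but simplifying the treatment given in the proof of Proposition \ref{p:mono0}.
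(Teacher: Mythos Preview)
Your treatment of unipotence is correct and matches the paper. Your non-triviality argument is also valid and essentially equivalent to the paper's: the paper observes directly that on $\PP^1-\{0,1\}$ the single tame generator would have to be simultaneously unipotent (from the puncture $1$) and have nontrivial semisimple part $\kappa$ (from the puncture $0$); your rephrasing via the centralizer of a regular unipotent is fine. (One small omission: your parenthetical lists only $E_7,E_8,G_2$, but the proposition also covers $D_{2n}$; the centralizer argument still applies there since $\hatG$ is adjoint.)

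The non-regularity argument, however, has a genuine gap. You propose to identify the monodromy on $Z_\infty(\calK)*C_{\bP_\infty}$, after passing through a Wakimoto-type filtration averaged over $W_K$, with the principal unipotent of a dual Levi $\widehat{K}\subset\hatG$. This identification is not available: it is essentially what the paper leaves as Conjecture~\ref{c:0} (the precise determination of the unipotent class at $\infty$). The difficulty is that the Wakimoto filtration on $Z_\infty(\calK)$ does not survive convolution with $C_{\bP_\infty}$ in any way that lets you read off the unipotent class directly; the filtration collapses in a way controlled by cell combinatorics, not by a naive $W_K$-averaging.

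The paper circumvents this by arguing indirectly. Assume $N$ is regular; then the $(2h-2)$-th power of the logarithmic monodromy $M'$ on $Z_\infty(\IC_\gamma)*C_{\bP_\infty}$ is nonzero. Two independent constraints are then brought to bear. First, the G\"ortz--Haines weight estimate on the Jordan--H\"older constituents of $Z_\infty(\IC_\gamma)$ forces $M'^{2h-2}$ to factor through the single irreducible object $C_{\bP_\infty}$ (up to twist). Second, Bezrukavnikov's theorem on cells identifies, in the Serre quotient $\calP_{\unc}$ attached to the two-sided cell $\unc$ of $w_K$, the monodromy with the action of a fixed unipotent $u_{w_K}$ lying in the unipotent class assigned to $\unc$ by Lusztig's bijection; since $\ell(w_K)>0$ when $G\neq A_1$, this class is not regular, so $M'^{2h-2}=0$ in $\calP_{\unc}$. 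These two facts are incompatible: the image of $M'^{2h-2}$ would lie in $\calP_{<\unc}$ yet factor through $C_{\bP_\infty}\notin\calP_{<\unc}$. So regularity is excluded without ever pinning down the class of $N$. Your proposal would need to replace this cell-theoretic machinery with a direct computation that the paper explicitly does not know how to carry out.
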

\begin{proof}
Let $\zeta_{\infty}\in I_{\infty}^{\tame}$ be a topological generator. Let $N=\log(\rho_{k}(\zeta_{\infty}))$, which is a nilpotent element in $\hatg$. 

We first argue that $N$ is not regular. Suppose it is, then it acts on $\hatg$ with a Jordan block of size $2h-1$, where $h$ is the Coxeter number of $\hatG$. In other words, $\Ad(N)^{2h-2}\neq0$ on $\hatg$. Let $\gamma$ be the coroot of $G$ corresponding to the highest root of $\hatG$. Then $\IC_{\gamma}\in\Sat$ corresponds to the adjoint representation of $\hatG$ under the Satake equivalence \eqref{Satake}. In Section \ref{ss:genloc} we recalled the parahoric variant of Gaitsgory' nearby cycles functor $Z_{\bP_{\infty}}$. By Lemma \ref{l:near}, we have an $I_{\infty}$-equivariant isomorphism
\begin{equation}\label{Adnear}
V_{\chi}\otimes\calE(\IC_{\gamma})|_{\Spec F^{s}_{\infty}}\cong u_{0}^{*}\TT_{\infty}(Z_{\bP_{\infty}}(\IC_{\gamma}),j_{!}\calF_{\chi}).
\end{equation}
where $I_{\infty}$ acts trivially on $V_{\chi}$. In particular, the action of $N$ on the left side is reflected from the logarithm of the monodromy action on $Z_{\bP_{\infty}}(\IC_{\gamma})$. By \eqref{pullconv}, we have an isomorphism which respects the monodromy operators
\begin{equation*}
p^{*}_{\infty}Z_{\bP_{\infty}}(\IC_{\gamma})\cong Z_{\infty}(\IC_{\gamma})*C_{\bP_{\infty}}.
\end{equation*}
Let $M$ be the logarithm of the monodromy operator on $Z_{\infty}(\IC_{\gamma})$, and $M'$ be the induced endomorphism of $Z_{\infty}(\IC_{\gamma})*C_{\bP_{\infty}}$, which intertwines with the logarithm monodromy on $Z_{\bP_{x}}(\IC_{\gamma})$, hence with $N$ via \eqref{Adnear}.
Since we assumes $\Ad(N)^{2h-2}\neq0$, we must have $(M')^{2h-2}\neq0$ on $Z_{\infty}(\IC_{\gamma})*C_{\bP_{\infty}}$, and $M^{2h-2}$ is also nonzero on $Z_{\infty}(\IC_{\gamma})$. 

The following argument uses the theory of weights. For $\textup{char}(k)>0$, every object in concern comes from a finite base field $\FF_q$, hence we may assume $k=\overline{\FF}_q$ and use the weight theory of Weil sheaves. For $\textup{char}(k)=0$, every object in concern comes from a number field $L$, and we may still talk about weights by choosing a place $v$ of $L$ at which all objects have good reduction.

The object $Z_{\infty}(\IC_{\gamma})$ has a Jordan-Holder series whose associated graded are Tate twists of irreducible objects $\IC_{\tilw}$ for $\tilw\in\tilW$. In \cite[Theorem 1.1]{GH}, G\"ortz and Haines give an estimate of the weights of the twists of $\IC_{\tilw}$ appearing in $Z_{\infty}(\IC_{\gamma})$: $\IC_{\tilw}$ appears with weight in the range $[\ell(\tilw)-\ell(\gamma), \ell(\gamma)-\ell(\tilw)]$ (note the different normalization we take here and in \cite{GH}: we normalize $\IC_{\tilw}$ and $\IC_{\gamma}$ to have weight zero while in \cite{GH} they have weight $\ell(\tilw)$ and $\ell(\gamma)$ respectively). Here $\ell(\gamma)$ is the length of the translation element $\gamma$ in $\tilW$, and in fact $\ell(\gamma)=\jiao{2\rho,\gamma}=2h-2$. Since the logarithmic monodromy operator decreases weight by 2, the subquotients isomorphic to (a twist of) $\IC_{\tilw}$ are killed after applying $M$ for $\ell(\gamma)-\ell(\tilw)=2h-2-\ell(\tilw)$ times (if this is negative, this means $\IC_{\tilw}$ does not appear in $Z_{\infty}(\IC_{\gamma})$ at all). Therefore, only the skyscraper sheaf $\delta=\IC_{e}$ survives after applying $M$ for $2h-2$ times, i.e., $M^{2h-2}$ factors as
\begin{equation*}
M^{2h-2}:Z_{\infty}(\IC_{\gamma})\twoheadrightarrow\delta(1-h)\hookrightarrow Z_{\infty}(\IC_{\gamma})(2-2h)
\end{equation*}
Here the first arrow is the passage to $\Gr^{W}_{2h-2}Z_{\infty}(\IC_{\gamma})$, the maximal weight quotient, and the second arrow is induced from $\delta(h-1)\cong\Gr^{W}_{2-2h}Z_{\infty}(\IC_{\gamma})\hookrightarrow Z_{\infty}(\IC_{\gamma})$, the inclusion of the lowest weight piece. Consequently, $M'^{2h-2}$ factors as
\begin{equation}\label{factorC}
M'^{2h-2}:Z_{\infty}(\IC_{\gamma})*C_{\bP_{\infty}}\to C_{\bP_{\infty}}(1-h)\to Z_{\infty}(\IC_{\gamma})*C_{\bP_{\infty}}(2-2h).
\end{equation}
Making a shift of $C_{\bP_{\infty}}$, we may normalize it to be a perverse sheaf, so that the above sequence is in the category $\calP=\Perv(\bI_{\infty}\backslash L_{1}G/\bI_{\infty})$. 

To proceed, we need Lusztig's theory of two-sided cells and Bezrukavnikov's geometric result on cells. Let $\unc$ be the cell containing the longest element of $w_{K}\in W_K$ (note that $\IC_{w_{K}}$ is a twist of $C_{\bP_{\infty}}$). Let $\calP_{\leq\unc}$ (resp. $\calP_{<\unc}$) be the full subcategory of $\calP$ generated (under extensions) by $\{\IC_{\tilw}\}$ where $\tilw$ belongs to some two-sided cell $\leq\unc$ (resp. $<\unc$). Let $\calP_{\unc}=\calP_{\leq\unc}/\calP_{<\unc}$ be the Serre quotient. Since $C_{\bP_{\infty}}\in\calP_{\leq\unc}$, we have $Z_{\infty}(\calK)*C_{\bP_{\infty}}\subset\calP_{\leq\unc}$. The element $w_{K}\in\unc$ is a {\em distinguished (or Duflo) involution}(see \cite[Section 1.3]{Cell2} for definition, which uses the $a$-function on two sided-cells \cite[Section 2.1]{Cell1}. In our case, we may apply \cite[Proposition 2.4]{Cell1} to show $a(w_K)=\ell(w_K)$, which forces $w_K$ to be a Duflo involution by definition). In \cite[Section 4.3]{B}, Bezrukavnikov introduces a full subcategory $\calA_{w_{K}}\subset\calP_{\unc}$ generated by the image of all subquotients of $Z_{\infty}(\calK)*\IC_{w_{K}}$ in the category $\calP_{\unc}$, as $\calK$ runs over $\Sat$. It is proved there that $\calA_{w_{K}}$ has a natural structure of a monoidal abelian category with unit object the image of $C_{\bP_{\infty}}$, such that the functor
\begin{eqnarray*}
\Res_{w_{K}}:\Sat&\to&\calA_{w_{K}}\\
\calK&\mapsto& [Z_{\infty}(\calK)*C_{\bP_{\infty}}]
\end{eqnarray*} 
is monoidal. Here we use $[-]$ to denote the passage from $\calP_{\leq\unc}$ to $\calP_{\unc}$. By \cite[Theorem 1]{B}, there is a subgroup $\hatH\subset\hatG$ and a unipotent element $u_{w_{K}}\in\hatG(\Qlbar)$ commuting with $\hatH$, and an equivalence of tensor categories $\Phi_{w_{K}}:\calA_{w_{K}}\cong\Rep(\hatH)$ such that the following diagram is commutative (by a natural isomorphism)
\begin{equation*}
\xymatrix{\Sat\ar[d]^{\textup{Satake}}_{\wr}\ar[r]^{\Res_{w_{K}}} & \calA_{w_{K}}\ar[d]^{\Phi_{w_{K}}}_{\wr}\\
\Rep(\hatG)\ar[r]^{\Res^{\hatG}_{\hatH}} & \Rep(\hatH)}
\end{equation*}
where $\Res^{\hatG}_{\hatH}$ is the restriction functor. Moreover, the monodromy operator on $Z_{\infty}(-)$ induces a natural automorphism of $\Res_{w_{K}}$, which corresponds to the natural automorphism of $u_{w_{K}}$ on $\Res^{\hatG}_{\hatH}$. There is a bijection defined by Lusztig \cite[Theorem 4.8(b)]{Cell4}
\begin{equation*}
\{\textup{two sided cells of } \tilW\}\isom\{\textup{unipotent classes in } \hatG\}
\end{equation*}
By \cite[Theorem 2]{B}, the unipotent element $u_{w_{K}}$ is in the unipotent class corresponding to $\unc$ under Lusztig's bijection. Since $G$ is not of type $A_{1}$, $\ell(w_{K})>0$, hence $u_{w_{K}}$ is not the regular unipotent class (this is because $a(w_K)=\ell(w_K)>0$, while the two-sided cell corresponding to the regular class has $a$-value equal to 0). Therefore $\log(u_{w_{K}})^{2h-2}$ is zero on $\hatg$, because only regular nilpotent elements have a Jordan block of size $2h-1$ under the adjoint representation. Because $\log(u_{w_{K}})$ intertwines with the logarithmic monodromy on $\Res_{w_{K}}(-)$, the logarithmic monodromy $M'$ on $[Z_{\infty}(\IC_{\gamma})*C_{\bP_{\infty}}]=\Res_{w_{K}}(\IC_{\gamma})$ must satisfy $M'^{2h-2}=0$, as a morphism in the Serre quotient $\calP_{\unc}$. Hence $M'^{2h-2}$ as a morphism in the category $\calP_{\leq\unc}$ factors as
\begin{equation*}
M'^{2h-2}:Z_{\infty}(\IC_{\gamma})*C_{\bP_{\infty}}\twoheadrightarrow Q\hookrightarrow Z_{\infty}(\IC_{\gamma})*C_{\bP_{\infty}}(2-2h).
\end{equation*}
where the image $Q$ lies in $\calP_{<\unc}$. However, we have another factorization \eqref{factorC}, hence there must be an arrow $q:Q\to C_{\bP_{\infty}}(1-h)$ through which the inclusion $Q\hookrightarrow Z_{\infty}(\IC_{\gamma})*C_{\bP_{\infty}}(2-2h)$ factors. However, $C_{\bP_{\infty}}$ is an irreducible object of $\calP_{\leq\unc}$ which does not lie in $\calP_{<\unc}$ while $Q\in\calP_{<\unc}$, such an arrow $q$ must be zero. This means $M'^{2h-2}=0$ as a morphism in $\calP_{\leq\unc}$ or $\calP$, which contradicts our assumption. This proves that $N$ is not regular.

It remains to show that $N\neq0$. Suppose $N=0$, then the local system $\calE$ is unramified on $\PP^{1}-\{0,1\}$ and tame at $0$ (by Proposition \ref{p:mono0}) and at $1$ (by Corollary \ref{c:tame}). The tame fundamental group $\pi_{1}^{\tame}(\PP^{1}-\{0,1\})$ is topologically generated by one element, which is both the generator of $I_{0}^{\tame}$ and $I_{1}^{\tame}$. Since the local monodromy at $1$ is unipotent element while the local monodromy at $0$ is not according to Proposition \ref{p:mono0}, this is impossible. Therefore $N$ is neither zero nor regular, proving the Proposition.
\end{proof}

\begin{remark} When $G$ is of type $A_{1}$, $\bP_{\infty}$ is also an Iwahori subgroup. The same argument as in Proposition \ref{p:mono1} shows that the local monodromy at $\infty$ is also regular unipotent.
\end{remark}

\subsection{Global geometric monodromy}\label{ss:global}
In this section, we assume $k$ to be an algebraically closed field, and we study the Zariski closure of the image of $\rho_{k}$ (also called the {\em geometric monodromy group} of the local system $\calE$).

\begin{theorem}\label{th:geommono}
Let $\hatG_{\rho}$ be the Zariski closure of the image of $\rho_k$. Then the neutral component of $\hatG_{\rho}$ is a semisimple group, and
\begin{equation*}
\hatG_{\rho}\begin{cases}=\hatG & \textup{ if }\hatG \textup{ is of type } A_{1}, E_{7}, E_{8}\textup{ or } G_{2}\\
\supset\SO_{4n-1} & \textup{ if }\hatG=\PSO_{4n}, n\geq3\\
\supset G_{2} & \textup{ if }\hatG=\PSO_{8}.\end{cases}
\end{equation*}
\end{theorem}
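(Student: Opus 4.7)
The argument has three ingredients: reductiveness of $\hatG_\rho^0$, the classification of subgroups of $\hatG$ containing a regular unipotent element, and the local monodromy analysis from Section~\ref{ss:loc}.

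First I will show $\hatG_\rho^0$ is reductive. By Theorem~\ref{th:eigen}(4), each $\calE(\calK)$ is pure of weight zero when $k$ is a finite field, which by Deligne's theorem implies that the neutral component of the monodromy group is reductive in that case. For general algebraically closed $k$, one spreads $\calE$ over an open subscheme of $\Spec\ZZ$ via Proposition~\ref{p:int} and specializes to a finite residue field; the specialization argument transfers reductiveness to the generic fiber. By Proposition~\ref{p:mono1}, the local monodromy at $t=1$ is a regular unipotent element $u\in\hatG$, which lies in $\hatG_\rho^0$ since it sits on a one-parameter subgroup through the identity. Because the connected centralizer of a regular unipotent in $\hatG$ is unipotent, the connected center $Z(\hatG_\rho^0)^0$ is both a torus and unipotent, hence trivial; so $\hatG_\rho^0$ is semisimple.

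Next I apply the classification of closed connected reductive subgroups $H$ of a simple algebraic group $\hatG$ that contain a regular unipotent element of $\hatG$, due to Dynkin, Saxl--Seitz, and Testerman. The upshot is as follows. For $\hatG$ of exceptional type $E_7$, $E_8$, or $G_2$, the only proper such $H$, up to conjugacy, is a principal $A_1$ (whose image in the adjoint group is a $\PGL_2$). For $\hatG$ of type $D_{2n}$ with $n\geq 3$, the proper ones are conjugate to subgroups of $\SO_{4n-1}$ (the stabilizer of a non-isotropic vector) or to a principal $A_1$. For $\PSO_8$, the additional possibility is $G_2\hookrightarrow\textrm{Spin}_7\hookrightarrow\textrm{Spin}_8$. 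For $\hatG=\PGL_2$ the only possibility is a Borel subgroup.

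Finally I rule out the small overgroup using Proposition~\ref{p:monoinf}. In a principal $A_1\subset\hatG$, by the defining property of the principal $A_1$, every non-trivial unipotent element is regular unipotent in $\hatG$. But by Proposition~\ref{p:monoinf}, when $G$ is not of type $A_1$ the local monodromy at $\infty$ is unipotent, non-trivial, and non-regular in $\hatG$; this excludes the principal $A_1$. Combined with the classification, this gives $\hatG_\rho=\hatG$ for $\hatG$ of type $E_7$, $E_8$, or $G_2$, and $\hatG_\rho^0\supset\SO_{4n-1}$ (resp.\ $\supset G_2$ in the $D_4$ case) for $\hatG$ of type $D_{2n}$. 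For type $A_1$, I check directly that $g_0g_1g_\infty=1$ with $g_1$ and $g_\infty$ both regular unipotent in $\PGL_2$ (Proposition~\ref{p:mono1} and the remark after Proposition~\ref{p:monoinf}) and the semisimple part of $g_0$ a non-trivial involution of $\PGL_2$ (Proposition~\ref{p:mono0}) cannot be realized inside a proper (solvable) subgroup of $\PGL_2$, hence $\hatG_\rho=\PGL_2$. The main obstacle will be establishing reductiveness of $\hatG_\rho^0$ for general $k$, combining purity from Theorem~\ref{th:eigen}(4) with a specialization argument via Proposition~\ref{p:int}; once the classification and Proposition~\ref{p:monoinf} are in hand, the exclusion of the principal $A_1$ is straightforward.
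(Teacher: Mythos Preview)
Your three-step skeleton (semisimplicity of $\hatG_\rho^0$; Dynkin-type classification of semisimple overgroups of a principal $\PGL_2$; exclusion of the principal $\PGL_2$ via Proposition~\ref{p:monoinf}) is exactly the paper's argument. Two details differ from the paper and one of them needs tightening.

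For reductiveness in characteristic zero, the paper does not specialize to finite fields; it argues directly that each $\calE(\calK)$ is a summand of a pushforward along a proper map and invokes the Decomposition Theorem \cite[Th\'eor\`eme~6.2.5]{BBD}. Your specialization route can be made to work, but only after you observe that $\calE$ is tame (Corollary~\ref{c:tame} and Proposition~\ref{p:mono0}) and then invoke the specialization isomorphism for tame fundamental groups of $\PP^1-\{0,1,\infty\}$; without that, ``specialization transfers reductiveness to the generic fiber'' is in the wrong direction and is not justified as stated. For the passage from reductive to semisimple, your centralizer argument (the connected center is a torus centralizing a regular unipotent in the adjoint group $\hatG$, hence trivial) is correct and is a clean alternative to the paper's argument, which instead kills $\hatH^{\ab}$ by noting that $\zeta_1,\zeta_\infty$ map to unipotents and $\zeta_0$ has order-two semisimple part, so the image in any torus quotient is finite.

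Your treatment of type $A_1$ is muddled: you list a Borel as a ``closed connected reductive'' subgroup, which it is not, and then give a separate solvable-exclusion argument. None of this is needed: once $\hatG_\rho^0$ is semisimple and contains a regular unipotent of $\PGL_2$, it is already all of $\PGL_2$, so the $A_1$ case falls out of the classification step with no extra work, exactly as in the paper.
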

\begin{proof}
Let $\hatH$ be the neutral component of the Zariski closure of the image of $\rho_{k}$. 

We first claim $\hatH$ is a semisimple group. 

For char$(k)>0$, we may assume $k=\FF_p$ for some prime $p\nmid2\ell$. Then by Theorem \ref{th:eigen}(4), $\calE(\calK)$ is pure of weight zero, hence geometrically semisimple by \cite[Corollaire 5.4.6]{BBD}. Our claim then follows from \cite[Corollaire 1.3.9]{WeilII} (in \cite{WeilII}, Deligne remarked that the proof only uses the fact that the local system is {\em geometrically} semisimple). 

For char$(k)=0$, we may reduce to the case $k=\CC$. By the description of $\calE(\calK)$ given in Section \eqref{T*}, it is a direct summand of a semisimple perverse sheaf of geometric origin along the proper map $\GR_{\leq\lambda}\to\X{}$, therefore it is semisimple by the Decomposition Theorem \cite[Th\'eor\`eme 6.2.5]{BBD}. This implies $\hatH$ is a reductive group. Now suppose $S^{0}=\hatH^{ab}$ is nontrivial. Then $\pi_{1}(\X{})$ maps densely into an algebraic group $S(\Ql)$ with neutral component $S^{0}$ being a torus. The group $\pi_{1}(\X{})$ is topologically generated by the loops $\zeta_{0},\zeta_{1}$ and $\zeta_{\infty}$ around the punctures. We have already seen from Proposition \ref{p:mono1} and \ref{p:monoinf} that $\rho_{k}(\zeta_{1})$ and $\rho_{k}(\zeta_{\infty})$ are unipotent, hence have trivial image in $S(\Ql)$. By Proposition \ref{p:mono0}, $\rho_{k}(\zeta_{0})$ has semisimple part of order 2, therefore the image of $\pi_{1}(\X{})$ in $S(\Ql)$ is generated by an element of order at most 2, so cannot be Zariski dense. This contradiction implies that $S^{0}$ is trivial, i.e., $\hatH$ is semisimple.

By Proposition \ref{p:mono1}, $\hatH$ contains a regular unipotent element. Hence $\hatH$ is a semisimple subgroup of $\hatG$ containing a principal $\PGL_2$. According to Dynkin's classification (see \cite[Page 1500]{FG}), either $\hatH$ is the principal $\PGL_2$, or $\hatH=\hatG$, or $\hatH=\SO_{4n-1}$ if $\hatG=\PSO_{4n}$, or $\hatH=G_{2}$ if $\hatG=\PSO_{8}$.

If $G$ is not of type $A_{1}$, by Proposition \ref{p:monoinf}, the geometric monodromy at $\infty$ is unipotent but neither trivial nor regular, therefore not in the principal $\PGL_2$. Hence $\hatH$ cannot be equal to the principal $\PGL_2$. This finishes the proof.
\end{proof}

\subsection{Image of Galois representations}
In this section, we work with the base field $k=\QQ$. For any rational number $x\in\QQ\backslash\{0,1\}$, we get a closed point $i_{x}:\Spec\QQ\hookrightarrow\X{\QQ}$ and hence an embedding $i_{x,\#}:\GQ\to\pi_{1}(\X{\QQ})$, well-defined up to conjugacy. Restricting the representation $\rho_{\QQ}$ (attached to the $\hatG$-local system $\calE_{\QQ}$) using $i_{x,\#}$, we get a continuous Galois representation
\begin{equation*}
\rho_{x}:\GQ\to\hatG(\Ql).
\end{equation*}
A variant of Hilbert irreducibility \cite[Theorem 2]{T} shows that for $x$ away from a thin set of $\QQ$, the image of $\rho_{x}$ is the same as the image of $\rho_{\QQ}$, and therefore Zariski dense if $G$ is of type $A_{1},E_{7},E_{8}$ or $G_{2}$ by Theorem \ref{th:geommono}. We would like to give an effective criterion for $\rho_{x}$ to have large image.

\begin{prop}\label{p:Galrep}
Let $\rho:\GQ\to\hatG(\Ql)$ be a continuous $\ell$-adic representation. Suppose
\begin{enumerate}
\item For almost all primes $p$, $\rho_{p}:=\rho|_{\GQp}$ is unramified and pure of weight 0;
\item For some prime $p'\neq\ell$, $\rho_{p'}$ is tamely ramified, and a topological generator of the tame inertia group at $p'$ maps to a regular unipotent element in $\hatG(\Ql)$;
\item The local representation $\rho_{\ell}$ is Hodge-Tate (i.e., for any algebraic representation $V$ of $\hatG$, the induced action of $\Gal(\Qlbar/\Ql)$ on $V$ is Hodge-Tate).
\end{enumerate}
Let $\hatG_{\rho}\subset\hatG$ be the Zariski closure of the image of $\rho$. Then the neutral component $\hatG^{\circ}_{\rho}$ is a reductive subgroup of $\hatG$ containing a principal $\PGL_{2}$.
\end{prop}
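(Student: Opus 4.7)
The plan is to establish the two conclusions in sequence: reductivity of $\hatG^{\circ}_{\rho}$ first, then the containment of a principal $\PGL_{2}$.

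For reductivity, I would fix a faithful embedding $\hatG\hookrightarrow\GL(V)$ and let $U\subset\hatG^{\circ}_{\rho}$ be the unipotent radical, which we must show is trivial. After passing to a finite extension of $\QQ$ over which the image of $\rho$ lies in $\hatG^{\circ}_{\rho}$ (this does not affect the Zariski closure of the identity component), the filtration of $V$ by iterated $U$-fixed subspaces is $\GQ$-stable with $U$ acting trivially on the associated graded $\gr V$, so the $\GQ$-action on $\gr V$ factors through the reductive quotient $\hatG^{\circ}_{\rho}/U$. It then suffices to show $V\simeq\gr V$ as $\GQ$-modules, since $U$ would then act trivially on the faithful representation $V$, forcing $U=1$. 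By hypothesis (1), Chebotarev density, and Brauer--Nesbitt, the semisimplifications of $V$ and $\gr V$ already agree. The splitting itself should follow from the classical principle that for a Galois representation of a number field which is both pure at almost all primes and Hodge--Tate at $\ell$, any $\GQ$-stable filtration splits: a standard consequence of combining hypothesis (3) with the purity of Frobenius eigenvalues in (1), via a weight argument on Sen's operator or, equivalently, via $p$-adic Hodge-theoretic semisimplicity of Hodge--Tate representations of global origin.

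For the principal $\PGL_{2}$ containment, hypothesis (2) supplies an element $u:=\rho(\sigma)\in\hatG(\Ql)$ that is regular unipotent in $\hatG$, where $\sigma$ is a topological generator of the tame inertia at $p'$. The Zariski closure of $\langle u\rangle$ in $\hatG$ is isomorphic to $\Ga$ and hence connected, so $u\in\hatG^{\circ}_{\rho}$. Once reductivity is known, Jacobson--Morozov applied inside $\hatG^{\circ}_{\rho}$ produces an $\sl_{2}$-triple $(e,h,f)\subset\Lie\hatG^{\circ}_{\rho}$ with $\exp(e)=u$. Since $e$ is regular nilpotent in $\hatg$, this $\sl_{2}$-triple is a principal $\sl_{2}$-triple in $\hatG$ by the conjugacy theorem for such triples, and the integrated subgroup of $\hatG^{\circ}_{\rho}$ is therefore a principal $\SL_{2}$ of $\hatG$ whose image realizes the required principal $\PGL_{2}$.

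The main obstacle is the reductivity step, specifically the assertion that the $U$-filtration of $V$ splits. The cleanest route I can see is to reduce the splitting to a local assertion at $\ell$: any non-split extension of two Hodge--Tate Galois representations by pure weight-zero global representations should violate the compatibility between the Hodge--Tate weights at $\ell$ and the Frobenius weights at almost all other primes (the latter being rigidified by (1) through Chebotarev). Locating the appropriate ``off the shelf'' reference for this splitting, in the style of the results of Faltings or Fontaine on extensions of Hodge--Tate representations, is the technical point requiring the most care.
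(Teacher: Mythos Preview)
Your reductivity step has a genuine gap. The assertion that a Galois representation which is pure of weight $0$ almost everywhere and Hodge--Tate at $\ell$ must be semisimple is not a theorem; it is essentially a case of the Fontaine--Mazur philosophy and is not known in general. There is no ``off the shelf'' reference, and your local-at-$\ell$ heuristic does not work: an extension of two Hodge--Tate representations with the same weights is again Hodge--Tate, so nothing local at $\ell$ obstructs it, and after you pass to a finite extension of $\QQ$ (as you do, to land in $\hatG^{\circ}_{\rho}$) you have also lost the arithmetic leverage that rules out stray $\ZZ_\ell$-extensions. So the filtration-splitting argument cannot be completed as written, and since your principal $\PGL_2$ step presupposes reductivity, the whole argument stalls.

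The paper avoids this entirely by reversing the order of the two conclusions and using only the \emph{one-dimensional} case of your desired principle, which \emph{is} elementary: for any subquotient $V'$, $\det(V')$ is Hodge--Tate and pure of weight $0$, hence a finite-order character. This is applied not to split the $R$-filtration, but to show that each graded piece $\Gr^F_i\hatg$ is symmetric under the grading coming from the Frobenius $s$ at $p'$ (which satisfies $\Ad(s)N=p'N$). Combined with the hard-Lefschetz-type isomorphism $N^{2n}:\hatg(-n)\isom\hatg(n)$, this symmetry forces the image $\bar N$ of $N$ in the reductive quotient $\hatH=\hatG^{\circ}_{\rho}/R$ to still have a Jordan block of size $2h-1$ on $\hatg$. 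Choosing any section $\iota:\hatH\hookrightarrow\hatG^{\circ}_{\rho}$, one sees $\iota(\bar N)$ is regular nilpotent in $\hatg$, so $\iota(\hatH)$ contains a principal $\PGL_2$. Only \emph{then} is $R$ killed: $\mathfrak r=\Lie R$ is a $\PGL_2$-submodule of $\hatg$ consisting of nilpotent elements, but any nonzero $\PGL_2$-submodule of $\hatg$ meets the weight-$0$ piece $\hatg(0)$, a Cartan subalgebra, hence contains semisimple elements. So $\mathfrak r=0$.
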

\begin{proof} The proof is partially inspired by Scholl's argument in \cite[Proposition 3]{Scholl}.

Let $R\subset\hatG_{\rho}^{\circ}$ be the unipotent radical and let $\hatH$ be the connected reductive quotient $\hatG_{\rho}^{\circ}/R$. All these groups are over $\Ql$.

Let $V$ be an algebraic representation of $\hatG$, viewed as a $\GQ$-module via $\rho$. We first claim that for any subquotient $V'$ of $V$ as a $\GQ$-module, $\det(V')$ is a finite order character of $\GQ$. In fact, since $\rho$ is Hodge-Tate, so is $V'$. Hence we can write $\det(V')=\chi_\ell^N\epsilon$ for some finite order character $\epsilon$ of $\GQ$ and some integer $N$, where $\chi_{\ell}$ is the $\ell$-adic cyclotomic character. By (1), $\det(V')$ is pure of weight zero at almost all $p$, hence $\Frob_p$ acts on $\det(V')$ by a number with archimedean norm $1$. However, $\chi_\ell(\Frob_p)=p$, hence $N=0$. This implies $\det(V')$ is a finite order character of $\GQ$.

By assumption $\hatG_{\rho}^{\circ}(\Ql)$ contains a regular unipotent element $u$, which is the image of a topological generator of the tame inertia group at $p'$. Let $N=\log(u)\in\hatg$. Let $s$ be the image of a (lifting of the) Frobenius at $p'$. Then $\Ad(s)N=p'N$. Since $N$ is regular nilpotent, the element $s$ is semisimple and well-defined up to conjugacy. The action of $s$ on $\hatg$ determines a grading
\begin{equation}\label{grading}
\hatg=\bigoplus_{n\in\ZZ}\hatg(n)
\end{equation}
such that $s$ acts on $\hatg(n)$ by $(p')^n$, and $N:\hatg(n)\to\hatg(n+1)$. In fact, taking $N$ to be the sum of simple root generators, this grading is the same as the grading by the height of the roots. In particular, $\hatg(n)\neq0$ only for $1-h\leq n\leq h-1$, where $h$ is the Coxeter number of $\hatG$, and $\dim\hatg(1-h)=\dim\hatg(h-1)=1$. Moreover, the map
\begin{equation}\label{hardLef}
N^{2n}:\hatg(-n)\to\hatg(n)
\end{equation}
is an isomorphism for any $n\geq0$. 

For any subquotient $\hatG_{\rho}$-module $V$ of $\hatg$, we can similarly define a grading $V=\bigoplus_nV(n)$ under the action of $s$. We say $V$ is {\em symmetric} under the $s$-action if $\dim V(-n)=\dim V(n)$ for any $n$.

The action of the unipotent group $R$ on $\hatg$ gives a canonical increasing filtration of $\hatg$: $F_0\hatg=0$, $F_i\hatg/F_{i-1}\hatg=(\hatg/F_{i-1}\hatg)^R$. Therefore $\hatG_{\rho}$ acts on each $\Gr^F_i\hatg$ via the reductive quotient $\hatG_{\rho}/R$.

We claim that for each $i$, the associated graded $\hatG_\rho$-modules $\Gr^F_i\hatg$ are symmetric under the $s$-action. In fact, it suffices to show that each $F_i\hatg$ is symmetric under the $s$-action. Since \eqref{hardLef} is an isomorphism, $N^{2n}:F_i\hatg(-n)\to F_i\hatg(n)$ is injective. Hence $\dim F_i\hatg(-n)\leq\dim F_i\hatg(n)$ for any $n\geq0$.  On the other hand, we argued that $\det(F_i\hatg)$ is a finite order character of $\GQ$, so $s$ can only act on $\det(F_i\hatg)$ as identity. This implies that $\dim F_i\hatg(-n)=\dim F_i\hatg(n)$ for all $n$.

Let $\Nbar\in\hath=\Lie\hatH$ be the image of $N$. There is a unique $i$ such that $\Gr^F_i\hatg(-h+1)\neq0$, hence $\Gr^F_i\hatg(m)\neq0$ by the Claim. The iteration
\begin{equation*}
\Nbar^{2h-2}:\Gr^F_i\hatg(-h+1)\to\Gr^F_i\hatg(h-1)
\end{equation*}
is necessarily an isomorphism: for otherwise $N^{2h-2}\hatg(-h+1)=\hatg(h-1)$ would fall inside $F_{i-1}\hatg$, contradiction. Therefore $\Nbar$ acts on $\hatg$ with a Jordan block of size $2h-1$.

Let $\iota:\hatH\to\hatG^{\circ}_\rho$ be any section (a group homomorphism). Now $\iota(\Nbar)$ acts on $\Gr^F_i\hatg$ with a Jordan block of length $2h-1$, therefore the action on $\hatg$ has a Jordan block of size $\geq2h-1$. However, the only nilpotent class in $\hatg$ with a Jordan block of size $\geq2h-1$ under the adjoint representation is the regular nilpotent class. Hence $\iota(\Nbar)$ is a regular nilpotent class in $\hatg$. Since $\iota(\hatH)$ is a reductive group, it contains a principal $\PGL_2\subset\hatG$. 

Now fix such a principal $\PGL_{2}\subset\iota(\hatH)$. Consider the adjoint action of $\PGL_{2}$ on $\hatg$. The action of the maximal torus of $\PGL_2$ induces a similar grading as in \eqref{grading}, with $\hatg(0)$ a Cartan subalgebra of $\hatg$. The Lie algebra $\frr$ of $R$ is a $\PGL_{2}$-submodule of $\hatg$ consisting entirely of nilpotent elements. However, any nonzero $\PGL_2$-submodule of $\hatg$ has a nonzero intersection with $\hatg(0)$, hence containing nonzero semisimple elements. This forces $R$ to be trivial. Therefore $\hatG_\rho$ is a reductive subgroup of $\hatG$ containing a principal $\PGL_2$.
\end{proof}

Recall that in Proposition \ref{p:int} we have extended the local system $\calE_{\QQ}$ to $\X{\Z}$ for some integer $N$. 
\begin{prop}\label{p:Qmono} When $G$ is of type $A_{1}, E_{7},E_{8}$ or $G_{2}$.
\begin{enumerate}
\item []
\item Let $(a,b)$ be nonzero coprime integers such that both $a-b$ and $b$ have prime divisors not dividing $2\ell N$. Let $x=\frac{a}{b}$. Then the image of $\rho_{x}$ is Zariski dense in $\hatG(\Ql)$. 
\item There are infinitely many rational numbers $x\in\QQ-\{0,1\}$ such that $\rho_{x}$ are mutually non-isomorphic and all have Zariski dense image in $\hatG(\Ql)$.
\end{enumerate}
\end{prop}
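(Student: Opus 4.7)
The plan is to deduce part (1) from Proposition \ref{p:Galrep} by verifying its three hypotheses for $\rho_x$ at carefully chosen primes, then invoke Dynkin's classification (as used in the proof of Theorem \ref{th:geommono}) to upgrade the reductive image to all of $\hat G$.

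Since $(a,b)=1$, choose primes $p_1 \mid (a-b)$ and $p_\infty \mid b$, both coprime to $2\ell N$, as furnished by hypothesis. The rational point $x = a/b$ extends to a $\ZZ_p$-point of $\X{\Z}$ for every $p \nmid 2\ell N\cdot ab(a-b)$; viewed as a $\ZZ_{p_1}$-point of $\PP^1$ it reduces modulo $p_1$ to the puncture $1 \in \PP^1(\FF_{p_1})$, and modulo $p_\infty$ to $\infty$. By Proposition \ref{p:int}, $\un{\calE}_{\chi}$ is lisse on $\X{\Z}$; combined with the local monodromy statements of Section \ref{ss:loc} applied to each geometric fiber, the sheaf $\un{\calE}_{\chi}$ extends tamely across the three sections $\{0\},\{1\},\{\infty\}\subset\PP^1_\Z$. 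A standard tame specialization argument then identifies the image under $\rho_x$ of a topological generator of $I_{p_1}^{\tame}$ with the tame monodromy of $\un{\calE}_{\chi}$ around $1 \in \X{\FF_{p_1}}$, which by Proposition \ref{p:mono1} is regular unipotent; analogously at $p_\infty$, the image is the unipotent element from Proposition \ref{p:monoinf}.

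The three hypotheses of Proposition \ref{p:Galrep} for $\rho_x$ are now at hand. Condition (1) follows from lisseness of $\un{\calE}_{\chi}$ on $\X{\Z}$ and the purity statement of Theorem \ref{th:eigen}(4). Condition (2) is exactly the regular unipotent statement at $p_1$ from the previous paragraph. Condition (3) holds because, by Proposition \ref{p:motivic}, $\rho_x^{\qm}$ is the $\ell$-adic realization of the motive $M_{\chi,x}$, hence de Rham on $\GQl$ by the $\ell$-adic de Rham comparison theorem of Faltings--Tsuji; since $V^{\qm}$ is a faithful representation of $\hat G$, every algebraic representation of $\hat G$ then becomes Hodge--Tate under $\rho_x$. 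Thus Proposition \ref{p:Galrep} gives that $\hat G_{\rho_x}^{\circ}$ is a connected reductive subgroup of $\hat G$ containing a principal $\PGL_2$. Dynkin's classification (as in Theorem \ref{th:geommono}) leaves only two candidates for $\hat G$ of type $A_1, E_7, E_8, G_2$: the principal $\PGL_2$ and $\hat G$ itself. For $\hat G = \PGL_2$ these coincide. In types $E_7, E_8, G_2$, the $p_\infty$--specialization produces a unipotent element in $\hat G_{\rho_x}^{\circ}$ that is neither trivial nor regular, so it cannot lie in a principal $\PGL_2$. Hence $\hat G_{\rho_x}^{\circ} = \hat G$, and by connectedness of $\hat G$ the image of $\rho_x$ is Zariski dense.

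For part (2), take $x_n = 1/p_n$ as $p_n$ ranges over primes not dividing $2\ell N$ and such that $1-p_n$ admits a prime factor not dividing $2\ell N$; all sufficiently large primes qualify. Each such $x_n$ satisfies the hypothesis of part (1), so $\rho_{x_n}$ has Zariski dense image. By the specialization at $p_n$, $\rho_{x_n}$ is ramified at $p_n$; for $m \neq n$ with $p_n$ exceeding all prime factors of $p_m(1-p_m)$, $\rho_{x_m}$ is unramified at $p_n$. Thus the $\rho_{x_n}$ are pairwise non-isomorphic. The main technical point I expect to be delicate is the tame specialization step: one needs to identify the inertial $\GQp$-action on a geometric stalk of $\un{\calE}_{\chi}$ with the tame monodromy around the corresponding puncture in the special fiber, using the tameness established in Section \ref{ss:loc} and a normal-crossings compactification of $\X{\Z}$ along $\{0,1,\infty\}$.
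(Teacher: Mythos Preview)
Your approach is essentially the same as the paper's: verify the hypotheses of Proposition \ref{p:Galrep} via tame specialization of the integral local system (the paper cites \cite[Proposition A.4]{DR} for exactly the step you flag as delicate), then use Dynkin's list together with the extra non-regular unipotent at the $\infty$-specialization to pin down $\hatG$. Two small corrections: in part (1) Dynkin's classification applies to \emph{semisimple} subgroups, so you should pass from the reductive $\hatG_{\rho_x}^{\circ}$ to its derived group (which still contains the principal $\PGL_2$ and the auxiliary unipotent) before invoking it, as the paper does; and in part (2) your claim that ``all sufficiently large primes qualify'' is not justified (it would require knowing that $p-1$ is $S$-smooth for only finitely many primes $p$), though it is easy to produce infinitely many primes $p$ with $p-1$ divisible by a fixed prime $q>2\ell N$, which suffices.
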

\begin{proof}
(1) For each prime $p$, let $I_{p}\subset\GQp$ be the inertia group. Let $Z\subset\PP^{1}_{\Z}$ be the closure of the $\QQ$-point $x$, then the projection $\pi:Z\to\Spec\Z$ is an isomorphism. For any prime $p\nmid2\ell N$, the reduction of $Z$ at $p$ is a point $x_{p}\in\PP^{1}(\FF_{p})$. Then the proof of \cite[Proposition A.4]{DR} shows the following fact: $\rho_{x}(I_{p})$ is contained in the image of $\rho_{\FF_{p}}|_{I_{x_{p}}}$ ($I_{x_{p}}\subset\Gal(\FF_{p}(t)^{s}/\FF_{p}(t))$ being the inertia of the point $x_{p}\in\PP^{1}(\FF_{p})$); moreover, if $\rho_{\FF_{p}}|_{I_{x_{p}}}$ is tame and unipotent, $\rho_{x}|_{I_{p}}$ is also tame and maps to the same unipotent class. Using the calculation of the local monodromy in Section \ref{ss:loc}, we have for any $p\nmid2\ell N$,
\begin{itemize}
\item If $p\mid a-b$ (hence $x_{p}=1$), then $\rho_{x}(I_{p})$ contains a regular unipotent element;
\item If $p\mid b$ (hence $x_{p}=\infty$) and $G$ is not of type $A_{1}$, then $\rho_{x}(I_{p})$ contains a unipotent element which is neither trivial nor regular.
\item If $p$ does not divide $ab(a-b)$, then $\rho_{x}$ is unramified at $p$. 
\end{itemize}
We check that $\rho_{x}$ satisfies the assumptions of Proposition \ref{p:Galrep}: the unramifiedness of condition (1) is check above and purity is proved in Theorem \ref{th:eigen}(4); condition (2) is also checked above since we do have a prime $p\mid a-b$ and $p\nmid2\ell N$; the Hodge-Tate condition (3) follows from the motivic interpretation (Proposition \ref{p:motivic}) and the theorem of Faltings \cite{Faltings}. Applying Proposition \ref{p:Galrep} to $\rho_{x}$, we conclude that the Zariski closure $\hatG_{\rho_{x}}$ of the image of $\rho_{x}$ is a reductive subgroup of $\hatG$ containing a principal $\PGL_{2}$. Moreover, if $G$ is not of type $A_{1}$, it contains another unipotent element which is neither regular nor trivial. The argument of Theorem \ref{th:geommono} then shows that the derived group of $\hatG_{\rho_{x}}$ (which is semisimple and contains a principal $\PGL_{2}$) is already the whole $\hatG$.

(2) Choose an increasing sequence of prime numbers $2\ell N<p_{1}<p_{2}<\cdots$. Define $x_{i}=\frac{p_{i}+p_{i+1}}{p_{i+1}}$, $i=1,2,\cdots$. Then $x_{i}$ satisfies the conditions in (1), and the place where $\rho_{x_{i}}$ has regular unipotent monodromy is $p_{i}$ together with possibly certain places dividing $2\ell N$. Therefore these $\rho_{x_{i}}$ are mutually non-isomorphic and all have Zariski dense image in $\hatG(\Ql)$.  
\end{proof}

\subsection{Application to the Inverse Galois Problem}\label{ss:invGal}

\subsubsection{Betti realization of the local system}
We work with $k=\CC$ and use analytic topology instead of \'etale topology. We change $\ell$-adic cohomology to singular cohomology. The main results in the previous discussion still hold in this situation. In particular, for each $\chi\in\tilA^{*}_{0,\odd}$, we get a representation of the topological fundamental group
\begin{equation}
\rho^{\topo}_{\chi}:\pi_{1}^{\topo}(\X{\CC})\to\hatG(\QQ).
\end{equation}

\begin{theorem}\label{th:invGal} For sufficiently large prime $\ell$, the finite simple groups $E_{8}(\FF_{\ell})$ and $G_{2}(\FF_{\ell})$ are Galois groups over $\QQ$.
\end{theorem}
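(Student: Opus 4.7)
The plan is to obtain a surjection $\GQ\twoheadrightarrow\hatG(\FF_\ell)$ by reducing the $\hatG(\QQ)$-valued Betti monodromy representation modulo $\ell$ and then specializing at a rational point. Fix $\chi\in\tilA_0(\kbar)^*_{\odd}$ and consider the Betti representation $\rho^{\topo}_{\chi}\colon\pi_1^{\topo}(\X{\CC})\to\hatG(\QQ)$ recorded just above the theorem statement. By Theorem \ref{th:geommono} applied to the base field $\CC$, when $\hatG$ is of type $E_8$ or $G_2$ the Zariski closure of the image equals $\hatG$; since $\pi_1^{\topo}(\X{\CC})$ is finitely generated, the image $\Gamma\subset\hatG(\QQ)$ is a finitely generated Zariski-dense subgroup.

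I would then invoke the strong approximation theorem of Matthews--Vaserstein--Weisfeiler: for a simply connected almost simple algebraic group $\hatG$ over $\QQ$ (both $E_8$ and $G_2$ are simultaneously simply connected and adjoint, with trivial center), every finitely generated Zariski-dense subgroup $\Gamma\subset\hatG(\QQ)$ surjects onto $\hatG(\FF_\ell)$ under reduction modulo $\ell$ for all but finitely many primes $\ell$. To transport this to the arithmetic side I use the comparison isomorphism: by Proposition \ref{p:motivic}, the $\ell$-adic representation $\rho_{\chi}^{\qm}$ and the Betti representation both arise from the single motive $M_{\chi,x}$, so choosing a Galois-stable $\ZZ_\ell$-lattice in the $\ell$-adic realization and reducing modulo $\ell$ yields, after conjugation in $\hatG(\FF_\ell)$, the mod-$\ell$ reduction of $\rho^{\topo}_{\chi}$ via the natural map $\pi_1^{\topo}(\X{\CC})\to\pi_1(\X{\Qbar})$. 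Thus for all sufficiently large $\ell$ one obtains a surjection $\bar\rho_{\chi}\colon\pi_1(\X{\Qbar})\twoheadrightarrow\hatG(\FF_\ell)$, which automatically extends to a surjection $\pi_1(\X{\QQ})\twoheadrightarrow\hatG(\FF_\ell)$.

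To conclude, I would apply a version of Hilbert irreducibility, as in \cite[Theorem 2]{T}, to this surjection: there exists $x\in\X{\QQ}$ such that the pullback $\bar\rho_{\chi,x}\colon\GQ\to\hatG(\FF_\ell)$ remains surjective. Since for all large primes $\ell$ the groups $E_8(\FF_\ell)$ and $G_2(\FF_\ell)$ are finite simple (their centers are trivial and they are universal Chevalley groups, simple for $\ell\geq3$), this exhibits them as Galois groups of finite extensions of $\QQ$, proving the theorem. The main technical obstacle is precisely the control of integral structures in the comparison step: one must produce a $\GQ$-stable $\ZZ_\ell$-lattice in the $\ell$-adic realization that matches, modulo $\ell$, with the natural integral form on the Betti side coming from the $\QQ$-structure on the quasi-minuscule representation, and this compatibility must hold uniformly in $\ell$. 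This is the point where the explicit motivic description in Section \ref{ss:description} (via the varieties $\tilY$ and their integral models of Proposition \ref{p:int}) is indispensable.
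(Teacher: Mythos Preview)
Your proposal is correct and follows essentially the same route as the paper's own proof: Zariski density of the Betti image via Theorem \ref{th:geommono}, then Matthews--Vaserstein--Weisfeiler \cite{MVW} for surjectivity of the mod-$\ell$ reduction, then the comparison isomorphism to pass to the arithmetic side, and finally Hilbert irreducibility (the paper cites \cite{SerreGal} rather than \cite{T}). You have also correctly flagged the one technical point the paper spells out more carefully: one must produce an integral form of the local system over $\ZZ[1/N_1]$ on the Betti side, compatible via comparison with the $\ZZ_\ell$-lattice on the \'etale side, so that reduction mod $\ell$ is well-defined and lands in $\hatG(\FF_\ell)$.
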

\begin{proof}
Let $G$ be of type $E_{8}$ or $G_{2}$. We write $\rho^{\topo}_{\chi}$ simply as $\rho^{\topo}$.

In the motivic interpretation of $\calE^{\qm}$ given in \eqref{Eqmin}, we may work with $\ZZ$-coefficients in the analytic topology (resp. $\ZZ_{\ell}$-coefficients in the \'etale topology). For sufficiently large prime $\ell$, $\bH^{2h^{\vee}-2}_{c}(\tilY_{\QQ}/\X{\QQ},\ZZ_{\ell})_{\odd}$ is a free $\ZZ_{\ell}$-module of rank $n=\dim V^{\qm}$. Also, there exists an integer $N_{1}$ such that $\upH^{2h^{\vee}-2}_{c}(\tilY^{\an}_{\CC}/\PP^{1,\an}_{\CC}-\{0,1,\infty\},\ZZ[1/N_{1}])_{\odd}$ a free $\ZZ[1/N_{1}]$-module of rank $n$. This integral lattice gives an integral form of $\hatG$ over $\ZZ[1/N_{1}]$, which, after enlarging $N_{1}$, is assume to be isomorphic to the restriction of the Chevalley group scheme of the same type as $\hatG$. Therefore for primes $\ell\nmid N_{1}$, the comparison isomorphism between singular cohomology and $\ell$-adic cohomology gives a commutative diagram
\begin{equation}\label{topet}
\xymatrix{\pi^{\topo}(\X{\CC})\ar[r]^{\rho^{\topo}}\ar[d] & \GL_{n}(\ZZ[1/N_{1}])\cap\hatG(\QQ)\ar[d]\ar[r]^(.6){\mod\ell} & \hatG(\FF_{\ell})\\
\pi_{1}(\X{\QQ})\ar[r]^{\rho_{\QQ}} & \GL_{n}(\ZZ_{\ell})\cap\hatG(\QQ_{\ell})\ar[ur]^{r_{\ell}}}
\end{equation}

We claim that the first row in the above diagram is surjective for large $\ell$. Let $\Pi\subset\hatG(\QQ)$ be the image of $\rho^{\topo}$, which is finitely generated because $\pi_{1}^{\topo}(\X{\CC})$ is a free group of rank 2. Since $\rho_{\Qbar}$ has Zariski dense image in $\hatG(\Ql)$ (by Theorem \ref{th:geommono}) and $\pi_{1}^{\topo}(\X{\CC})$ is dense in $\pi_{1}(\X{\Qbar})$, $\rho^{\topo}$ also has Zariski dense image in $\hatG(\QQ)$. Therefore $\Pi\subset\hatG(\QQ)$ is both finitely generated and Zariski dense. By \cite[Theorem in the Introduction]{MVW}, for sufficiently large prime $\ell$, the reduction modulo $\ell$ of $\Pi$ surjects onto $\hatG(\FF_{\ell})$.

Consequently, in the diagram \eqref{topet}, the composition $r_{\ell}\circ\rho_{\QQ}$ is also surjective for large $\ell$. Finally we apply Hilbert irreducibility theorem (see \cite[Theorem 3.4.1]{SerreGal}) to conclude that there exists a rational number $x\in\QQ-\{0,1\}$ such that $r_{\ell}\circ\rho_{x}:\GQ\to\GL_{n}(\ZZ_{\ell})\cap\hatG(\QQ_{\ell})\to\hatG(\FF_{\ell})$ is surjective. This solves the inverse Galois problem for the finite simple group $\hatG(\FF_{\ell})$.
\end{proof}

\subsection{Conjectural properties of the local system}\label{ss:further}
\subsubsection{Local monodromy}\label{sss:conjloc}
Lusztig has defined a map \cite{L}
\begin{equation*}
\{\textup{conjugacy classes in } W\}\to\{\textup{unipotent classes in } \hatG\}
\end{equation*}
In particular, if $-1\in W$ gets mapped to a unipotent class $v$ in $\hatG$, which has the property that
\begin{equation*}
\dim Z_{\hatG}(v)=\frac{\#\Phi_G}{2}.
\end{equation*}
We tabulate these unipotent classes using Bala-Carter classification (see \cite{Carter})

\begin{center}
\begin{tabular}{|c|c|}
\hline
Type of $G$ & the unipotent class $v$ \\
\hline
$D_{2n}$ & Jordan blocks $(1,2,\cdots,2,3)$\\
\hline
$E_7$ & $4A_1$\\
\hline
$E_8$ & $4A_1$\\
\hline
$G_2$ & $\tilA_1$\\
\hline
\end{tabular}
\end{center}

\begin{conj}\label{c:0}
Under $\rho_{k}$, a topological generator of the tame inertia group $I_{\infty}^{\tame}$ gets mapped to a unipotent element in the unipotent class $v$.
\end{conj}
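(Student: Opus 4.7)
The plan is to upgrade the qualitative argument of Proposition \ref{p:monoinf} into an exact identification of the unipotent class. By Lemma \ref{l:near} applied at $x=\infty$, the local monodromy of $\calE(\calK)$ at $\infty$ is controlled by the Galois action on the parahoric nearby cycles $Z_{\bP_{\infty}}(\calK)$. Combined with \eqref{pullconv}, the question reduces to understanding the monodromy on $Z_{\infty}(\calK) * C_{\bP_{\infty}}$ for every $\calK \in \Sat$. I would then invoke Bezrukavnikov's theorem, as already used in Proposition \ref{p:monoinf}: the image of the collection $\{Z_{\infty}(\calK) * C_{\bP_{\infty}}\}_{\calK}$ in the Serre quotient $\calP_{\underline{c}}$ (where $\underline{c}$ is the two-sided cell containing $w_{K}$) is monoidally equivalent to $\Rep(\hatH)$ for some subgroup $\hatH \subset \hatG$, and the logarithmic monodromy corresponds to a unipotent element $u_{w_{K}} \in \hatG$ commuting with $\hatH$, lying in the unipotent class associated to $\underline{c}$ under Lusztig's bijection.

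The next step is to identify this cell-attached unipotent class with the class $v$ predicted by the conjecture. A case-by-case computation via the $a$-function should suffice: one checks that $a(w_{K}) = \ell(w_{K}) = \#\Phi_{K}^{+}$ coincides with $\dim \calB_{v} = \tfrac{1}{2}(\#\Phi_{G}/2 - r)$ in each of the cases $D_{2n}, E_{7}, E_{8}, G_{2}$. For instance, for $E_{8}$ we get $\ell(w_{K}) = \#\Phi_{D_{8}}^{+} = 56 = (120-8)/2 = \dim \calB_{4A_{1}}$, and similarly for the other types, matching the table. This uniquely determines the two-sided cell $\underline{c}$ and hence identifies $u_{w_{K}}$ with an element of $v$. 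A uniform (non-case-by-case) argument should also be available, exploiting the fact that $w_{K}$ is the unique element of the affine Weyl group sending $\tfrac{1}{2}\rho^{\vee}$ to $-\tfrac{1}{2}\rho^{\vee}$, together with the description of Duflo involutions in terms of left cells.

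The main obstacle is bridging between the monodromy on the honest sheaves $Z_{\infty}(\calK) * C_{\bP_{\infty}}$ and its image in the Serre quotient $\calP_{\underline{c}}$, since a priori the former may carry additional nilpotent contributions from two-sided cells $\underline{c}' < \underline{c}$. Bezrukavnikov's theorem controls only the quotient, so one can immediately deduce that $\rho_{k}(\zeta_{\infty})$ lies in the closure $\overline{v}$; the challenge is to rule out degenerations into smaller classes. To pin down the exact class, I would combine an upper bound coming from a refined G\"ortz--Haines weight estimate (as in the proof of Proposition \ref{p:monoinf}, but tracking all Jordan block sizes rather than just the top one) with a lower bound produced by Bezrukavnikov in the principal cell. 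The weight estimate shows that subquotients from $\calP_{<\underline{c}}$ can only support Jordan blocks of bounded size, while Bezrukavnikov supplies enough Jordan structure in $\calP_{\underline{c}}$ to account for the full class $v$.

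Putting Steps 1--3 together yields that, for every irreducible representation $V \in \Rep(\hatG)$, the Jordan type of $\log(\rho_{k}(\zeta_{\infty}))$ on $V$ coincides with that of an element of $v$, which by Tannakian reconstruction forces $\rho_{k}(\zeta_{\infty})$ itself into the class $v$. The hardest step is the lower-bound portion of Step 3: without an independent construction of the $\hatH$-action on the uncategorified nearby cycles $Z_{\infty}(-) * C_{\bP_{\infty}}$ lifting the one on $\calP_{\underline{c}}$, one is forced into delicate weight computations to exclude unwanted degenerations.
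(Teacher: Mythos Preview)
This statement is listed in the paper as a \emph{conjecture}, not a theorem; the paper offers no proof and instead only shows (in the Lemma following Conjecture~\ref{c:rigid}) that any two of Conjectures~\ref{c:0}, \ref{c:inf}, \ref{c:rigid} imply the third. So there is no ``paper's proof'' to compare against, and the question is whether your outline is a viable attack on an open problem.

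Your first two steps are reasonable. The numerical identification $a(w_K)=\ell(w_K)=\#\Phi_K^+=\dim\calB_v$ does check out type by type, and since Lusztig's bijection sends the cell $\underline{c}$ to the class with $\dim\calB=a(\underline{c})$, this pins down the unipotent class attached to $\underline{c}$ as $v$ (one should verify in each case that no other class shares this Springer fiber dimension, but for the small list $D_{2n},E_7,E_8,G_2$ this is a table lookup).

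The genuine gap is your Step~3, and you have not closed it. Bezrukavnikov's theorem identifies the monodromy only after passing to the Serre quotient $\calP_{\underline{c}}$. But what you actually need is the monodromy on $u_0^*\TT_\infty(Z_{\bP_\infty}(\calK),j_!\calF_\chi)$, and the functor $\TT_\infty(-,j_!\calF_\chi)$ is not the same as passage to $\calP_{\underline{c}}$: it may kill or retain subquotients from cells $<\underline{c}$ in a way you have not analyzed. Your proposed cure --- G\"ortz--Haines weight bounds for the upper bound --- is too coarse: in Proposition~\ref{p:monoinf} those bounds barely sufficed to exclude the single regular class, and they will not distinguish $v$ from the many intermediate classes between $v$ and regular. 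The structural approach that actually worked at the point $1$ (Proposition~\ref{p:mono1}) was a vanishing statement $\TT_1(\IC_{\tilw},\calF)=0$ for all $\tilw\neq 1$, proved via the geometry of $\Bun$. An analogous statement at $\infty$ --- that $\TT_\infty(\IC_{[\tilw]},j_!\calF_\chi)=0$ for double cosets $[\tilw]\in W_K\backslash\tilW/W_K$ outside a controlled set --- is what would be needed, and nothing in the paper or in your outline supplies it. Absent such a vanishing result, your lower bound from Bezrukavnikov does not transfer from $\calP_{\underline{c}}$ to $\calE$, and the argument does not close.
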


We also make the following conjecture on the local monodromy at $0$.
\begin{conj}\label{c:inf}
Under $\rho_{k}$, a topological generator of the tame inertia group $I_{0}^{\tame}$ gets mapped to an element conjugate to $\kappa\in\hatT[2]$ in Section \ref{sss:kappa}. In other words, the unipotent part of the local monodromy at $0$ is trivial.
\end{conj}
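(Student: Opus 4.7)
The plan is to strengthen the local analysis at $0$ already carried out in Proposition \ref{p:mono0}. That proposition identifies the semisimple part of the local monodromy as $\kappa$ via the Wakimoto filtration $F_\lambda$ on the monodromic nearby cycles $Z_0'(\calK)$, with $m(\calK)$ acting on $\Gr^F_\lambda Z_0'(\calK)$ by the scalar $\kappa(\lambda)\in\{\pm 1\}$. The residual unipotent part of $\rho_k(\zeta_0)$ is precisely the obstruction to splitting this filtration compatibly with $m(\calK)$. So Conjecture \ref{c:inf} reduces to showing that $m(\calK)$ acts semisimply on $Z_0'(\calK)$ for every $\calK\in\Sat$.

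First I would study extensions between Wakimoto sheaves $J_\lambda,J_\mu$ in the monodromic category $D^b(\wt{\bI_0}\backslash L_0G/\wt{\bI_0})_{\odd}$. An extension contributes to the unipotent part of $m(\calK)$ only when $\kappa(\lambda)=\kappa(\mu)$, so the entire obstruction lives in $\Ext^1$ within a single $\kappa$-eigenspace. Using Bezrukavnikov's realization of the (monodromic) affine Hecke category in terms of coherent sheaves on a Steinberg variety for the centralizer $\hat{K}=Z_{\hatG}(\kappa)$ (which is precisely the Langlands dual of the symmetric subgroup $K$), the relevant Ext groups should be governed by the dimensions of fixed loci on $\hat{K}$-Springer fibers. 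The expected output is that in the $\odd$-twisted, $\kappa$-fixed setting these Ext groups carrying non-trivial $m(\calK)$-components vanish, making the Wakimoto filtration monodromy-split.

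An independent motivic route uses Proposition \ref{p:motivic}: the stalk $\calE_{\chi,x}^{\qm}$ for $x$ near $0$ is, up to twist, the odd $\chi$-isotypic part of $\cohoc{2h^\vee-2}{\tilY_x,\Ql'}$. One extends $\tilY$ across $x=0$ and constructs a log-smooth model of the degeneration using the Heisenberg structure of $P$: the special fiber should decompose in terms of the subvariety of $G/P$ cut out by the relevant degenerate divisor, together with a canonical $\mu_2$-quotient coming from the double cover $\tilA\to A$. One then computes vanishing cycles and shows the nilpotent part of the monodromy on $\upH^{2h^\vee-2}_c(\tilY_x)_{\odd}(h^\vee-1)$ vanishes, using that the singularities of the special fiber over $0$ are modelled on quadratic cones (whose vanishing cycles contribute only $\kappa$-eigenspaces and no Jordan blocks of size $>1$).

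The hard part will be ruling out nilpotent extensions between Wakimoto sheaves with coinciding $\kappa$-weights: these extensions do exist abstractly in the affine Hecke category, and one must show that in the relevant monodromic and $\odd$-constrained setting they do not support any non-trivial component of $m(\calK)$. On the motivic side, the analogous obstacle is showing semistable-type reduction for $\tilY\to\X{}$ at $x=0$ with explicit enough control over the critical locus to rule out logarithmic contributions to the monodromy. Either way, the underlying principle is that the parahoric datum at $0$ --- an order-$2$ quadratic character of $K$ --- is "rigid" enough on the Langlands dual side to force tame monodromy of order exactly $2$; making this principle into a proof is ultimately a statement in the local Langlands correspondence for parahoric types, and its clearest formulation may require input beyond what is developed in the present paper.
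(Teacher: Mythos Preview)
The statement you are attempting to prove is labeled a \emph{conjecture} in the paper (Conjecture~\ref{c:inf}), and the paper does \emph{not} supply a proof of it. The only result in the paper in this direction is Proposition~\ref{p:mono0}, which pins down the semisimple part of the local monodromy at $0$ as $\kappa$ but says nothing about the unipotent part. The paper then states Conjectures~\ref{c:0}, \ref{c:inf}, and \ref{c:rigid} as open, and proves only the mutual implication that any two of the three imply the third. So there is no ``paper's own proof'' to compare your proposal against.

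As for the content of your proposal itself: it is an outline of two plausible strategies rather than a proof, and you yourself acknowledge this in the final paragraph. Both routes run into genuine obstacles that you have correctly identified but not resolved. On the categorical side, the Wakimoto filtration on $Z_0'(\calK)$ is generally \emph{not} monodromy-split in the affine Hecke category, and the claim that the odd twist kills the relevant $\Ext^1$ groups between Wakimoto sheaves with equal $\kappa$-weight is precisely the content of the conjecture, restated in different language; invoking a coherent realization on a Steinberg variety for $\hat K$ does not by itself produce the required vanishing. On the motivic side, exhibiting a semistable model of $\tilY\to\X{}$ at $0$ with only quadratic-cone singularities is a concrete geometric problem that would indeed settle the conjecture, but you have not carried out the construction, and there is no reason to expect it to be straightforward given the complexity of the divisor $D_x$. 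In short, your proposal correctly locates where the difficulty lies but does not overcome it, which is consistent with the paper leaving this as a conjecture.
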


Conjecture \ref{c:0} and Conjecture \ref{c:inf} would imply that in the case of $G_{2}$, our local system is isomorphic to the one constructed by Dettweiler and Reiter in \cite{DR}, because they checked there is up to isomorphism only one such local system with the same local monodromy as theirs.

\subsubsection{Rigidity} Assume $G$ is of type $E_{7}, E_{8}$ or $G_{2}$. Consider the adjoint local system $\Ad(\calE)$ associated to the adjoint representation of $\hatG$. Let $j_{!*}\Ad(\calE)$ be the middle extension of $\Ad(\calE)$ to $\PP^{1}$. Then we have an exact sequence (cf. \cite[proof of Proposition 5.3]{HNY})
\begin{equation}\label{ex}
0\to\hatg^{I_{0}}\oplus\hatg^{I_{1}}\oplus\hatg^{I_{\infty}}\to\cohoc{1}{\X{},\Ad(\calE)}\to\cohog{1}{\PP^{1},j_{!*}\Ad(\calE)}\to0.
\end{equation}

\begin{conj}\label{c:rigid}
The local system $\calE$ is {\em cohomologically rigid}, i.e., $\cohog{1}{\PP^{1},j_{!*}\Ad(\calE)}=0$.
\end{conj}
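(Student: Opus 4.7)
The plan is to reduce cohomological rigidity to a numerical identity on local monodromy dimensions, and then to pin down each local monodromy precisely by establishing Conjectures \ref{c:0} and \ref{c:inf}.

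First, I would exploit the fact (Theorem \ref{th:geommono}) that the geometric monodromy of $\calE$ is Zariski dense in $\hatG$ together with the simplicity of $\hatg$ (valid for $\hatG$ of type $E_{7},E_{8},G_{2}$) to conclude $\cohog{0}{\PP^{1}, j_{!*}\Ad(\calE)} = 0$, and then deduce $\cohog{2}{\PP^{1}, j_{!*}\Ad(\calE)} = 0$ by Poincar\'e duality applied to the self-dual middle extension $j_{!*}\Ad(\calE)$. Since $\calE$ is everywhere tame (Corollary \ref{c:tame} at $1,\infty$ and Proposition \ref{p:mono0} at $0$), the Grothendieck--Ogg--Shafarevich formula gives $\dim\cohoc{1}{\X{}, \Ad(\calE)} = -\chi_{c}(\X{})\cdot\dim\hatg = \dim\hatg$. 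Combined with the exact sequence \eqref{ex} this yields
$$\dim\cohog{1}{\PP^{1}, j_{!*}\Ad(\calE)} = \dim\hatg - \sum_{x\in\{0,1,\infty\}} \dim\hatg^{I_{x}},$$
so rigidity is equivalent to the numerical identity $\dim\hatg = \sum_{x}\dim\hatg^{I_{x}}$.

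Next, at $t=1$ Proposition \ref{p:mono1} gives regular unipotent monodromy, so $\dim\hatg^{I_{1}} = r := \rank(G)$. Conjecturally, at $t=0$ the monodromy is purely semisimple and conjugate to the split Cartan involution $\kappa$, hence $\dim\hatg^{I_{0}} = \dim\hatg^{\kappa} = \#\Phi_{G}/2$ by the characterization of split Cartan involutions (Section \ref{sss:Cartaninv}); and at $t=\infty$ the monodromy lies in Lusztig's unipotent class $v$ attached to $-1\in W$, so $\dim\hatg^{I_{\infty}} = \#\Phi_{G}/2$ by the defining property of $v$ recalled in Section \ref{sss:conjloc}. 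Summing, $r + \#\Phi_{G}/2 + \#\Phi_{G}/2 = r + \#\Phi_{G} = \dim\hatg$, as required. It therefore suffices to establish the two conjectural local monodromy statements.

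For Conjecture \ref{c:inf}, the strategy is to refine the analysis of the monodromic nearby cycles $Z'_{0}$ in the proof of Proposition \ref{p:mono0}. That proof already produces a Wakimoto filtration $F_{\lambda}$ on $Z'_{0}(\calK)$ whose associated graded receives the semisimple action of $\kappa$ via the characters $\lambda\mapsto\kappa(\lambda)\in\{\pm1\}$. To rule out a non-trivial unipotent part, I would invoke purity (Theorem \ref{th:eigen}(4)): since $\calE(\calK)$ is pure of weight $0$ and $\kappa$ acts on the Wakimoto graded pieces by roots of unity, the weight--monodromy principle should force the monodromy filtration to coincide with the weight filtration, whose associated graded is pure and hence semisimple, killing the nilpotent part. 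For Conjecture \ref{c:0}, I would apply the parahoric nearby cycles functor $Z_{\bP_{\infty}}$ and leverage Bezrukavnikov's identification (\cite[Theorem 2]{B}, as already employed in the proof of Proposition \ref{p:monoinf}) of the unipotent class attached to a two-sided cell under Lusztig's bijection. The parahoric $\bP_{\infty}$ comes from $-1\in W$ under the Gross--Reeder--Yu correspondence, and the longest element $w_{K}$ of $W_{K}$ is the distinguished involution of a two-sided cell $\unc$ whose Lusztig image is exactly the class $v$; one then tracks the cell-theoretic monodromy control from Bezrukavnikov's formalism to identify the unipotent class on the Galois side as $v$.

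The main obstacle is Conjecture \ref{c:inf}: the existing argument for Proposition \ref{p:mono0} identifies only the semisimple part of the local monodromy, and the weight-theoretic approach above requires a precise compatibility between the weight filtration on $Z'_{0}(\calK)$ and the Wakimoto filtration, a compatibility which does not seem to be recorded in the literature for the monodromic parahoric $\wt{\bI_{0}}$. An alternative would be to interpret the local condition at $0$ (the quadratic character $\mu$ on $\bP_{0}$) via local Langlands as forcing a purely semisimple tame local parameter, but importing such an input into the present geometric construction requires further work.
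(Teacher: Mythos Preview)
The statement you are attempting to prove is labeled a \emph{Conjecture} in the paper and is not proved there. The only result the paper establishes about it is the lemma immediately following it: any two of Conjectures \ref{c:0}, \ref{c:inf}, \ref{c:rigid} imply the third. Your reduction step (the numerical identity $\dim\hatg=\sum_x\dim\hatg^{I_x}$) is exactly this lemma, so the first half of your proposal matches the paper. The second half, where you try to prove Conjectures \ref{c:0} and \ref{c:inf}, goes beyond what the paper claims, and both arguments have genuine gaps.

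For Conjecture \ref{c:inf}, your weight--monodromy heuristic does not work. Purity of $\calE(\calK)$ on $\X{}$ says nothing about the unipotent part of the local monodromy at the puncture: a lisse sheaf pure of weight $0$ on the generic fiber can perfectly well have nontrivial unipotent inertia action (e.g.\ the Tate module of a semistable elliptic curve). What weight--monodromy gives you is that the monodromy filtration on the nearby cycle matches its weight filtration, but the latter is typically nontrivial precisely when the monodromy is non-semisimple. So the implication ``pure $\Rightarrow$ semisimple monodromy'' is false, and the argument collapses.

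For Conjecture \ref{c:0}, the Bezrukavnikov input you cite identifies the monodromy only on the cell subquotient $\calA_{w_K}$ of $\Perv(\bI_\infty\backslash L_\infty G/\bI_\infty)$: the element $u_{w_K}$ is the unipotent class attached to the cell $\unc$ of $w_K$ under Lusztig's bijection. This is exactly how the paper extracts an \emph{upper bound} in Proposition \ref{p:monoinf} (showing the monodromy is not regular). But the actual monodromy on $Z_\infty(\calK)$, and hence on $\calE(\calK)$, could a priori land in a larger unipotent class than $u_{w_K}$, since passage to the cell subquotient can only decrease nilpotency. You would also need to verify that Lusztig's bijection sends $\unc$ to the class $v$ of Section \ref{sss:conjloc}, which the paper does not assert. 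So this argument gives neither a lower bound matching $v$ nor the identification of the cell's class with $v$.
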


We make the following simple observation.
\begin{lemma} Any two of the three Conjectures \ref{c:0}, \ref{c:inf} and \ref{c:rigid} imply the other.
\end{lemma}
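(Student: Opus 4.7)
The plan is to distill a single linear identity among the three numbers $\dim\hatg^{I_{0}}$, $\dim\hatg^{I_{\infty}}$, and $\dim\cohog{1}{\PP^{1},j_{!*}\Ad(\calE)}$ from the exact sequence \eqref{ex}, and then reformulate each of the three conjectures as an equality fixing one of these numbers.

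First I would compute $\dim\cohoc{1}{\X{},\Ad(\calE)}$. By Corollary \ref{c:tame} (at $1$ and $\infty$) and Proposition \ref{p:mono0} (at $0$), the local system $\calE$ is tame at all three punctures, so all Swan conductors vanish and Grothendieck--Ogg--Shafarevich gives $\chi_{c}(\X{},\Ad(\calE)) = -\dim\hatg$. The group $\cohoc{0}{\X{},\Ad(\calE)}$ vanishes because $\X{}$ is affine connected, while $\cohoc{2}{\X{},\Ad(\calE)} = \hatg_{\pi_{1}(\X{})}$ vanishes because Theorem \ref{th:geommono} yields Zariski-dense image of $\pi_{1}(\X{})$ in the simple adjoint group $\hatG$ (we are in type $E_{7}, E_{8}$ or $G_{2}$) and the adjoint representation of $\hatG$ on $\hatg$ is irreducible and non-trivial. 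Thus $\dim\cohoc{1}{\X{},\Ad(\calE)} = \dim\hatg = \#\Phi_{G} + r$. Since Proposition \ref{p:mono1} gives $\dim\hatg^{I_{1}} = r$ (the dimension of the centralizer of a regular unipotent element), the exact sequence \eqref{ex} collapses to the key identity
\begin{equation*}
\dim\hatg^{I_{0}} + \dim\hatg^{I_{\infty}} + \dim\cohog{1}{\PP^{1}, j_{!*}\Ad(\calE)} = \#\Phi_{G}.
\end{equation*}

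Next I would match each conjecture with a dimensional equality. Conjecture \ref{c:rigid} is literally the vanishing of the last summand. For Conjecture \ref{c:inf}, Proposition \ref{p:mono0} writes the monodromy at $0$ as a product $g_{s}g_{u}$ of commuting semisimple and unipotent parts with $g_{s}$ conjugate to the split Cartan involution $\kappa$ of Section \ref{sss:kappa}; thus $\hatg^{I_{0}} = \hatg^{g_{s}} \cap \ker(\log g_{u}) \subseteq \hatg^{\kappa}$, and $\dim\hatg^{\kappa} = \#\Phi_{G}/2$ by Section \ref{sss:Cartaninv}. The equality $\dim\hatg^{I_{0}} = \#\Phi_{G}/2$ holds precisely when $g_{u}$ acts trivially on $\hatg^{\kappa} = \Lie Z_{\hatG}(\kappa)^{\circ}$; since this forces the unipotent $g_{u}$ to be central in the reductive group $Z_{\hatG}(\kappa)^{\circ}$ and hence trivial, this equality is equivalent to Conjecture \ref{c:inf}. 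For Conjecture \ref{c:0}, the implication $u_{\infty} \in v \Rightarrow \dim\hatg^{I_{\infty}} = \dim Z_{\hatG}(v) = \#\Phi_{G}/2$ is immediate; the converse is a case-by-case inspection of Bala--Carter tables showing that, for $G$ of type $E_{7}, E_{8}$ or $G_{2}$, the class $v$ is the unique non-trivial non-regular unipotent class of $\hatG$ with centralizer of dimension $\#\Phi_{G}/2$, which, combined with the non-triviality and non-regularity of $u_{\infty}$ from Proposition \ref{p:monoinf}, identifies Conjecture \ref{c:0} with $\dim\hatg^{I_{\infty}} = \#\Phi_{G}/2$.

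Once these reformulations are in place, the lemma is immediate from the key identity: the conjectural values $\#\Phi_{G}/2$, $\#\Phi_{G}/2$, $0$ of the three summands sum to exactly $\#\Phi_{G}$, so fixing any two of the three summands to its conjectural value forces the third to coincide with its conjectural value as well. The main obstacle is the converse direction in the dimensional reformulation of Conjecture \ref{c:0}, which rests on the uniqueness of a non-trivial non-regular unipotent class with prescribed centralizer dimension; this is a purely combinatorial statement that fortunately reduces to a direct tabulation for the three exceptional types under consideration.
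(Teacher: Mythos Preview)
Your proof is correct and follows essentially the same route as the paper's: derive the identity $\dim\hatg^{I_{0}}+\dim\hatg^{I_{\infty}}+\dim\cohog{1}{\PP^{1},j_{!*}\Ad(\calE)}=\#\Phi_{G}$ from \eqref{ex} and Grothendieck--Ogg--Shafarevich, then match each conjecture with its corresponding summand being $\#\Phi_{G}/2$, $\#\Phi_{G}/2$, or $0$, using Proposition~\ref{p:mono0} for the inclusion $\hatg^{I_{0}}\subset\hatg^{\kappa}$, Proposition~\ref{p:monoinf} plus the Bala--Carter tables for the uniqueness of $v$, and the split-Cartan-involution dimension count. Your explicit justification that $g_{u}$ trivial on $\hatg^{\kappa}$ forces $g_{u}=1$ (via centrality in the reductive centralizer) spells out a step the paper leaves implicit.
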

\begin{proof}
Since $\Ad(\calE)$ is tame and has no global sections, $\dim\cohoc{1}{\X{},\Ad(\calE)}=\dim\hatG$ by  the Grothendieck-Ogg-Shafarevich formula. Also note that $\hatg^{I_{1}}=\rk\hatG$ because a generator of $I^{\tame}_{1}$ maps to a regular unipotent element by Proposition \ref{p:mono1}. Therefore the exact sequence \eqref{ex} implies
\begin{equation*}
\dim\hatg^{I_{0}}+\dim\hatg^{I_{\infty}}+\dim\cohog{1}{\PP^{1},j_{!*}\Ad(\calE)}=\dim\hatG-\rk\hatG=\#\Phi_{G}.
\end{equation*}

Conjectures \ref{c:0} and \ref{c:inf} imply Conjecture \ref{c:rigid} because then $\hatg^{I_{\infty}}=\hatg^{v}$ has dimension $\#\Phi_{G}/2$ and $\hatg^{I_{0}}=\hatg^{\kappa}$ also has dimension $\#\Phi_{G}/2$ because $\kappa$ is a split Cartan involution (see Section \ref{sss:kappa}).

Conjectures \ref{c:inf} and \ref{c:rigid} imply Conjecture \ref{c:0} because then $\dim\hatg^{I_{0}}=\#\Phi_{G}-\dim\hatg^{v}=\#\Phi_{G}/2$ while $\hatg^{I_{0}}\subset\hatg^{\kappa}$ (which has dimension $\#\Phi_{G}/2$) by Proposition \ref{p:mono0}, which forces $\hatg^{I_{0}}=\hatg^{\kappa}$ and the unipotent part of the local monodromy at $0$ must be trivial.

Conjectures \ref{c:0} and \ref{c:rigid} imply Conjecture \ref{c:inf} because then $\dim\hatg^{I_{\infty}}=\#\Phi_{G}-\dim\hatg^{\kappa}=\#\Phi_{G}/2$ and $v$ is the only unipotent class with this property by checking tables in \cite{Carter}.
\end{proof}

\subsubsection{Global monodromy} Theorem \ref{th:geommono} does not completely determine the geometric monodromy group of $\calE$ in type $D_{2n}$. It would be interesting to determine them. Note that the pinned automorphism group $\Out(G)$ permutes the odd central characters $\tilA^{*}_{0,\odd}$ nontrivially. For those $\chi$ which are fixed by $\Out(G)$, we expect the geometric monodromy of $\calE_{\chi}$ to be $\hatG^{\Out(\hatG)}$, for the same reason as \cite[Section 6.1]{HNY}. How about those $\chi$ which are not fixed by $\Out(G)$? In the case $G$ is of type $D_{4}$, is the geometric monodromy $G_{2}$ or $\SO_{7}$ or $\PSO_{8}$?

\subsubsection{Rigid triples}
In inverse Galois theory, people use ``rigid triples'' in a finite group $\Gamma$ to construct \'etale $\Gamma$-coverings of $\X{\QQ}$, and hence getting $\Gamma$ as a Galois group over $\QQ$ using Hilbert irreducibility. For details see Serre's book \cite{SerreGal}.

Let $C_{1}$ be the regular unipotent class in $\hatG(\FF_{\ell})$. Let $C_{\infty}$ be the unipotent class of $v$ in $\hatG(\FF_{\ell})$ defined in Section \ref{sss:conjloc}. Let $C_{0}$ be the conjugacy class of the reduction of $\kappa$ in $\hatG(\FF_{\ell})$.

\begin{conj} Let $\hatG$ be of type $E_{8}$ or $G_{2}$. Then $(C_{0}, C_{1}, C_{\infty})$ form a strictly rigid triple in $\hatG(\FF_{\ell})$. In other words, the equation
\begin{equation}
g_{0}g_{1}g_{\infty}=1, g_{i}\in C_{i} \textup{ for } i=0,1,\infty
\end{equation}
has a unique solution up to conjugacy in $\hatG(\FF_{\ell})$, and any such solution $\{g_{0},g_{1},g_{\infty}\}$ generate $\hatG(\FF_{\ell})$.
\end{conj}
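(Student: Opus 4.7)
The plan is to deduce the conjecture from the geometry of the local system $\calE$, assuming Conjectures \ref{c:0} and \ref{c:inf} (which identify the local $\Qlbar$-conjugacy classes at $\infty$ and $0$) together with \ref{c:rigid} (cohomological rigidity). Together these conditions should force the mod-$\ell$ character variety of $\hatG$-local systems on $\X{}$ with prescribed local monodromy classes $(C_0, C_1, C_\infty)$ to have a unique $\FF_\ell$-point up to conjugation, and the solution produced by $\calE$ should be that point.

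First I would produce an explicit generating solution. By Proposition \ref{p:int}, $\calE$ extends to a $\hatG(\Ql)$-local system over $\X{\Z}$; selecting an integral $\hatG(\ZZ_\ell)$-model for $\ell$ large (enlarging $N$ if necessary, exactly as in the proof of Theorem \ref{th:invGal}) yields a mod-$\ell$ reduction $\overline{\calE}$ on $\X{\Z}$. Applying its monodromy to a standard triple of topological generators $(\gamma_0, \gamma_1, \gamma_\infty)$ of $\pi_1^{\topo}(\X{\CC})$ satisfying $\gamma_0 \gamma_1 \gamma_\infty = 1$ gives a triple $(g_0, g_1, g_\infty) \in C_0 \times C_1 \times C_\infty$ with trivial product; by the density argument in the proof of Theorem \ref{th:invGal} (via \cite{MVW}), this triple generates $\hatG(\FF_\ell)$ for all $\ell$ sufficiently large.

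Next I would establish uniqueness up to simultaneous conjugation. Let $\calM = \calM(C_0, C_1, C_\infty)$ denote the moduli stack of $\hatG$-local systems on $\X{}$ with the prescribed local monodromy classes, spread over $\Spec\ZZ[1/2N\ell]$. At an irreducible, Zariski-dense point $\calE'$, the tangent space to $\calM$ is $\cohog{1}{\PP^1, j_{!*}\Ad(\calE')}$, which vanishes by Conjecture \ref{c:rigid}; the obstruction space $\cohog{2}{\PP^1, j_{!*}\Ad(\calE')}$ is Poincar\'e-dual to $\cohog{0}{\PP^1, j_{!*}\Ad(\calE')}$ and vanishes since $\Ad(\calE')$ has no trivial subsheaf for Zariski-dense monodromy. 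Hence $\calM$ is \'etale over $\Spec\ZZ[1/2N\ell]$ at the point corresponding to $\calE$, so every $\FF_\ell$-point of $\calM$ near this reduction lifts uniquely to a $\Qlbar$-point. Because $C_1$ is the regular unipotent class, any solution triple generates a subgroup containing a principal $\PGL_2$; by Dynkin's classification (as in Theorem \ref{th:geommono}) its Zariski closure is either the principal $\PGL_2$ or all of $\hatG$, and the former is excluded by the non-regular local monodromy at $\infty$ (Proposition \ref{p:monoinf}), so every $\FF_\ell$-triple gives rise to a Zariski-dense irreducible $\Qlbar$-local system in $\calM$.

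The main obstacle will be establishing \emph{physical} rigidity of $\calE$ over $\Qlbar$: namely, that $\calE$ is the unique (up to conjugation) irreducible $\hatG(\Qlbar)$-local system on $\X{\Qbar}$ with local monodromies in $(C_0, C_1, C_\infty)$. Cohomological rigidity alone guarantees that $\calE$ is isolated in $\calM$, not that $\calM(\Qlbar)$ consists of a single orbit. For $\GL_n$ this would follow from Katz's middle convolution; for exceptional $\hatG$ one would need to adapt middle convolution to the $\hatG$-equivariant setting, or else use the motivic description of Proposition \ref{p:motivic} to match the Hodge-theoretic invariants of any competing local system against those of $\calE$. Once physical rigidity over $\Qlbar$ is established, the combination of \'etaleness of $\calM$, the generation statement from the first step, and the lifting $\FF_\ell \to \Qlbar$ immediately yields strict rigidity of $(C_0, C_1, C_\infty)$ in $\hatG(\FF_\ell)$ for all sufficiently large $\ell$.
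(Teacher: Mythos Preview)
The paper does not prove this statement: it is listed among the \emph{conjectural} properties of the local system in Section~\ref{ss:further}, with no argument offered. So there is no ``paper's own proof'' to compare against; your proposal is an outline toward a result the author leaves open.

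As an outline it is reasonable but, as you yourself note, incomplete. The production of one generating solution from the mod-$\ell$ reduction of $\calE$ is essentially the argument already carried out in Theorem~\ref{th:invGal}, and that step is fine. The difficulty is entirely in uniqueness. Two points deserve care beyond the physical-rigidity gap you flag. First, the conjecture asks for rigidity in the \emph{finite} group $\hatG(\FF_\ell)$: uniqueness up to $\hatG(\FF_\ell)$-conjugacy, not $\hatG(\overline{\FF}_\ell)$-conjugacy. Your deformation argument naturally controls the latter, so you would still need a descent step (e.g.\ via vanishing of $H^1$ of the centralizer, or a Lang--Steinberg argument) to pass from geometric to rational conjugacy. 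Second, your claim that every $\FF_\ell$-solution has Zariski-dense image uses Dynkin's list and Proposition~\ref{p:monoinf}, but Proposition~\ref{p:monoinf} concerns the specific local system $\calE$, not an arbitrary triple with $g_\infty\in C_\infty$; you should instead argue directly that the class $C_\infty$ (once identified via Conjecture~\ref{c:0} as the class $v$) is nontrivial and non-regular, which is immediate from the table preceding Conjecture~\ref{c:0}. Finally, \'etaleness of $\calM$ at the point $\calE$ does not by itself show that every $\FF_\ell$-point lifts: you need \'etaleness (or at least smoothness and zero-dimensionality) at \emph{every} $\FF_\ell$-point, which requires knowing Conjecture~\ref{c:rigid} for all putative local systems with these local monodromies, not just for $\calE$.
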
  

\noindent\textbf{Acknowledgement}
The author would like to thank B.Gross for many discussions and encouragement. In particular, the application to the inverse Galois problem was suggested by him. The author also thanks D.Gaitsgory, S.Junecue, N.Katz, G.Lusztig, B-C.Ng\^o, D.Vogan and L.Xiao for helpful discussions. The work is supported by the NSF grant DMS-0969470.

\end{document}